\documentclass[11pt, reqno]{amsart}
\usepackage{euscript,amscd,amsgen,amsfonts,amssymb,latexsym}

\newcommand{\eqdef}{\stackrel{\scriptscriptstyle\rm def}{=}}
\usepackage{bbm}
\usepackage{pgfplots, multicol}

\newtheorem{theorem}{Theorem}

\newtheorem{proposition}{Proposition}
\newtheorem{corollary}{Corollary}

\newtheorem{lemma}{Lemma}
\newtheorem{remark}{Remark}
\newtheorem{remarks}{Remarks}
\newtheorem{example}{Example}

\newcommand{\beha}{\begin{enumerate}}
\newcommand{\behe}{\end{enumerate}}
\renewcommand{\epsilon}{\varepsilon}

\newcommand{\R}{{\rm Rot}}

\newcommand{\Or}{\mathcal{O}}

\newcommand{\cM}{\EuScript{M}}

\newcommand{\cH}{\EuScript{H}}
\newcommand{\bR}{{\mathbb R}}

\newcommand{\bN}{{\mathbb N}}

\def\1{1\!\!1}

\def\and{\text{ and }}

                        \def\^{\tilde}

\def\Per{{\rm Per}}

\def\Fix{{\rm Fix}}
\def\Fix{{\rm Fix}}
\def\Fix{{\rm Fix}}
\def\Per{{\rm Per}}

\def\1{1\!\!1}

\def\rv{{\rm rv}}

\newtheorem*{thmA}{Theorem A}

\newtheorem*{thmB}{Theorem B}

\newtheorem*{thmC}{Theorem C}

\DeclareMathSymbol{\varnothing}{\mathord}{AMSb}{"3F}
\renewcommand{\emptyset}{\varnothing}
\title[Ground States and Rotation Sets]{Ground States and Zero-Temperature Measures at the Boundary of Rotation Sets}
\author{Tamara Kucherenko}\address{Department of Mathematics, The City College of New York, New York, NY, 10031, USA}\email{tkucherenko@ccny.cuny.edu}

\author{Christian Wolf}\address{Department of Mathematics, The City College of New York, New York, NY, 10031, USA}\email{cwolf@ccny.cuny.edu}

\thanks{This work was partially supported by grants from the PSC-CUNY (TRADA-46-282 to Tamara Kucherenko), (TRADA-45-356 to Christian Wolf) and by a grant from the Simons Foundation (\#209846 to Christian Wolf)}

\begin{document}

\begin{abstract}

We consider a continuous dynamical system $f:X\to X$ on a compact metric space $X$ equipped with an $m$-dimensional continuous
potential $\Phi=(\phi_1,\cdots,\phi_m):X\to \bR^m$. We study the set of ground states $ GS(\alpha)$ of the potential $\alpha\cdot \Phi$
as a function of the direction vector $\alpha\in S^{m-1}$. 
We show that the structure of the ground state sets is naturally related to the geometry of the generalized rotation set of $\Phi$.
In particular, for each $\alpha$ the set of rotation vectors of $ GS(\alpha)$ forms a non-empty, compact and connected subset of a face $F_\alpha(\Phi)$ of the rotation set associated with $\alpha$. Moreover, every ground state
maximizes entropy among all invariant measures with rotation vectors in $F_\alpha(\Phi)$. We further establish the occurrence of several quite unexpected phenomena. Namely, we construct for any $m\in\bN$ examples with an exposed boundary point (i.e. $F_\alpha(\Phi)$ being a singleton) without a unique ground state. Further, we establish the possibility of a line segment face $F_\alpha(\Phi)$ with a unique but non-ergodic ground state. Finally, we establish the possibility that the set of rotation vectors of
$GS(\alpha)$ is a non-trivial line segment.

\end{abstract}
\keywords{Ground states, zero temperature measures, generalized rotation sets, thermodynamic formalism, entropy, equilibrium states}
\subjclass[2010]{Primary 37D35, 37E45 Secondary 37B10, 37E45, 37L40}
\maketitle

\section{Introduction}
\subsection{Motivation}
It is a central problem in the thermodynamic formalism to describe the family of equilibrium states $\mu_t$ associated with a one-parameter family of observables $t\varphi$ of a given
potential $\varphi$ on the phase space. Here the parameter $t$ is viewed as the inverse temperature $1/T$  and consequently large values of $t$  correspond to an equilibrium $\mu_t$ at
temperature close to zero.  It is then a natural problem to classify the asymptotic behavior of such measures when temperature approaches zero and, in particular, to gather information about the  corresponding limit equilibria. These limits are of great interest since they are so-called ground states, i.e. states supported on configurations with minimal energy \cite{vEFS}.  If this limit exists (i.e. the ground state is unique) we call the corresponding limit a zero temperature measure.

In this paper we consider  deterministic discrete-time dynamical systems given by a continuous map $f:X\to X$ on a compact metric space $X$.    To capture information about the statistical properties of the dynamical system we denote by $\cM$ the set of all Borel invariant probability measures. We endow $\cM$ with the weak$^*$ topology which makes $\cM$ a compact convex topological space. Given a continuous potential $\varphi:X\to \bR$ we say $\mu\in \cM$ is an equilibrium state of the potential $\varphi$ if $\mu$ maximizes "free energy," that is
\begin{equation}\label{eq111}
h_\mu(f) +\int \varphi d\mu=\sup\left\{h_\nu(f) +\int \varphi d\nu: \nu\in \cM\right\},
\end{equation}
where $h_\nu(f)$ denotes the measure-theoretic entropy of $f$ with respect to $\nu$. We note that by the variational principle the supremum on the right-hand side of \eqref{eq111} coincides with the topological pressure of $\varphi$. To avoid making vacuous statements we shall always assume that the entropy map
$\mu\mapsto h_\mu(f)$ is upper semi-continuous which guarantees that the set of equilibrium states $ES(t\varphi)$ of the potential $t\varphi$ contains at least one invariant measure. We say a measure $\mu$ is a {\em ground state} of the potential $\varphi$ if $\mu$ is  the weak$^*$ limit of a sequence of equilibrium states $\mu_{t_n}\in ES(t_n\varphi)$  for some sequence $t_n\to \infty$. 
 It follows that every ground state $\mu$ of $\varphi$ is a maximizing measure, that is $\mu$ maximizes the integral $\int \varphi d\nu$ among all invariant probability measures (see \cite{Je2}).
In the presence of a unique ground state (i.e. the limit  $\lim_{t\to\infty}\mu_t$ exists) we call this limit the {\em zero-temperature measure} of the potential $\varphi$.

Ground states and zero-temperature measures play a fundamental role in statistical physics; yet their rigorous mathematical treatment
has just been developed during the last fifteen years.
The class of systems that have been intensively studied  are systems with strong thermodynamic properties which include subshifts of finite type and expanding maps and H\"older continuous potentials. In this setup
$t\varphi$ has a unique equilibrium state $\mu_t$ and $\mu_t$ is a Gibbs measure.    Contreras, Lopes and Thieullen \cite{CLT} established the existence of the zero-temperature limit for a generic set of H\"older potentials. Later, Bremont \cite{B} proved that for subshifts of finite type and potentials that are locally constant the zero-temperature limit exists. This result had been generalized by Leplaideur \cite{Le} who also provided a new proof of Bremont's theorem. Recently, Chazottes and Hochman constructed an example of a Lipschitz continuous potential on a one-sided shift space with two distinct ergodic ground states \cite{ChH}. One of the main open questions in this area is that given a particular class of systems, does there exist a generic set of potentials for which  the zero-temperature measure is supported on a periodic orbit. This question has been positively answered by Contreras \cite{C} in the case of expanding maps and Lipschitz continuous potentials by building up  on previous results of Moris \cite{Mor}. Finally we note that ground states and zero-temperature  measures have also been studied in the context of countable Markov shifts by  Kempton \cite{K} and
Jenkinson, Mauldin and Urbanski \cite{Je4}.

Our approach in this paper is slightly different. Namely, we consider an $m$-dimensional continuous potential $\Phi=(\phi_1,\cdots,\phi_m)$ and then study the ground states and zero-temperature limits of potentials that are linear combinations
\[
\alpha_1\cdot \phi_1+\cdots +\alpha_m\cdot \phi_m
\]
of the coordinate functions of $\Phi$. Since the corresponding ground states are accumulation points of $\mu_{t\alpha\cdot \Phi}$ it suffices to consider coefficients $(\alpha_1,...,\alpha_m)=\alpha$ on the unit sphere in $\bR^m$. We refer to these  $\alpha$ as direction vectors. We then study the set of  ground states
$GS(\alpha)$ of potentials $\alpha\cdot\Phi$ as a function of $\alpha$. We refer to $\mu\in GS(\alpha)$ as a ground state in the direction of $\alpha$.  Moreover, we are interested in their rotation vectors $\rv(\mu)=(\int \phi_1 d\mu,\cdots,\int \phi_m d\mu)$ which are particular points in the rotation set $\R(\Phi)=\rv(\cM)$.

It turns out that there is a natural connection
between the rotation vectors of the ground states and the geometry of the rotation set. Intuitively, the rotation vectors of the ground states should be located on the boundary of the corresponding rotation set.
This paper is motivated by a question of Artur Oscar Lopes who asked us if the rotation classes of vectors at the boundary of a rotation set necessarily contain a ground state.
We will now describe our results in more detail.

\subsection{Statement of the Results.} Let $f:X\to X$ be a continuous map on a compact metric space.
We assume   that $f$ has finite topological entropy and that the entropy map of $f$ is upper semi-continuous. The latter guarantees that every continuous potential $\varphi$ has at least one equilibrium state $\mu_\varphi$. Given an $m$-dimensional potential  $\Phi=(\phi_1,\cdots,\phi_m)\in C(X,\bR^m)$ we denote
by $\R(\Phi)=\{\rv(\mu): \mu\in \cM\}$ the (generalized) rotation set of $\Phi$. It follows from the definition that
the rotation set is a compact and convex subset of $\bR^m$. Rotation sets are natural generalizations
of Poincar\'e's rotation number of an orientation preserving homeomorphism on a circle that have been been recently studied by several authors (\cite{B}, \cite{GKLM}, \cite{GM}, \cite{Je}, \cite{KW1}, \cite{KW3}, \cite{MZ} and \cite{Z}).

Given a rotation set $\R(\Phi)$, there is a natural correspondence between a direction vector $\alpha$ and the associated  face $F_\alpha(\Phi)$ of $\R(\Phi)$ (see Section 3 for the precise definition). The following theorem establishes the connection
between the ground states of $\alpha\cdot \Phi$ and $\R(\Phi)$ (see Theorem \ref{thm1}  in the text).

\begin{thmA}\label{thmA}
Let $f$ be a continuous map on a compact metric space for which the entropy map is upper-semi continuous. Let $\Phi:X\to\bR^m$ be a continuous potential and let $\alpha$ be a direction vector. Then
\begin{enumerate}
\item[(a)]
If  $\mu$ is a ground state in the direction of $\alpha$ then $\rv(\mu)\in F_\alpha(\Phi)$ and $h_\mu(f)= \sup\left\{h_\nu(f): \rv(\nu)\in F_\alpha(\Phi)\right\}$.
\item[(b)]
The set $\{\rv(\mu): \mu \in GS(\alpha)\}$ is compact.  Further, if for large enough $t$ there exist unique equilibrium states $\mu_t$
of the potential $t\alpha\cdot \Phi$ and $t\mapsto \rv(\mu_t)$ is continuous then $\{\rv(\mu): \mu \in GS(\alpha)\}$ is connected.
\end{enumerate}

\end{thmA}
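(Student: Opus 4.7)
The plan for \textbf{(a)} splits into two steps. To see $\rv(\mu)\in F_\alpha(\Phi)$, I would start from the identity $P(t_n\alpha\cdot\Phi)=h_{\mu_{t_n}}+t_n\,\alpha\cdot\rv(\mu_{t_n})$ and divide by $t_n$. The entropies $h_{\mu_{t_n}}$ are uniformly bounded by the topological entropy $h_{\mathrm{top}}(f)<\infty$, while a short variational-principle computation gives $P(t\alpha\cdot\Phi)/t\to\max_{w\in\R(\Phi)}\alpha\cdot w$ as $t\to\infty$. Hence $\alpha\cdot\rv(\mu_{t_n})\to\max_{w\in\R(\Phi)}\alpha\cdot w$, and continuity of $\rv$ under weak$^*$ convergence yields $\alpha\cdot\rv(\mu)=\max_{w\in\R(\Phi)}\alpha\cdot w$, which is precisely the defining condition for $\rv(\mu)\in F_\alpha(\Phi)$.

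For the entropy identity, I would fix an arbitrary $\nu\in\cM$ with $\rv(\nu)\in F_\alpha(\Phi)$ and apply the variational inequality at each $\mu_{t_n}$:
\[
h_{\mu_{t_n}}+t_n\,\alpha\cdot\rv(\mu_{t_n})=P(t_n\alpha\cdot\Phi)\geq h_\nu+t_n\,\alpha\cdot\rv(\nu).
\]
Because $\rv(\nu)$ maximizes $\alpha\cdot(\cdot)$ on $\R(\Phi)$, we have $\alpha\cdot\rv(\nu)\geq\alpha\cdot\rv(\mu_{t_n})$, so the $t_n$-terms cancel with a favorable sign and $h_{\mu_{t_n}}\geq h_\nu$ for every $n$. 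Upper semi-continuity of the entropy map then gives $h_\mu\geq\limsup_n h_{\mu_{t_n}}\geq h_\nu$. Taking the supremum over such $\nu$ and recalling from the first step that $\mu$ is itself a valid test measure yields the claimed equality.

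For \textbf{(b)}, I would first show that $GS(\alpha)$ is weak$^*$ closed in $\cM$: if $\mu^{(k)}\in GS(\alpha)$ and $\mu^{(k)}\to\mu$, a diagonal extraction from the approximating sequences of equilibrium states produces $t_n\to\infty$ with $\mu_{t_n}\to\mu$, so $\mu\in GS(\alpha)$. Weak$^*$ compactness of $\cM$ then gives compactness of $GS(\alpha)$, and continuity of $\rv$ transfers this to its image. For the connectedness assertion under the extra hypotheses, the key observation is that $\{\rv(\mu):\mu\in GS(\alpha)\}$ coincides with the set of subsequential limits at infinity of the continuous curve $t\mapsto\rv(\mu_t)$: one inclusion is immediate, and the reverse uses weak$^*$ compactness of $\cM$ to upgrade a convergent subsequence $\rv(\mu_{t_n})\to v$ to a further subsequence along which $\mu_{t_n}$ converges to a ground state whose rotation vector is $v$. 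Connectedness then follows from the standard topological fact that the set of accumulation points at infinity of a continuous curve in $\bR^m$ is connected (split such a set into two disjoint non-empty closed pieces, surround them by disjoint open neighborhoods $U_1,U_2$ with disjoint closures, observe that the curve eventually lies in $U_1\cup U_2$, and use connectedness of a tail interval to force the curve into one neighborhood only, contradicting the existence of accumulation points in the other).

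The main subtlety I anticipate lies in the connectedness argument of (b): it genuinely relies on both uniqueness of $\mu_t$ for large $t$ and continuity of $t\mapsto\rv(\mu_t)$, without which the ground states need not arise as limits along a single continuous parametrization. The entropy identity in (a) is more routine but depends critically on the upper semi-continuity hypothesis, which is used exactly to pass the entropic lower bound $h_{\mu_{t_n}}\geq h_\nu$ through to the limit measure $\mu$.
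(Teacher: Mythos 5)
Your proof is correct, and in part (a) it takes a noticeably different (and leaner) route than the paper's. For the inclusion $\rv(\mu)\in F_\alpha(\Phi)$, the paper fixes an arbitrary open neighborhood $U$ of the face and shows by a free-energy comparison that for $t>\tfrac{2}{\varepsilon}h_{\rm top}(f)$ no measure with rotation vector outside $U$ can be an equilibrium state; your version divides the pressure identity by $t_n$ and uses $P_{\rm top}(t\alpha\cdot\Phi)/t\to\max_{w\in\R(\Phi)}\alpha\cdot w$ (valid because $0\le h_\nu\le h_{\rm top}(f)<\infty$). Both exploit the same mechanism — the linear term dominates the entropy term — but the paper's quantitative form is reused later in its proof of (b), whereas yours is shorter. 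The bigger divergence is in the entropy identity: the paper introduces the auxiliary hyperplanes $H(t_n)$ through $\rv(\mu_{t_n})$, proves $h_{\mu_{t_n}}\ge\cH(w)$ only for $w\in H(t_n)\cap\R(\Phi)$ (where the integrals of $t_n\alpha\cdot\Phi$ coincide exactly), and then needs the continuity of the localized entropy $w\mapsto\cH(w)$ (imported from \cite{KW1}) together with approximating points $w_n\to w$ along a transversal segment. Your observation that the variational inequality already gives $h_{\mu_{t_n}}\ge h_\nu$ for \emph{every} $\nu$ with $\rv(\nu)\in F_\alpha(\Phi)$ — because the discarded term $t_n\bigl(\alpha\cdot\rv(\nu)-\alpha\cdot\rv(\mu_{t_n})\bigr)$ is nonnegative when $\rv(\nu)$ maximizes $\alpha\cdot(\cdot)$ on $\R(\Phi)$ — makes all of that machinery unnecessary and is a genuine simplification. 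For (b), your argument is essentially the paper's in cleaner clothing: the paper runs the "omega-limit set of a bounded continuous curve is connected" argument by hand in adapted coordinates ($H_\alpha(\Phi)=\bR^{m-1}\times\{0\}$), extracting times $t_\epsilon$ where the curve leaves the disconnection; you isolate the identification of $\rv(GS(\alpha))$ with the accumulation set of $t\mapsto\rv(\mu_t)$ and then quote the general topological fact, whose proof sketch you give correctly (the step "the curve eventually lies in $U_1\cup U_2$" uses that the curve stays in the compact set $\R(\Phi)$, which is worth saying explicitly).
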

Theorem A connects the rotation vectors of the ground states of $\alpha\cdot \Phi$  to the geometry of the boundary of the rotation set $\R(\Phi)$. This is particularly useful for systems and potentials where we have a good understanding of the corresponding rotation sets. We note that in general the geometry of rotation sets is quite complicated. Indeed, we recently proved in \cite{KW1} that for subshifts of finite type every compact and convex set $K\subset\bR^m$ is attained as the rotation set of some $m$-dimensional potential $\Phi$. On the other hand, if $f$ is a subshift of finite type and $\Phi$ is locally constant, then by Ziemian's theorem \cite{Z} the rotation set is a polyhedron and hence its boundary is the union of finitely many faces. Moreover, by Bremont's theorem the zero-temperature limit exists for all direction vectors $\alpha$. Therefore, it follows from part (a) of Theorem A that for subshifts of finite type and locally constant potentials  the set of boundary points with a ground state in its rotation class has dimension at most $m-2$. In particular, in $\bR^2$  this set is finite.

Obviously, the statement that    $\{\rv(\mu): \mu \in GS(\alpha)\}$ is connected is only of relevance in situations where $\rv(GS(\alpha))$ contains more than one rotation vector. For example, it is known (see \cite{CLT} and \cite{Je2}) that there exists a generic subset $S$ of the Banach space of H\"older continuous potentials such that if $\alpha\cdot \Phi\in S$ then there exists a unique ground state and in particular   $\{\rv(\mu): \mu \in GS(\alpha)\}$ is a singleton. As a counter part to this result we construct in Theorem \ref{thm5} an example for which  $\{\rv(\mu): \mu \in GS(\alpha)\}$ is a non-trivial line segment.

Next, we discuss different classes of boundary points. Recall that $w\in \partial \R(\Phi)$ is an exposed point if $\{w\}$ is a face of $\R(\Phi)$.
Moreover, we say the face $\{w\}$ is smooth if it corresponds to a unique direction vector $\alpha$ (see Section 2.4 for the exact definitions). It is an immediate consequence of Theorem A
that the rotation class of an exposed point $w$ contains at least one ground state $\mu$ (that is $\rv(\mu)=w$) and that $\mu$ maximizes entropy within this rotation class.
One might suspect that in the case of a smooth exposed point $w$ the zero-temperature limit necessarily exists. However, the following theorem shows that this is not true independently of the dimension $m$ (see Theorem \ref{thm3} and Corollary \ref{cor22} in the text).

\begin{thmB}\label{thmB}
Let $f:X\to X$ be the one-sided full shift map. Then for every $m\in \bN$ there exists a continuous potential $\Phi:X\to\bR^m$ such that  $\R(\Phi)$ has a smooth exposed point with direction vector $\alpha$ and $GS(\alpha)$ contains two distinct ergodic ground states in the direction of $\alpha$.
\end{thmB}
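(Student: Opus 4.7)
My plan is to bootstrap the one-dimensional example of Chazottes--Hochman \cite{ChH} to arbitrary $m\ge 2$ via a direct-product construction. Start from the Chazottes--Hochman Lipschitz potential $\phi_0$ on the full shift $X_0=\{0,1\}^\bN$: after normalization one may assume $\phi_0\le 0$, $\sup_\mu\int\phi_0\,d\mu=0$, and that $\phi_0$ has two distinct ergodic ground states $\mu_1,\mu_2$ with $\int\phi_0\,d\mu_i=0$. Since $\phi_0$ is Lipschitz, the equilibrium state $\mu_t^{(0)}$ of $t\phi_0$ is unique for every $t$, and $\mu_1,\mu_2$ appear among its weak$^\ast$ subsequential limits as $t\to\infty$. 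For $m=1$ the direction $\alpha=+1\in S^0$ exposes the singleton $\{0\}\subset \R(\phi_0)$ (trivially smooth since $|S^0|=2$), and both $\mu_i$ lie in $GS(1)$; this settles the case $m=1$.

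For $m\ge 2$, introduce an auxiliary full shift $Y$ (so that $X:=X_0\times Y$ remains a full shift on an enlarged alphabet) and a H\"older potential $\eta:Y\to\bR^m$ engineered so that:
\begin{enumerate}
\item[(E1)] $\R(\eta)$ has $e_1=(1,0,\dots,0)$ as a smooth exposed point with exposing direction $e_1$;
\item[(E2)] for all large $t$ the equilibrium state $\nu_t$ of $t\eta_1$ is unique, and $\nu_t$ converges weak$^\ast$ to a weakly mixing limit $\nu^\ast$.
\end{enumerate}
One such $\eta$ is obtained by combining the rotation-set realization of \cite{KW1} (which provides a H\"older $\eta$ with $\R(\eta)$ any prescribed strictly convex body, giving (E1)) with a careful choice of the potential near its maximum so that $\nu^\ast$ is the measure of maximal entropy on a topologically mixing subshift inside $Y$, hence weakly mixing.

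Define $\Phi:X\to\bR^m$ by $\Phi(x,y)=\phi_0(x)\,e_1+\eta(y)$. Because $\int\Phi\,d\mu$ depends on $\mu$ only through its marginals $\pi_1\mu,\pi_2\mu$, one obtains
\[
\R(\Phi)=[a,0]\,e_1+\R(\eta)\qquad(\text{Minkowski sum}),
\]
where $[a,0]=\R(\phi_0)$. A short convex-geometry check using (E1) then shows that $e_1\in\bR^m$ is a smooth exposed point of $\R(\Phi)$ with direction $e_1$: for $\alpha_1>0$, maximizing $\alpha\cdot v$ over $\R(\Phi)$ forces the $[a,0]e_1$-component to vanish and reduces to maximizing $\alpha\cdot v$ over $\R(\eta)$, whose maximizer equals $\{e_1\}$ iff $\alpha=e_1$; and for $\alpha_1\le 0$ the resulting face sits at $t=a<0$ and avoids $e_1$. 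Since $\alpha\cdot\Phi=\Phi_1=\phi_0\circ\pi_1+\eta_1\circ\pi_2$ splits across the two factors of the direct product, the entropy subadditivity $h_\mu(f\times g)\le h_{\pi_1\mu}(f)+h_{\pi_2\mu}(g)$ (equality on product measures) together with uniqueness of the factor equilibria forces the unique equilibrium state of $t\Phi_1$ on $X$ to be $\mu_t^{(0)}\otimes\nu_t$ for large $t$. Letting $t\to\infty$ and combining the two-limit behaviour of $\mu_t^{(0)}$ with $\nu_t\to\nu^\ast$ yields $\mu_1\otimes\nu^\ast,\mu_2\otimes\nu^\ast\in GS(e_1)$, distinct (as $\mu_1\neq\mu_2$) and ergodic under $f\times g$ because $\nu^\ast$ is weakly mixing.

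The principal obstacle is arranging (E2) simultaneously with (E1): zero-temperature limits of generic H\"older potentials tend to concentrate on periodic orbits (which are not weakly mixing), so the potential producing the smooth-exposed-point geometry must be chosen \emph{non-generically} in order to secure weak mixing of $\nu^\ast$. I handle this by placing $\nu^\ast$ on a topologically mixing subshift within $Y$, for which the natural equilibrium limit is a Parry-type Markov measure. Once (E1) and (E2) are in hand, the remaining ingredients (Minkowski-sum structure of $\R(\Phi)$, product form of split-potential equilibria on a direct product, and ergodicity of a product with a weakly mixing factor) are all standard.
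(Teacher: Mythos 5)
Your overall architecture (product with an auxiliary factor, Minkowski-sum description of $\R(\Phi)$, product form of the equilibrium states of a split potential, ergodicity via a weakly mixing factor) is coherent and genuinely different from the paper's route, which never leaves the original shift: the paper keeps $\phi_1=\varphi_Y$ and appends coordinates $\phi_2,\dots,\phi_m$ vanishing on $Y$ and taking alternating values $(-1)^ny_n$ on pieces of periodic orbits accumulating on $Y$, so that $\alpha\cdot\Phi$ for $\alpha=(-1,0,\dots,0)$ \emph{is} the one-dimensional Chazottes--Hochman potential and its two ergodic ground states are inherited verbatim, with no need for weak mixing or products. The decisive problem with your version is the construction of $\eta$ satisfying (E1) and (E2) simultaneously, which you defer to a citation of the realization theorem of \cite{KW1} plus ``a careful choice of the potential near its maximum.'' That realization theorem produces \emph{some} potential with a prescribed strictly convex rotation set; it gives no control over which measures sit over the exposed point $e_1$, none over the zero-temperature limits of $t\eta_1$, and it does not allow you to simultaneously impose $\eta_1=1-\dist(\cdot,W)$ for a prescribed mixing subshift $W$. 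But (E2) forces the maximizing set of $\eta_1$ to be exactly such a $W$ (so that $\nu^\ast$ is its Parry measure), and then the smoothness in (E1) must be arranged by a delicate choice of $\eta_2,\dots,\eta_m$ \emph{relative to that fixed} $\eta_1$ — ruling out every other supporting hyperplane at $e_1$. This is precisely the technical heart of the paper's proof (the alternating-sign construction and the estimates \eqref{infty}--\eqref{-infty} showing any other candidate hyperplane cuts the rotation set), and your proposal replaces it with an unproven claim. Without it, the theorem is not established.

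Two smaller points. First, entropy subadditivity plus uniqueness of the factor equilibria shows that $\mu_t^{(0)}\otimes\nu_t$ \emph{is} an equilibrium state of $t\Phi_1$ and that any equilibrium state is a relative-entropy-maximal joining of the factor equilibria; it does not by itself exclude non-product joinings. You should instead invoke uniqueness of equilibrium states for Lipschitz potentials on the product full shift and then identify the unique one with the product. Second, in the convex-geometry check the case $\alpha_1=0$ is not ``a face sitting at $t=a$'': there the face of $[a,0]e_1$ is the entire segment, and you must use $\max\{v_1:v\in\R(\eta)\}=1$ to force $s=0$ and reduce to smoothness of $e_1$ in $\R(\eta)$. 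Both are repairable; the missing construction of $\eta$ is not, as written.
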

The construction of the potential $\Phi$ in Theorem B uses the result of Chazottes and Hochman \cite{ChH} about the existence of a one-dimensional potential with two distinct ergodic ground states. Our contribution to the proof of Theorem B is to extend this potential to a $m$-dimensional potential with the desired properties. Notice  that our construction works for rather general classes of  systems and also does not use the  specific form of the one-dimensional potential in \cite{ChH}. Therefore, our approach could be  applied to extend other one-dimensional phenomena to higher dimensions.

Finally, we consider non-exposed boundary points. In the case when $X$ is a one-sided shift map  we construct
a potential $\Phi:X\to \bR^2$ for which we are able to completely control the shape of the rotation set $\R(\Phi)$ (see Example 1). In particular, we can assure (see Proposition \ref{prop1}) that $\partial \R(\Phi)$ is an infinite polygon for which we are able to compute the coordinates of the vertices. Furthermore, we obtain the following (see Proposition \ref{symmetric}).

\begin{thmC}
Let $f:X\to X$ be a one-sided full shift over an alphabet with 4 symbols. Then there exists $\Phi\in C(X,\bR^2)$ such that $\R(\Phi)$ has a line segment face $F_\alpha(\Phi)$ with the following properties:
\begin{itemize}
\item $\partial \R(\Phi)$ is an infinite polygon;
\item  $GS(\alpha)=\{\mu_\alpha\}$, $\mu_\alpha$ is non-ergodic and $\rv(\mu_\alpha)\in {\rm int} F_\alpha(\Phi)$;
\item $\rv(GS(\alpha'))\cap F_\alpha(\Phi)=\emptyset$ for all direction vectors $\alpha'\not=\alpha$.
\end{itemize}
\end{thmC}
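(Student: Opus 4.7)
The plan is to symmetrize Example~1's infinite polygon construction via a $\bZ/2$-symmetry on the 4-symbol alphabet, forcing a top edge $F_\alpha$ of $\R(\Phi)$ to be reflected about its midpoint and the zero-temperature limit in direction $\alpha$ to coincide with a non-ergodic, $\tau$-invariant measure whose rotation vector lies at that midpoint.

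\textbf{Construction.} Let $\tau:X\to X$ be the shift-commuting involution induced by the symbol swap $0\leftrightarrow 2$, $1\leftrightarrow 3$. Starting from the two-symbol potential produced by Proposition~\ref{prop1}, transfer it to each of the disjoint invariant subshifts $Y=\{0,1\}^{\bN}$ and $Z=\{2,3\}^{\bN}$ and assemble the pieces into $\Phi=(\phi_1,\phi_2)\in C(X,\bR^2)$ with $\phi_2\circ\tau=\phi_2$ and $\phi_1\circ\tau=-\phi_1$. Then $\R(\Phi)$ is symmetric under $(x,y)\mapsto(-x,y)$ and, by the flexibility of Proposition~\ref{prop1}, can be arranged to be an infinite polygon having a horizontal top edge $F_\alpha$ (for $\alpha=(0,1)$) whose endpoints $w_1=-w_2$ are extreme but non-exposed, with the remaining vertices of the polygon accumulating at $w_1,w_2$ along tangents tending to horizontal.

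\textbf{Unique non-ergodic ground state.} Since $t\alpha\cdot\Phi=t\phi_2$ is $\tau$-invariant, every equilibrium state $\mu_t\in ES(t\phi_2)$ symmetrizes to a $\tau$-invariant equilibrium state $\tilde\mu_t=\tfrac12(\mu_t+\tau_*\mu_t)$, and by an appropriate choice of the Example~1 data the large-$t$ equilibrium state is unique and hence automatically $\tau$-invariant. By Theorem~A(a), every weak-$*$ accumulation point $\nu$ of $(\mu_{t_n})$ as $t_n\to\infty$ lies in $F_\alpha$, is $\tau$-invariant, and maximizes entropy over all invariant measures whose rotation vector lies in $F_\alpha$. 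Tuning the construction so that there is a unique such maximizer---a specific non-ergodic $\tau$-invariant measure $\mu_\alpha$ whose rotation vector is the midpoint of $F_\alpha$---yields $GS(\alpha)=\{\mu_\alpha\}$, establishing the second bullet.

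\textbf{Disjointness and main obstacle.} The third bullet follows at once from the non-exposure of $w_1,w_2$: the normal cone of $\R(\Phi)$ at each endpoint is the single ray through $\alpha$, so for every direction vector $\alpha'\neq\alpha$ the face $F_{\alpha'}$ is disjoint from $F_\alpha$, and Theorem~A(a) forces $\rv(GS(\alpha'))\subseteq F_{\alpha'}$, disjoint from $F_\alpha$. The main technical obstacle is the fine-tuning of the Example~1 potential on the 4-symbol shift to deliver simultaneously (i) the polygon geometry with tangential accumulation of vertices at $w_1, w_2$, (ii) uniqueness of the large-$t$ equilibrium states (for which one wants enough regularity of $\phi_2$ on each subshift), and (iii) uniqueness of the entropy maximizer on $F_\alpha$ at the midpoint. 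The last is the most delicate ingredient, because entropy is affine on convex combinations of ergodic measures supported on disjoint subshifts; strict concavity of the entropy profile on $F_\alpha$ therefore forces the presence of additional invariant measures at interior rotation vectors whose entropy strictly exceeds the linear interpolation between the two endpoint ergodic measures, and it is precisely this interior-entropy surplus, together with the $\tau$-symmetry, that pins the maximizer down to the single non-ergodic measure $\mu_\alpha$.
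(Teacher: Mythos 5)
Your overall strategy is the paper's: the run-length potential of Example \ref{ex1} on the $4$-symbol shift split into $S_1=\{0,1\}$ and $S_2=\{2,3\}$, the infinite-polygon geometry of Proposition \ref{prop1}, the sub-alphabet--swapping involution, uniqueness of the large-$t$ equilibrium states forcing them to be symmetric, Theorem \ref{thm1}(a) for entropy maximization on the face, and tangential accumulation of the vertices at the endpoints of $F_\alpha(\Phi)$ to make those endpoints smooth non-exposed points (which gives the third bullet). Up to swapping the roles of the two coordinates this is exactly Propositions \ref{symmetric} and \ref{extreme_pts} combined with the ergodic-decomposition argument of Proposition \ref{non-ergodic}.

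However, the closing claim of your last paragraph is wrong, and if the proof actually needed it, it would fail. In this construction the symmetric coordinate of $\Phi$ (the paper's $\phi_1$) is nonnegative and vanishes exactly on the union of the two disjoint full $2$-shifts $Y_1(\infty)\cup Y_2(\infty)$, so the rotation class of $F_\alpha(\Phi)$ consists precisely of the convex combinations $s\mu+(1-s)\nu$ with $\mu$ supported on $Y_1(\infty)$ and $\nu$ on $Y_2(\infty)$. Hence $\cH$ is \emph{constant} equal to $\log 2$ on the face: there are no ``additional invariant measures at interior rotation vectors,'' no strict concavity, and no interior-entropy surplus---every $s\mu_1+(1-s)\mu_2$ is an entropy maximizer for $\cM_\Phi(F_\alpha(\Phi))$, so ``tuning the construction so that there is a unique such maximizer'' over the whole face is impossible. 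What saves you is the part of the argument you should keep and make primary: each large-$t$ equilibrium state is $\tau$-invariant by uniqueness, so the anti-symmetric coordinate of $\rv(\mu_t)$ vanishes, the limiting rotation vector is the midpoint of $F_\alpha(\Phi)$, and among the maximizers $s\mu_1+(1-s)\mu_2$ only $s=\tfrac12$ is $\tau$-invariant (since $\tau_*\mu_1=\mu_2$) and only $s=\tfrac12$ has that rotation vector. Uniqueness and non-ergodicity of $\mu_\alpha=\tfrac12\mu_1+\tfrac12\mu_2$ therefore come entirely from the symmetry together with the degenerate (affine) entropy profile on the face, not from any concavity of $\cH$.
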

By slightly modifying the construction in Theorem C we can collapse the endpoints of the face $F_\alpha(\Phi)$ into one single point $w$. This point $w$ then becomes a smooth exposed boundary point that does not contain a non-ergodic ground state in its rotation class (see Proposition \ref{non-ergodic}).

This paper is organized as follows. In Section 2 we review some
basic concepts and results about ground states, zero-temperature measures and  the thermodynamic formalism.
Section 3 is devoted to the proof of    Theorem A.
 In Section 4 we discuss ground states that belong to the rotation classes of exposed boundary points; in particular we present the proof of Theorem B.  Finally,
in Section 5 we construct an example of a rotation set whose boundary is an infinite polygon and show that this example displays several possibilities for ground states on line segments.

\section{Preliminaries}\label{sec:2}
In this section we discuss relevant background material which will be used later on. We will continue to use the notations from Section 1.
Let $f:X\to X$ be a continuous map on a compact metric space $(X,d)$. Let $\cM$ denote the space of all $f$-invariant Borel probability measures on $X$ endowed with the weak$^\ast$ topology. This makes $\cM$ a compact convex space. Moreover, let $\cM_E\subset \cM$ be the subset of ergodic measures.
In this paper we use as a standing assumption
that $f$ has finite topological entropy and
that the entropy map
$\mu\mapsto h_{\mu}(f)$
is upper semi-continuous on $\cM$. Here $h_{\mu}(f)$ denotes the measure-theoretic entropy of $f$ with respect to $\mu$ (see \cite{Wal:81} for the definition and details).

\subsection{Thermodynamic formalism. }

Given a continuous one-dimensional potential $\varphi:X\to \bR $ we denote the topological pressure of $\varphi$ (with respect to $f$) by $P_{\rm top}(\varphi)$ and the topological entropy of $f$ by $h_{\rm top}(f)$ (see~\cite{Wal:81} for the definition and further details). The topological pressure satisfies the well-known variational principle, namely,

\begin{equation}\label{eqvarpri}
P_{\rm top}(\varphi)=
\sup_{\mu\in \cM} \left(h_\mu(f)+\int_X \varphi\,d\mu\right).
\end{equation}
A measure
$\mu\in \cM$ that attains the supremum in \eqref{eqvarpri} is
called an equilibrium state (or equilibrium measure) of the potential $\varphi$. We denote by $ES(\varphi)$ the set of all equilibrium states of $\varphi$. Note that $ES(\varphi)$ is a compact and convex subset of $\cM$.
Moreover, our standing assumption that the entropy map
is upper semi-continuous implies that $ES(\varphi)\not=\emptyset$, in particular $ES(\varphi)$ contains at least one ergodic equilibrium state.

\subsection{Ground states and zero-temperature measures. }

Next we give the definition for ground states as well as for zero-temperature measures.

Let $\varphi\in C(X,\bR)$ be a continuous potential. We say $\mu\in \cM$ is a ground state of the
potential $\varphi$ if there exists an increasing sequence $(t_n)_n\subset \bR$ with $t_n\to \infty$, and a corresponding sequence of equilibrium states  $(\mu_{t_n})_n$  such that $\mu_{t_n}\in ES(t_n\varphi)$ and $\mu_{t_n}\to \mu$ as $n\to\infty$. Here we think of $t$ as the inverse temperature and $T=1/t$ as the temperature of the system. This means that the measure $\mu$ is an accumulation point of equilibrium states when the temperature approaches zero.
We denote by $GS(\varphi)$ the set of all ground states of $\varphi$.

In order to define zero-temperature measures we  require convergence of the measures $\mu_{t\varphi}$ rather than only convergence of a subsequence.
Namely, suppose there exists $t_0\geq 0$ such that  for all $t\geq t_0$ the potential $t\varphi$ has a unique equilibrium state $\mu_{t\varphi}$ (i.e. $ES(t\varphi)$ is a singleton). We say  $\mu\in \cM$ is a zero-temperature measure of the potential $\varphi$ if $\mu$ is the weak$^\ast$  limit of the measures $\mu_{t\varphi}$.
Uniqueness of the equilibrium states is known to hold for certain classes of systems and potentials including Axiom A systems,  subshifts of finite type and expansive homeomorphisms with specification and H\"older continuous potentials.
Recently, there has been significant progress in generalizing  uniqueness result for equilibrium states to wider classes of shift transformations, non-uniformly hyperbolic maps and flows (e.g. \cite{CT1}, \cite{CT2}, \cite{CT3} and \cite{CFT}).  We refer to the overview article \cite{CP} for further references and details.

Notice that the fact that each $t\varphi$ has a unique equilibrium state does  in general not guarantee the existence of a zero-temperature measure. This is shown by Chazottes and  Hochman in \cite{ChH}. They construct a subshift of finite type and a
Lipschitz continuous potential $\varphi$ such that $GS(\varphi)$ contains two distinct ergodic measures. On the other hand, results about the convergence of the sequence $\mu_{t\varphi}$ are known only for special classes of system (subshifts of finite type and expanding systems) and potentials (see \cite{Br}, \cite{C}, \cite{Le}). We consider in this paper general classes of systems and in particular only require upper-semi continuity of the entropy map with the main focus on ground states.

\subsection{Rotation sets and entropy}
We now introduce generalized rotation sets and entropy of rotation vectors. We refer to \cite{KW1,Je} and the references therein for details and further accounts.
For $\Phi=(\phi_1,\cdots,\phi_m)\in C(X,\bR^m)$
we define the {\it rotation set} of  $\Phi$ and $f$  by
\begin{equation} \label{defrotset}
 \R(\Phi,f)= \left\{\rv(\mu): \mu\in\cM\right\},
\end{equation}
where $\rv(\mu)=\left(\int \phi_1\ d\mu,\ldots,\int \phi_m\ d\mu\right)$
denotes the rotation vector of the measure $\mu$.  Since we will always work with a fixed dynamical system $(X,f)$ we frequently omit the dependence on $f$ in the notation of the rotation set and write $\R(\Phi)$ instead of $\R(\Phi,f)$.
For $w\in \R(\Phi)$ we call $\cM_\Phi(w)=\{\mu\in \cM: \rv(\mu)=w\}$ the rotation class of $w$. Similarly, for a subset $F\subset\R(\Phi)$ we call $\cM_\Phi(F)=\{\mu\in \cM: \rv(\mu)\in F\}$ the rotation class of $F$.

Next we define the entropy of rotation vectors.
Following \cite{Je, KW1} we define the entropy of $w\in \R(\Phi)$ by
\begin{equation}\label{defH}
\cH(w)\eqdef\sup\{h_\mu(f): \mu \in \cM\ {\rm and }\ \rv(\mu)=w\}.
\end{equation}
The number $\cH(w)$ is also called localized entropy of $w$ (see \cite{KW2}). It follows from the upper-semi continuity of $\mu\mapsto h_\mu(f)$ that the supremum in \eqref{defH} is attained by at least one invariant measure and we call such a measure $\mu$ a localized measure of maximal entropy at $w$. Further, the map $w\mapsto \cH(w)$ is continuous on $\R(\Phi)$, see \cite{KW1}.

\subsection{Notions from convex geometry. }

Next we recall  some notions from convex geometry (see e.g. \cite[Ch. 2]{Gr}). For $m$-dimensional vectors $u=(u_1,...,u_m)$ and $v=(v_1,...,v_m)$ we write $u\cdot v =u_1v_1+...+u_mv_m$ for the inner product of $u$ and $v$. We also write ${\rm pr}_i(v)$ for the $i$-${\rm th}$  coordinate of $v$. Let $B(v,r)$ denote the open  ball about $v\in \bR^m$ of radius $r$ with respect to the Euclidean metric. A subset $K\in\bR^m$ is \emph{convex} if for every $u, v\in K$ we have that $tu+(1-t)v\in K$ for all $t\in(0,1)$. A point $w\in K$ is called an \emph{extreme} point of $K$ if $w=tu+(1-t)v$ for some $t\in(0,1)$ and  some $u, v\in K$ implies $u=v=w$.

We will work with the standard topology on $\bR^m$.
For $K\subset\bR^m$ we denote by $int(K)$ the interior of $K$ and by $\partial K$ the boundary of $K$.  The \emph{relative interior}, denoted by $ri(K)$, is the interior of $K$ with respect to the topology of the smallest affine subspace of $\bR^m$ containing $K$.

For a non-zero vector $\alpha\in\bR^m$ and $a\in \bR$ the hyperplane $H=H_{\alpha,a}\eqdef\{u\in \bR^m: u\cdot \alpha= a\}$ is said to \emph{cut} $K$ if both open half spaces determined by $H$ contain points of $K$. Here $\alpha$ is a normal vector to $H$. We say that $H$ is a \emph{supporting hyperplane} for $K$ if its distance to $K$ is zero but it does not cut $K$.

A set $F\subset K$ is a \emph{face} of $K$ if there exist a supporting hyperplane $H$ such that $F=K\cap H.$ We say a normal vector $\alpha$ to $H$ is pointing away from $K$ if for $w\in H$ the point $\alpha+w$ belongs to the open half space of $\bR^m\setminus H$ that does not intersect  $K$. We note that if $K$ has a non-empty interior then there exists a unique unit normal vector to $H$ that is pointing away from $K$.
A point $w\in K$ is called \emph{exposed} if $\{w\}$ is a face of $K$. We say  $w\in\partial K$ is smooth if the supporting hyperplane of $K$ that contains $w$ is unique. A compact convex set is strictly convex if all its boundary points are exposed.
Every exposed point of $K$ is an extreme point, but not vice versa. For example, consider a set $K=\{(v_1,v_2)\in\bR^2: v_1^2+v_2^2\le1\}\cup\{(v_1,v_2)\in\bR^2: -1\le v_1\le 1, 0\le v_2\le 1\}$. Then the points $(-1,0)$ and $(1,0)$ are extreme but not exposed.

We will split the boundary points of a given rotation set in three groups: exposed points, extreme non-exposed points and non-extreme. In the following sections we  analyze  these groups of boundary points  and  derive results about the existence/non-existence of  ground states and zero-temperature measures in their rotation
classes.

\section{Ground states at the boundary of rotation sets. }

In this section we study the relation between ground states and entropy-maximizing measures at the boundary of the rotation set.
As before, we consider a continuous dynamical system $f:X\to X$ with finite topological entropy and an upper semi-continuous entropy map.
We start by introducing some notation.

Let $\Phi\in C(X,\bR^m)$ be fixed, and let $\R(\Phi)$ be the corresponding rotation set of $\Phi$. We call $S^{m-1}=\{\alpha\in \bR^m: \|\alpha\|=1\}$ the set of direction vectors. Given a direction vector $\alpha$ we denote by $H_\alpha(\Phi)$ the supporting hyperplane of $\R(\Phi)$ for which $\alpha$ is the normal vector that points away from $\R(\Phi)$.
Since $\R(\Phi)$ is a compact convex set, it follows from standard arguments in convex geometry that $H_\alpha(\Phi)$ is uniquely defined.
For $w\in \partial \R(\Phi)$ we write $\gamma(w)=\{\alpha: w\in H_\alpha(\Phi)\}$ and call $\gamma(w)$ the set of direction vectors associated with $w$.
We denote $F
_\alpha(\Phi)\eqdef\R(\Phi)\cap H_\alpha(\Phi)$ for the face of the rotation set associated with the direction vector $\alpha$.
We write $\alpha\cdot \Phi(x)=\alpha_1 \phi_1(x)+\cdots+
\alpha_m \phi_m(x)$ and observe that $\alpha\cdot \Phi$ is a one-dimensional continuous potential. If $\mu \in GS(\alpha)\eqdef GS(\alpha\cdot\Phi)$ we say that $\mu$ is a  ground state in the direction of $\alpha$. Evidently $GS(\alpha)\not=\emptyset$, and  $\mu\in GS(\alpha)$  is a zero-temperature measure of the potential $\alpha\cdot \Phi$ if and only if $GS(\alpha)$ is a singleton. In this case we write $ZTM(\alpha)$ for $GS(\alpha)$.
We call $GS(\Phi)\eqdef\bigcup_{\alpha\in S^{m-1}} GS(\alpha)$ the set of  ground states of $\Phi$ and denote by $ZTM(\Phi)\subset GS(\Phi)$ the subset of zero temperature measures of $\Phi$.

The following theorem states that all ground states are entropy-maximizing measures at the boundary of the rotation set of $\Phi$. More precisely,  every ground state in the direction of $\alpha$ has its rotation vector on the supporting hyperplane $H_\alpha(\Phi)$ and maximizes entropy among all measures in the rotation class of the face $\R(\Phi)\cap H_\alpha(\Phi)$. In particular, every face of $\R(\Phi)$ contains at least one rotation vector with a corresponding ground state and if a face contains multiple ground states, they all have the same entropy. We further show that the set of all ground states in the direction $\alpha$ is a compact and (under mild additional assumptions) connected set.
\begin{theorem}\label{thm1} Let $f$ be a continuous map on a compact metric space for which the entropy map is upper-semi continuous. Let $\Phi:X\to\bR^m$ be a continuous potential and let $\alpha$ be a direction vector. Then
\begin{enumerate}
\item[(a)]
If  $\mu$ is a ground state in the direction of $\alpha$ then $\rv(\mu)\in H_\alpha(\Phi)$ and $h_\mu(f)= \sup\left\{h_\nu(f): \rv(\nu)\in F_\alpha(\Phi)\right\}$.
\item[(b)]
The set $\{\rv(\mu): \mu \in GS(\alpha)\}$ is compact.  Further, if for large enough $t$ there exist unique equilibrium states $\mu_t$
of the potential $t\alpha\cdot \Phi$ and $t\mapsto \rv(\mu_t)$ is continuous then $\{\rv(\mu): \mu \in GS(\alpha)\}$ is connected.
\end{enumerate}
\end{theorem}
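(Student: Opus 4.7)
The plan is to exploit the variational principle together with the upper semi-continuity of the entropy map for part (a), and to combine a diagonal argument with standard facts about $\omega$-limit sets of continuous curves for part (b).

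For part (a), I first recall the well-known fact that any ground state $\mu$ of a continuous potential $\varphi$ is a $\varphi$-maximizing measure: if $\mu=\lim_n\mu_{t_n}$ with $\mu_{t_n}\in ES(t_n\varphi)$ and $t_n\to\infty$, then dividing the variational inequality $h_{\mu_{t_n}}(f)+t_n\int\varphi\,d\mu_{t_n}\geq h_\nu(f)+t_n\int\varphi\,d\nu$ by $t_n$ and passing to the limit (using that entropies are bounded by $h_{\rm top}(f)$ and that $\nu\mapsto\int\varphi\,d\nu$ is continuous) yields $\int\varphi\,d\mu\geq\int\varphi\,d\nu$ for every $\nu\in\cM$. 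Applied to $\varphi=\alpha\cdot\Phi$, together with $\int\alpha\cdot\Phi\,d\nu=\alpha\cdot\rv(\nu)$, this gives $\alpha\cdot\rv(\mu)=\max_{w\in\R(\Phi)}\alpha\cdot w=:a$, hence $\rv(\mu)\in H_\alpha(\Phi)\cap\R(\Phi)=F_\alpha(\Phi)$. For the entropy claim, let $\nu\in\cM$ with $\rv(\nu)\in F_\alpha(\Phi)$, so that $\alpha\cdot\rv(\nu)=a$. The variational inequality for $t_n\alpha\cdot\Phi$ reads
$$h_{\mu_{t_n}}(f)+t_n\,\alpha\cdot\rv(\mu_{t_n})\geq h_\nu(f)+t_n a,$$
and since $\alpha\cdot\rv(\mu_{t_n})\leq a$ one obtains $h_{\mu_{t_n}}(f)\geq h_\nu(f)$ for every $n$. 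Upper semi-continuity of the entropy map forces $h_\mu(f)\geq\limsup_n h_{\mu_{t_n}}(f)\geq h_\nu(f)$. Since $\rv(\mu)\in F_\alpha(\Phi)$, this gives the desired maximality.

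For part (b), I would first show that $GS(\alpha)$ is closed in $\cM$. Since $X$ is a compact metric space, the weak-$\ast$ topology on $\cM$ is metrizable, so a standard diagonal construction turns a convergent sequence $\mu^{(k)}\to\mu$ with each $\mu^{(k)}\in GS(\alpha)$ into a single sequence $\mu_{s_k}\in ES(s_k\alpha\cdot\Phi)$ with $s_k\to\infty$ and $\mu_{s_k}\to\mu$, proving $\mu\in GS(\alpha)$. Hence $GS(\alpha)$ is compact, and its continuous image $\rv(GS(\alpha))\subset\bR^m$ is compact as well. For connectedness, under the hypothesis that $ES(t\alpha\cdot\Phi)=\{\mu_t\}$ for all $t\geq t_0$ and that $t\mapsto\rv(\mu_t)$ is continuous, I would identify $\rv(GS(\alpha))$ with the set of accumulation points of this curve at infinity, namely
$$\rv(GS(\alpha))=\bigcap_{T\geq t_0}\overline{\{\rv(\mu_t):t\geq T\}}.$$
One inclusion uses continuity of $\rv$; the other uses compactness of $\cM$ to promote a convergent sequence of rotation vectors along $t_n\to\infty$ to a convergent sequence of measures, whose limit lies in $GS(\alpha)$. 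Each set $\overline{\{\rv(\mu_t):t\geq T\}}$ is the closure of the continuous image of the connected interval $[T,\infty)$, hence compact and connected; and a decreasing intersection of compact connected subsets of a Hausdorff space is connected.

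The step I expect to require the most care is the identification of $\rv(GS(\alpha))$ with the $\omega$-limit set of the curve $t\mapsto\rv(\mu_t)$ in part (b). Uniqueness of $\mu_t$ is what makes this curve well-defined and its continuity meaningful; without it, distinct convergent subsequences of equilibrium states could a priori contribute points of $\rv(GS(\alpha))$ that the continuous-curve argument cannot automatically join. Everything else is a reasonably direct application of the variational principle, upper semi-continuity, and metrizability of $\cM$.
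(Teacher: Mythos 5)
Your proof is correct, and in the two places where the real work happens it takes a genuinely different (and shorter) route than the paper. For the entropy identity in (a), the paper proves $h_{\mu_{t_n}}(f)\geq\cH(w')$ for every $w'$ on the hyperplane through $\rv(\mu_{t_n})$ parallel to $H_\alpha(\Phi)$, constructs a transversal line segment producing points $w_n\to w$ on these hyperplanes, and then invokes the continuity of the localized entropy function $w\mapsto\cH(w)$ from \cite{KW1}; your observation that for any $\nu$ with $\alpha\cdot\rv(\nu)=a$ the variational inequality together with $\alpha\cdot\rv(\mu_{t_n})\leq a$ already yields $h_{\mu_{t_n}}(f)\geq h_\nu(f)$ for every $n$ bypasses all of this and needs only upper semi-continuity of the entropy map at the final step. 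Likewise, your derivation of $\rv(\mu)\in F_\alpha(\Phi)$ from the fact that ground states are $\alpha\cdot\Phi$-maximizing replaces the paper's quantitative estimate showing that for $t>\frac{2}{\varepsilon}h_{\rm top}(f)$ every equilibrium state of $t\alpha\cdot\Phi$ has rotation vector in a prescribed neighborhood $U$ of the face; the paper's version gives a slightly stronger, effective localization of the equilibrium states themselves (which it reuses in part (b)), while yours is the minimal argument for the stated conclusion. For connectedness in (b), the paper argues by contradiction, separating $\rv(GS(\alpha))$ by open sets and running an intermediate-value argument along the continuous curve $t\mapsto\rv(\mu_t)$, whereas you identify $\rv(GS(\alpha))$ with the limit set $\bigcap_{T\geq t_0}\overline{\{\rv(\mu_t):t\geq T\}}$ and quote the standard fact that a decreasing intersection of nonempty compact connected subsets of a Hausdorff space is connected; the two arguments capture the same phenomenon, but yours is cleaner, makes explicit exactly where uniqueness and continuity of $t\mapsto\rv(\mu_t)$ are used, and your diagonal argument also supplies the closedness of $GS(\alpha)$ that the paper merely asserts ``follows directly from the definition.''
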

\begin{proof}
We first prove  (a).
 Using that $\alpha$ is the unique unit normal vector of $H_\alpha(\Phi)$ that points away from $\R(\Phi)$ we may conclude for all $w\in H_\alpha(\Phi)$ that
\begin{equation}\label{eqsp}
\R(\Phi)\setminus H_\alpha(\Phi)\subset\{v\in \bR^m: \alpha\cdot(w-v)> 0\}.
\end{equation}
Let $U$ be any open set that contains the face $F_\alpha(\Phi)=\R(\Phi)\cap H_\alpha(\Phi)$. Then $\R(\Phi)\setminus U$ is a compact set which does not intercept $H_\alpha(\Phi)$ and consequently $\varepsilon\eqdef\text{dist}(\R(\Phi)\setminus U,H_\alpha(\Phi))>0$. Let $\mu, \nu\in \cM$ be  measures with $\rv(\mu)\in H_\alpha(\Phi)$ and $\rv(\nu)\notin U$. Let $t>\frac{2}{\varepsilon}h_{\rm top}(f)$. Applying \eqref{eqvarpri} and \eqref{eqsp} we obtain
\begin{align*}
P_{\rm top}(t\alpha\cdot\Phi)&\ge h_\mu(f) +\int t\alpha\cdot\Phi\,d\mu\\
&\ge h_\nu(f)-h_{\rm top}(f)+ t\alpha\cdot\int \Phi\,d\mu\\
&= h_\nu(f)-h_{\rm top}(f)+ t\alpha\cdot\int \Phi\,d\nu+t\alpha\cdot( \rv(\mu)-\rv(\nu))\\
&\ge h_\nu(f)-h_{\rm top}(f)+ t\alpha\cdot\int \Phi\,d\nu+t\,\text{dist}(\rv(\nu),H_\alpha(\Phi))\\
&\ge h_\nu(f)-h_{\rm top}(f)+ t\alpha\cdot\int \Phi\,d\nu+t\varepsilon\\
&> h_\nu(f)+ t\alpha\cdot\int \Phi\,d\nu+h_{\rm top}(f).
\end{align*}
Hence
\begin{equation}
P_{\rm top}(t\alpha\cdot\Phi)-\left(h_\nu(f) +\int t\alpha\cdot\Phi\,d\nu\right)>h_{\rm top}(f),
\end{equation}
 and we conclude that $\nu$ is not an equilibrium state of $t\alpha\cdot\Phi$. It follows that  for $t>\frac{2}{\varepsilon}h_{\rm top}(f)$ the rotation vectors of all equilibrium states of the potentials $t\alpha\cdot\Phi$  must be contained in $U$, and thus $\rv(GS(\alpha))\subset U$. Since $U$ was an arbitrary open set containing $F_\alpha(\Phi)$, we conclude that $\rv(GS(\alpha))\subset F_\alpha(\Phi)$.

Next we show that $\mu\in GS(\alpha)$ maximizes entropy among the invariant measures with rotation vectors in $F_\alpha(\Phi)$. Since $\mu$ is a  ground state in the direction of $\alpha$    there exist an increasing sequence $(t_n)_n\subset \bR$ with $t_n \to\infty$ and a corresponding sequence of equilibrium states  $(\mu_{t_n})_n\subset   \bigcup_n ES(t_n\alpha\cdot \Phi)$ such that $\mu_{t_n}\to \mu$ as $n\to \infty$. Hence,
 $\rv(\mu_{t_n})\to \rv(\mu)$ as $n\to \infty$. For each $t_n$ we consider the hyperplane  $H(t_n)=H_{\alpha,\rv(\mu_{t_n})}\eqdef\{w\in \bR^m: \alpha\cdot w=\alpha\cdot \rv(\mu_{t_n})\}$. We note that $H(t_n)$ and $H_\alpha(\Phi)$ are parallel hyperplanes with distance
 at most $||\rv(\mu_{t_n})-\rv(\mu)||$.

We claim that $h_{\mu_{t_n}}(f)\geq \cH(w)$ for all $w\in H(t_n)\cap \R(\Phi)$. Let $w\in H(t_n)\cap \R(\Phi)$ and
let $\nu\in \cM_\Phi(w)$. Since $\alpha\cdot w = \alpha \cdot \rv(\mu_{t_n})$ we conclude that
\[
\int t_n \alpha\cdot \Phi d\nu=t_n \alpha \cdot \rv(\nu)=t_n\alpha\cdot w =t_n \alpha \cdot \rv(\mu_{t_n})=\int t_n \alpha\cdot \Phi d\mu_{t_n}.
\]
On the other hand, since $\mu_{t_n}$ is an equilibrium state of the potential $t_n\alpha\cdot \Phi$ it follows from \eqref{eqvarpri} that $h_{\mu_{t_n}}(f)\geq h_{\nu}(f)$. Since $\nu\in \cM_\Phi(w)$ was arbitrary we obtain that $h_{\mu_{t_n}}(f)\geq \cH(w)$ which proves the claim.

Suppose $w\in F_\alpha(\Phi)$. If $H(t_1)$ cuts $\R(\Phi)$, there is a point $w_0\in\R(\Phi)$ which belongs to the other half space determined by $H(t_1)$ than $w$. Otherwise, let $w_0$ be any point in $ri (\R(\Phi))$. The line segment $L=\{sw_0+(1-s)w: s\in[0,\,1]\}\subset\R(\Phi)$ crosses $H(t_1)$, and hence is not parallel to any $H(t_n)$. Denote by $w_n$ the intersection points of $L$ and $H(t_n)$. By construction $dist(H(t_n),H_\alpha(\Phi))\to 0$ and hence $w_n\to w$. It follows from the above argument that $H(w_n)\le h_{\mu_{t_n}}(f)$. Now the continuity of the map $w\mapsto \cH(w)$ and the fact that  $\rv(\mu_{t_n})\to \rv(\mu)$ as $n\to\infty$ imply that $\cH(w)\le h_{\mu}(f)$. We conclude that $\mu$ is entropy-maximizing among the invariant measures with rotation vectors in the face $F_\alpha(\Phi)$.

Finally, we prove (b). The assertion that $\{\rv(\mu): \mu \in GS(\alpha)\}$ is a closed (and hence compact) subset of $\bR^m$ follows directly from the definition of $GS(\alpha)$. To complete the proof we still have to show that $\{\rv(\mu): \mu \in GS(\alpha)\}$ is connected.
Suppose that for large enough $t$ there exist unique equilibrium states $\mu_t$
of the potentials $t\alpha\cdot \Phi$ and that $t\mapsto \rv(\mu_t)$ is continuous. The case when $\{\rv(\mu): \mu \in GS(\alpha)\}$ contains only one rotation vector is trivial. Therefore, we can assume that there exist at least two distinct rotation vectors associated with the ground states in the direction of $\alpha$.
After a linear change of coordinates we may assume that $H_\alpha(\Phi)=\bR^{m-1}\times \{0\}$. Suppose $\{\rv(\mu): \mu \in GS(\alpha)\}$ is disconnected. Then there exist disjoint open sets $U_{m-1}, V_{m-1}\subset \bR^{m-1}$ such $(U_{m-1}\times \{0\}, V_{m-1}\times \{0\})$ forms a disconnection of $\{\rv(\mu): \mu \in GS(\alpha)\}$. In particular, there exist measures $\nu_1,\nu_2\in GS(\alpha)$ with $\rv(\nu_1)\in U_{m-1}\times \{0\}$ and  $\rv(\nu_2)\in V_{m-1}\times \{0\}$.

Given $\epsilon>0$ we define sets $U(\epsilon)=U_{m-1}\times (-\epsilon,\epsilon)$ and  $V(\epsilon)=V_{m-1}\times (-\epsilon,\epsilon)\subset \bR^m$. Clearly $U(\epsilon)$ and $V(\epsilon)$ are disjoint open sets in $\bR^m$ with $\rv(GS(\alpha))\subset U(\epsilon)\cup V(\epsilon)$.
It now follows from  (a) that there is $t_0>0$ such that $\rv(\mu_t)\in \bR^{m-1}\times (-\epsilon,\epsilon)$ for all $t\geq t_0$.
 Moreover, since $\nu_1, \nu_2$ are ground states in the direction of $\alpha$ there exists $t_0\leq t_1=t_1(\epsilon)< t_2=t_2(\epsilon)$ such that $\rv(\mu_{t_1})\in U(\epsilon)$ and $\rv(\mu_{t_2})\in V(\epsilon)$.

 Note that $ t\mapsto \rv(\mu_t)$ is  continuous  on $[t_1,t_2]$ with end points $\rv(\mu_{t_1})$ and $\rv(\mu_{t_2})$. We conclude that there exists $t_1<t_\epsilon<t_2$ such that
$$({\rm pr}_1(\rv(\mu_{t_\epsilon})),\cdots, {\rm pr}_{m-1}(\rv(\mu_{t_\epsilon})))\not\in (U_{m-1}\cup V_{m-1})\text{ and }|{\rm pr}_m(\rv(\mu_{t_\epsilon}))|\leq \epsilon.$$  Letting $\epsilon$ go to zero and applying the compactness of $\cM$ we can construct  a sequence $(\epsilon_n)_n$ with $\epsilon_n\to 0$  such that $\lim_{n\to\infty}\mu_{\epsilon_n}=\mu\in\cM$. It follows from the construction that  ${\rm pr}_m(\rv(\mu))=0$ and hence $\rv(\mu)\in H_\alpha(\Phi)\setminus (U(\epsilon)\cup V(\epsilon))$. This implies that
$\mu\in GS(\alpha)$ and $\rv(\mu)\not\in (U_{m-1}\cup V_{m-1})\times \{0\}$ which is a contradiction.
\end{proof}

\begin{remarks}
(i) As noted in Section 2.3 the property that the potentials $t\alpha\cdot \Phi$ have  unique equilibrium states holds for various classes of systems and potentials. In particular, Theorem \ref{thm1} (b) holds for subshifts of finite type, hyperbolic systems and expansive homeomorphisms with specification and H\"older continuous potentials. We note that in these cases the map $t\mapsto \rv(\mu_t)$ is real-analytic (see \cite{KW1}).

(ii) We note that the case  $|\rv(GS(\alpha))|\geq 2$  actually occurs. Indeed, we construct in Theorem  \ref{thm5}  a rotation set
with the  property that $GS(\alpha)$ is a non-trivial line segment.

\end{remarks}

As a consequence of Theorem \ref{thm1} (a) we obtain following:

\begin{corollary}\label{cor11} Let $f$ be a continuous map on a compact metric space for which the entropy map is upper-semi continuous. Let $\Phi:X\to\bR^m$ be a continuous potential and let $\alpha$ be a direction vector. Then the entropy function $\cH$ is constant
on $GS(\alpha)$.
\end{corollary}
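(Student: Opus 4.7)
The plan is to read this off directly from Theorem \ref{thm1}(a). First I would introduce the abbreviation $S_\alpha := \sup\{h_\nu(f) : \rv(\nu) \in F_\alpha(\Phi)\}$, a quantity depending only on $\alpha$ and $\Phi$. Theorem \ref{thm1}(a) then tells me two things about every $\mu \in GS(\alpha)$: its rotation vector lies in $F_\alpha(\Phi)$, and its measure-theoretic entropy equals $S_\alpha$. All that remains is to compare this entropy with the localized entropy $\cH(\rv(\mu))$ defined by \eqref{defH}.

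Next I would establish the identity $\cH(\rv(\mu)) = S_\alpha$ by two one-line inequalities. For the upper bound, the containment of rotation classes $\cM_\Phi(\rv(\mu)) \subset \cM_\Phi(F_\alpha(\Phi))$ gives $\cH(\rv(\mu)) \leq S_\alpha$ by monotonicity of the supremum. For the lower bound, $\mu$ itself is a candidate in the supremum defining $\cH(\rv(\mu))$, so $\cH(\rv(\mu)) \geq h_\mu(f) = S_\alpha$. Combining the two bounds yields $\cH(\rv(\mu)) = S_\alpha$, which is independent of the choice of $\mu \in GS(\alpha)$. Hence $\cH$ is constant on $\{\rv(\mu) : \mu \in GS(\alpha)\}$, which is the claim.

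There is really no obstacle here once Theorem \ref{thm1}(a) is in hand; the corollary is a short bookkeeping consequence of the definitions of $\cH$ and $F_\alpha(\Phi)$. The only conceptual point worth flagging is that the supremum defining $\cH$ is in fact attained (by the standing upper semi-continuity assumption on the entropy map), so the lower bound reduces to the tautology that $\mu$ is a valid test measure for the localized entropy at its own rotation vector.
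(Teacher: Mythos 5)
Your proof is correct and follows exactly the route the paper intends: the paper states Corollary \ref{cor11} as an immediate consequence of Theorem \ref{thm1}(a) without writing out the details, and your two-inequality argument ($\cH(\rv(\mu))\le S_\alpha$ by inclusion of rotation classes, $\cH(\rv(\mu))\ge h_\mu(f)=S_\alpha$ since $\mu$ competes in its own localized supremum) is precisely the bookkeeping being left to the reader.
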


\section{Exposed points. }\label{exposed_pts}

It follows from Theorem \ref{thm1} that  at an exposed point the corresponding rotation class necessarily  contains  a ground state. Moreover, all ground states in the corresponding direction must have the same rotation vector and the same entropy. One might suspect that this implies the existence of a zero-temperature measure. We show in Theorem \ref{thm3}  that contrary to the intuition such a statement does in general not hold in any dimension $m$.

First we summarise results from the previous sections applied to exposed points.
\begin{theorem}\label{thm2} Let $f$ be a continuous map on a compact metric space for which the entropy map is upper-semi continuous, and let $\Phi:X\to\bR^m$ be a continuous potential. Suppose $w\in \partial \R(\Phi)$ is an exposed point. Then
for every direction vector $\alpha\in \gamma(w)$ and every $\mu\in GS(\alpha)$  we have $\rv(\mu)=w$ and  $h_\mu(f)= \sup\left\{h_\nu(f): \rv(\nu)=w\right\}$.
\end{theorem}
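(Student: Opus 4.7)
My plan is to obtain Theorem \ref{thm2} as a direct specialization of Theorem \ref{thm1}(a), using the exposed-point hypothesis to collapse the face $F_\alpha(\Phi)$ appearing in that theorem to the singleton $\{w\}$. Once this purely geometric reduction is in place, both assertions of Theorem \ref{thm2} follow immediately from Theorem \ref{thm1}(a) with no additional thermodynamic or variational input.

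The first step is to extract the correct supporting hyperplane from the hypothesis. Since $w$ is exposed, the definition recalled in Section~\ref{sec:2}.4 provides a supporting hyperplane $H$ of $\R(\Phi)$ with $H\cap\R(\Phi)=\{w\}$; its outward unit normal $\alpha$ then lies in $\gamma(w)$ and realizes $F_\alpha(\Phi)=\{w\}$. The second step is to invoke Theorem \ref{thm1}(a) for any such $\alpha\in\gamma(w)$: a ground state $\mu\in GS(\alpha)$ satisfies $\rv(\mu)\in F_\alpha(\Phi)=\{w\}$, which forces $\rv(\mu)=w$, and the entropy identity $h_\mu(f)=\sup\{h_\nu(f):\rv(\nu)\in F_\alpha(\Phi)\}$ collapses, because $F_\alpha(\Phi)=\{w\}$, to $h_\mu(f)=\sup\{h_\nu(f):\rv(\nu)=w\}$, which is the second conclusion of the theorem.

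There is essentially no technical obstacle beyond this translation. The only point requiring care is ensuring that each direction vector $\alpha\in\gamma(w)$ under consideration is associated with the singleton face $\{w\}$ rather than with some larger face of $\R(\Phi)$ that happens to contain $w$; this is precisely what the exposed-point hypothesis encodes. In the smooth case emphasized in Section \ref{exposed_pts} and in Theorem B this is automatic because $\gamma(w)$ is a singleton, and in general it identifies the relevant $\alpha$ as those in the relative interior of the normal cone at $w$. Once this bookkeeping is recorded, Theorem \ref{thm2} is a line-by-line specialization of Theorem \ref{thm1}(a), so I anticipate no genuine difficulty in the proof.
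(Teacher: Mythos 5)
Your proposal follows exactly the same route as the paper, whose entire proof of Theorem \ref{thm2} is the one-line remark that it follows immediately from Theorem \ref{thm1}; specializing Theorem \ref{thm1}(a) to a singleton face is indeed the whole content. The one place where you overreach is the claim that the exposed-point hypothesis forces $F_\alpha(\Phi)=\{w\}$ for \emph{every} $\alpha\in\gamma(w)$. Being exposed only means that \emph{some} supporting hyperplane meets $\R(\Phi)$ exactly in $\{w\}$; other directions in $\gamma(w)$ can support a strictly larger face through $w$ (a corner of a square together with the outward normal to one of its sides is the standard picture). For such $\alpha$, Theorem \ref{thm1}(a) only gives $\rv(\mu)\in F_\alpha(\Phi)$ and entropy maximality over $F_\alpha(\Phi)$, not $\rv(\mu)=w$; your closing remark restricting to the relative interior of the normal cone concedes this point, but that restriction does not appear in the statement being proved. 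The same imprecision is present in the paper itself (note that the discussion following Corollary \ref{cor1} quietly restricts to the interior of $\gamma(w_i)$ for exactly this reason), so your argument is faithful to the paper's; it is airtight precisely for those $\alpha$ that actually expose $w$, which covers the smooth exposed points used in Theorem B and Corollary \ref{cor22}.
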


\begin{proof}
The result follows immediately from Theorem \ref{thm1}.
\end{proof}
\begin{remark}
We note that by definition $\gamma(w)\not=\emptyset$. However $\gamma(w)$ is not necessarily a singleton. For example,
if $m=2$ and $w$ is a boundary point of $\R(\Phi)$ at which $\partial \R(\Phi)$ is not differentiable,  then $\gamma(w)$ is a non-empty interval in $S^1$ whose endpoints are the unit normal vectors to the right and left-hand side tangent lines at $w$.
\end{remark}

An immediate consequence of Theorem \ref{thm2} is the following.

\begin{corollary} \label{cor1}
Let $f$ be a continuous map on a compact metric space for which the entropy map is upper-semi continuous, and let $\Phi:X\to\bR^m$ be a continuous potential. Suppose $w\in \partial \R(\Phi)$ is an exposed point with a unique localized measure $\mu$ of maximal entropy at $w$. Then for every $\alpha\in\gamma(w)$ we have $GS(\alpha)=\{\mu\}$. In particular, if $ES(t\alpha\cdot \Phi)$ is a singleton for large enough $t$, then the sequence of equilibrium measures $\mu_{t\alpha\cdot \Phi}$ converges to $\mu$ as $t\to \infty$, that is $\mu$ is a zero-temperature measure in the direction of $\alpha$.
\end{corollary}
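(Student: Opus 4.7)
The corollary follows almost immediately by combining the hypothesis of uniqueness with Theorem \ref{thm2}, so the proof plan is short.

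First, since $w$ is exposed, the singleton $\{w\}$ is itself a face of $\R(\Phi)$, so $F_\alpha(\Phi)=\{w\}$ for every $\alpha\in\gamma(w)$. Fix such an $\alpha$ and let $\nu\in GS(\alpha)$ be arbitrary. By Theorem \ref{thm2}, we have $\rv(\nu)=w$ and $h_\nu(f)=\sup\{h_\eta(f):\rv(\eta)=w\}=\cH(w)$. In other words, every ground state in the direction of $\alpha$ is a localized measure of maximal entropy at $w$. By the uniqueness hypothesis, the only such measure is $\mu$, and therefore $GS(\alpha)=\{\mu\}$.

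For the second assertion, assume that for all sufficiently large $t$ the set $ES(t\alpha\cdot\Phi)$ is a singleton $\{\mu_{t\alpha\cdot\Phi}\}$. Recall that $\cM$ endowed with the weak$^\ast$ topology is compact and metrizable (since $X$ is a compact metric space), so any net of measures has at least one accumulation point as $t\to\infty$, and convergence of the net is equivalent to the uniqueness of that accumulation point. By definition of $GS(\alpha)$, every weak$^\ast$ accumulation point of $\mu_{t\alpha\cdot\Phi}$ as $t\to\infty$ lies in $GS(\alpha)=\{\mu\}$. Hence the only possible accumulation point is $\mu$, and consequently $\mu_{t\alpha\cdot\Phi}\to\mu$ as $t\to\infty$, i.e.\ $\mu$ is the zero-temperature measure in the direction of $\alpha$.

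There is no real obstacle here; the statement is a direct corollary of Theorem \ref{thm2} together with the observation that a sequence in a compact metrizable space with a unique accumulation point converges. The only point that warrants explicit mention is this last topological fact, since it is what upgrades ``every ground state equals $\mu$'' to actual convergence of the temperature-parametrized family of equilibrium states.
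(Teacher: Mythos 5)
Your proposal is correct and follows exactly the route the paper intends: the paper gives no separate proof, presenting the corollary as an immediate consequence of Theorem \ref{thm2}, and your two steps (uniqueness of the entropy-maximizing measure in the rotation class of $w$ forces $GS(\alpha)=\{\mu\}$, then compactness of $\cM$ upgrades a unique accumulation point to convergence of $t\mapsto\mu_{t\alpha\cdot\Phi}$) are precisely the intended argument. Your explicit mention of the topological fact that a family in a compact metrizable space with a unique accumulation point converges is a worthwhile addition rather than a deviation.
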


Corollary \ref{cor1} states that all the sequences $t\alpha\cdot \Phi$ converge to the same measure $\mu$ independently of $\alpha$. An example of a system that displays this phenomenon is given in \cite[Example 2]{KW1}  where we construct a rotation set $\R(\Phi)\subset \bR^2$ (associated with a one-sided shift map and a Lipschitz continuous potential $\Phi$) such that $\partial \R(\Phi)$ is a polygon with vertices $w_1,\cdots,w_k$ and $\cH(w_i)=\log 2$ for $i=1,\cdots,k$. Moreover,  for all  vertices $w_i$ there exists a unique localized measure $\mu_i$ of maximal entropy at $w_i$. Note that $\gamma(w_i)$ is a non-empty closed interval in $S^1$ and for all $\alpha$ in the interior of $\gamma(w_i)$ the measures
$\mu_{t\alpha\cdot \Phi}$ converge to $\mu_i$. This example can be easily modified so that $\cH(w_1)=\log 3$ and $\cH(w_i)=\log 2$ for $i=2,\cdots,k$. Then in view of Theorem \ref{thm1} we obtain that the measures $\mu_{t\alpha\cdot \Phi}$ converge to $\mu_1$ for all $\alpha\in\gamma(w_1)$. In this case, all points in the relative interior of the faces of the $\R(\Phi)$ adjacent to $w_1$ do not correspond to ground states.

The previous example displays an uncountable set of direction vectors all of which correspond to the same zero-temperature measure. In the following we establish the existence of the opposite phenomenon. Namely, we show that   an exposed point $w$ of $\R(\Phi)$  does in general not need to be attained by the rotation vector of a zero-temperature measure. Moreover, this phenomenon can occur even in the case when there is only one direction vector in $\gamma(w)$, i.e. $w$ is a smooth exposed point.

We will need the following.
We denote by $\Fix(f)$ the set of all fixed points of $f$, by $\Per_n(f)$ the set of all $\xi\in \Fix(f^n)$ and by $\Per(f)=\bigcup_n \Per_n(f)$  the set of all periodic points of $f$ . Given $\xi\in \Per_n(f)$ we denote by $\mu_{\xi}$ the unique invariant probability measure  supported on the orbit of $\xi$, that is,
\begin{equation}\label{defpermes}
\mu_\xi=\frac{1}{n}\sum_{k=0}^{n-1} \delta_{f^k(\xi)},
\end{equation}
where $\delta$ denotes the Dirac measure supported on the point. Moreover, we define the rotation vector of $\xi$ by
\begin{equation}
\rv(\xi)\eqdef \rv(\mu_\xi) = \frac{1}{n} \sum_{k=0}^{n-1} \Phi(f^k(\xi)).
\end{equation}

In the next theorem we construct an extension of a one-dimensional potential to $\bR^m$  that preserves one-dimensional ground states.

\begin{theorem}\label{thm3} Let $f$ be a continuous map on a compact metric space for which the entropy map is upper-semi continuous and assume that the periodic point measures are dense in $\cM$.
For a closed $f$-invariant set  $Y\subset X$ we define the potential $\varphi_Y:X\to\bR$  by $\varphi_Y(\xi)=dist(\xi,Y)$. Suppose that for all $t\geq 0$   the potential $t\varphi_Y$ has a unique  equilibrium state $\mu_{t\varphi_Y}$ and that $GS(\varphi_Y)$ contains $k$ distinct ergodic measures. Then for any $m\in\bN$ there exists a continuous potential $\Phi:X\to\bR^m$ with ${\rm pr}_1(\Phi)=\varphi_Y$ such that  $\R(\Phi)$ has a smooth exposed point $w$  that
coincides with the rotation vectors of $k$ distinct ergodic ground states.

\end{theorem}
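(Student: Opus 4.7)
The plan is to set $\alpha = e_1 \in S^{m-1}$, so that $\alpha\cdot\Phi = \varphi_Y$ and the hypothesis places the $k$ given ergodic ground states $\mu_1,\ldots,\mu_k$ directly into $GS(\alpha)$. Writing $c_0 := \sup_{\mu}\int\varphi_Y\,d\mu = \int\varphi_Y\,d\mu_i$, the candidate smooth exposed point is $w := (c_0,0,\ldots,0)$ with normal direction $e_1$. In order to make each $\mu_i$ project to $w$ and to make $w$ an exposed point of $\R(\Phi)$ in the direction $e_1$, it suffices that $\int\phi_j\,d\mu = 0$ for every $\mu \in \cM_{\max}(\varphi_Y) := \{\mu : \int\varphi_Y\,d\mu = c_0\}$. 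I guarantee this by declaring $\phi_j|_K \equiv 0$, where $K := \overline{\bigcup_{\mu\in\cM_{\max}(\varphi_Y)}\operatorname{supp}\mu}$ is the closed invariant maximizing set; the uniqueness of equilibrium states combined with $k\geq 2$ prevents $\varphi_Y$ from being cohomologous to a constant, so there are ample periodic orbits in $X\setminus K$.

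For \emph{smoothness} of $w$ I need the tangent cone $T_w\R(\Phi)$ to fill the half-space $\{d\in\bR^m : d_1\leq 0\}$. Fix a countable dense set $\{d_l\}_{l\in\bN}\subset\{d\in S^{m-1} : d_1<0\}$. Using density of periodic measures, I select periodic orbits $\xi^{(n,l)}\subset X\setminus K$ whose orbit measures approach $\mu_1$ as $n\to\infty$, so that $r_{n,l}:=\int\varphi_Y\,d\mu_{\xi^{(n,l)}}\nearrow c_0$. I then prescribe $\phi_j$ to be the constant
\[
a^{(n,l)}_j := \frac{c_0-r_{n,l}}{|d_{l,1}|}\,d_{l,j}
\]
on the (finite) orbit of $\xi^{(n,l)}$, and extend $\phi_j$ continuously to $X$ via Tietze's theorem with $\phi_j|_K\equiv 0$, using a dovetailed enumeration of $(n,l)$ that makes the $a^{(n,l)}_j$ go to zero along orbit points approaching $K$. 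By construction $\rv(\mu_{\xi^{(n,l)}}) = w + t_{n,l}\,d_l$ with $t_{n,l}=(c_0-r_{n,l})/|d_{l,1}|\downarrow 0$, so each $d_l$ lies in $T_w\R(\Phi)$; density and convexity of the tangent cone then give $T_w\R(\Phi) = \{d:d_1\leq 0\}$ and $\gamma(w)=\{e_1\}$.

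The remaining verifications are straightforward: (i)~$\rv(\mu_i)=w$ because $\operatorname{supp}\mu_i\subset K$ and $\phi_j|_K\equiv 0$; (ii)~the face $F_{e_1}(\Phi)=\{w\}$ because any $v\in\R(\Phi)$ with $v_1=c_0$ comes from a maximizing $\mu$, which is supported on $K$, forcing $\int\phi_j\,d\mu=0$ and thus $v=w$; (iii)~$w$ is smooth by the tangent-cone analysis above. Since $\alpha\cdot\Phi = \varphi_Y$, the hypothesis gives $\{\mu_1,\ldots,\mu_k\}\subset GS(\alpha)$, all sharing the rotation vector $w$, which is exactly the conclusion of the theorem.

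\textbf{Main obstacle.} The delicate step is producing a globally continuous $\phi_j$ compatible with the countably many prescriptions $\phi_j|_K\equiv 0$ and $\phi_j\equiv a^{(n,l)}_j$ on the orbit of $\xi^{(n,l)}$. The tension is sharpest for directions $d_l$ with $|d_{l,1}|$ small (nearly tangent to the supporting hyperplane $\{v_1=c_0\}$), where $a^{(n,l)}_j$ fails to be small unless $c_0-r_{n,l}$ decreases correspondingly faster. Resolving this requires a careful dovetailed enumeration of the indices $(n,l)$ together with the pointwise separation of the chosen orbits (available by density of periodic measures), so that Tietze extension yields a globally continuous $\phi_j$ simultaneously compatible with all prescriptions.
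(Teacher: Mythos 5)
Your overall strategy is the same as the paper's: keep $\mathrm{pr}_1(\Phi)=\varphi_Y$, make the extra coordinates vanish on the set carrying the ground states, and pump in periodic orbits whose rotation vectors approach the candidate point from enough directions to force uniqueness of the supporting hyperplane. (The paper works at the origin with $\alpha=(-1,0,\dots,0)$ and orbits approaching $Y$, you work at the maximizing end $w=(c_0,0,\dots,0)$; that difference is cosmetic.) But the step you flag as the ``main obstacle'' is a genuine gap, not a technicality, and your proposed resolution does not work. Weak$^*$ density of periodic measures gives you orbits $\xi^{(n,l)}$ with $\int\varphi_Y\,d\mu_{\xi^{(n,l)}}$ close to $c_0$, but it gives you \emph{no pointwise control} over where those orbits sit in $X$: an orbit whose empirical measure is close to $\mu_1$ may spend a small fraction of its time anywhere in $X$, in particular arbitrarily close to (or accumulating on) points of previously chosen orbits far from $K$. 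Since you prescribe a \emph{different nonzero constant} $a^{(n,l)}_j$ on each entire orbit, the set $K\cup\bigcup_{n,l}\mathcal{O}(\xi^{(n,l)})$ need not be closed, and on its closure the prescription need not be continuous (a point of an early orbit carrying the fixed value $a^{(q)}_j\neq 0$ can be a limit of points of later orbits carrying values tending to $0$). Tietze extension requires a continuous function on a closed set, so it cannot repair this; and no ``dovetailed enumeration'' helps, because the conflict is caused by where the orbits go, which you cannot choose.

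The paper's proof resolves exactly this difficulty differently: it assigns nonzero values only on the subset $Z_n\subset\mathcal{O}_n$ of orbit points lying within distance $x_n+\tfrac1n$ of $Y$. This makes $Z=\bigcup_n Z_n\cup Y$ closed (any accumulation of points from infinitely many $Z_n$ lands in $Y$, where the prescribed value is $0$), so the partial definition is continuous and Tietze applies; the price is that $\phi_i$ is uncontrolled (only bounded by $1$) on $\mathcal{O}_n\setminus Z_n$, which is compensated by the estimate $k_n=|Z_n|/p_n\to1$. Note also that the paper does not need your full tangent-cone argument with a dense set of exact directions: it suffices to arrange, for each coordinate $i\ge2$, two interleaved sequences of orbits with ${\rm pr}_i(\rv(\xi_n))/{\rm pr}_1(\rv(\xi_n))\to+\infty$ and $\to-\infty$ (via the alternating values $(-1)^ny_n$ with $y_n/x_n\to\infty$), since any candidate supporting hyperplane with $\beta_i\neq0$ is then cut. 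If you want to salvage your exact-ray construction, you must either localize the nonzero values near $K$ as the paper does, or replace the single Tietze extension by a uniformly convergent sum of bump functions with $\sum_{q}|a^{(q)}|<\infty$ and accept only approximate directions --- but as written the continuity of $\phi_j$ is unproved and the argument does not go through.
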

\begin{proof}
We start by defining $\Phi=(\phi_1,...,\phi_m):X\to \bR^m$ with $\phi_1=\varphi_Y$. For this we define $\phi_i\quad (i=2,...,m)$ in the following way. The one-dimensional rotation set of $\varphi_Y$ is an interval $[0,\,a]$ for some $a>0$. Let $(x_n)\subset [0,\,a]$ be an exponentially decreasing sequence and let $\varepsilon_n=\frac{1}{4}(x_{n}-x_{n+1})$. In this case the ratio $\frac{\varepsilon_n}{x_n}$ is a positive constant less than $\frac14$. Since the periodic orbits are dense in $\cM$, for each $n$ there is a periodic point $\xi_n\in X$ such that $\rv_{\phi_1}(\xi_n)\in (x_n-\varepsilon_n,x_n+\varepsilon_n)$. We denote the smallest period of $\xi_n$ by $p_n$ and let $\Or_n=\{\xi_n,f(\xi_n),f^2(\xi_n),...,f^{p_n-1}(\xi_n)\}$.

We denote by $Z_n=\{\xi\in\Or_n: d(\xi,Y)\le x_n+\frac{1}{n}\}$ and let $Z=\left(\cup_{n=1}^\infty Z_n\right)\cup Y$. First we show that $Z$ is a closed subset of $X$. Suppose $(\zeta_j)$ is a convergent sequence in $Z$ and its limit is not in $Z_n$ for all $n\in\bN$. Since all sets $Z_n$ are finite, for any $l\in\bN$  there is $M>0$ such that  $\zeta_j$ is not in $\cup_{i=1}^l Z_i$ for all $j>M$. Then $dist(\zeta_j, Y)<x_l+\frac{1}{l}$ and as a consequence  $\lim_{j\to\infty}\zeta_j$ is in $Y$. This proves that $Z$ is closed.

We pick any sequence $(y_n)\subset \bR$  which satisfies
\begin{itemize}
  \item $0<y_n\le 1$
  \item $(y_n)$ monotonically decreases to zero
  \item $\lim_{n\to\infty} nx_n/y_n=0$
\end{itemize}
Note that we may take $y_n=n^{-\beta}$ for any $\beta>0$.  For $i=2,...,m$ we define
\begin{equation}\label{def_phi_i}
\phi_i(\xi)=\left\{
                         \begin{array}{ll}
                           (-1)^n y_n & \hbox{if $\xi\in Z_n$ and $(i-2)\equiv n \mod (m-1)$;} \\
                           0 & \hbox{if $\xi\in Z_n$ and $(i-2)\not\equiv n \mod (m-1)$;} \\
                           0 & \hbox{if $\xi\in Y$.}
                         \end{array}
                       \right.
\end{equation}
Since all limit points of $Z$ are in $Y$ and $y_n\to 0$, each $\phi_i:Z\to \bR$ is continuous. By the Tietze extension theorem we can extend $\phi_i$ to be a continuous function on $X$ in such a way that $\sup\{|\phi_i(\xi)|:\xi\in X\}\le 1$.

Next we estimate the values of the rotation vectors of $\xi_n$ using the information about their projections onto the first coordinate axis. Denote by $c_n$ the cardinality of $Z_n$ and let $k_n=\frac{c_n}{p_n}$. Note that $k_n\le 1$. We have
\begin{align*}
  {\rm pr}_1(\rv(\xi_n)) &= \frac{1}{p_n}\sum_{\xi\in\Or_n}\phi_1(\xi) \\
 &\ge \frac{1}{p_n}\sum_{\xi\notin Z_n}\phi_1(\xi)\\
&\ge \frac{p_n-c_n}{p_n}\left(x_n+\frac{1}{n}\right)\\
&=(1-k_n)\left(x_n+\frac{1}{n}\right)
\end{align*}
Since $\rv_{\phi_1}(\xi_n)\in (x_n-\varepsilon_n,x_n+\varepsilon_n )$, we obtain $(1-k_n)\left(x_n+\frac{1}{n}\right)\le x_n+\varepsilon_n$ and hence
\begin{equation}\label{k_n-bound}
  k_n\ge\frac{1-n\varepsilon_n}{1+nx_n}
\end{equation}
The exponential decay of the sequence $x_n$ implies that $k_n\to 1$ as $n\to \infty$.

From now on we assume that $n$ is even and obtain a lower bound for ${\rm pr}_i(\rv(\xi_n)),\, i=2,...,m$. In the case when $n$ is odd, the upper bound  for ${\rm pr}_i(\rv(\xi_n))$ could be obtained in a similar way. Using (\ref{k_n-bound}) for $i\in\{2,...,m\}$ and any even $n\in\bN$ satisfying $(i-2)\equiv n \mod (m-1)$ we have
\begin{align*}
 {\rm pr}_i(\rv(\xi_n)) &= \frac{1}{p_n}\sum_{\xi\in\Or_n}\phi_i(\xi) \\
 &= \frac{1}{p_n}\sum_{\xi\in Z_n}y_n+\frac{1}{p_n}\sum_{\xi\notin Z_n}\phi_i(\xi)\\
&\ge \frac{c_n}{p_n}y_n-\frac{p_n-c_n}{p_n}\\
&=k_ny_n-(1-k_n)\\
&\ge \frac{1-n\varepsilon_n}{nx_n+1}y_n-\frac{nx_n+n\varepsilon_n}{nx_n+1}\\
&= y_n\left(\frac{1-n\varepsilon_n-\frac{nx_n}{y_n}-\frac{n\varepsilon_n}{y_n}}{nx_n+1}\right)
\end{align*}
When $n$ increases, $n\varepsilon_n,\,\frac{nx_n}{y_n}$ and $\frac{n\varepsilon_n}{y_n}$ tend to zero, and hence the expression in parenthesis above approaches one. In particular, it follows that ${\rm pr}_i(\rv(\xi_n))\ge 0$ for large even $n$. In addition,
\begin{equation}\label{infty}
  \lim_{n\to\infty}\frac{{\rm pr}_i(\rv(\xi_{2n}))}{{\rm pr}_1(\rv(\xi_{2n}))}  \ge \lim_{n\to\infty}\frac{y_{2n}}{x_{2n}+\varepsilon_{2n}}=+\infty.
\end{equation}
Similarly, when $n$ is large and odd for $i\ge 2$ we have ${\rm pr}_i(\rv(\xi_n))<0$ and
\begin{equation}\label{-infty}
  \lim_{n\to\infty}\frac{{\rm pr}_i(\rv(\xi_{2n+1}))}{{\rm pr}_1(\rv(\xi_{2n+1}))}=-\infty.
\end{equation}

Next, we show that $\R(\Phi)$ has a unique supporting hyperplane at the origin. We denote the direction vector $(-1,0,...,0)$ by $\alpha$ and the origin by $w_0$. Note that $w_0\in\R(\Phi)$. Indeed, since $\Phi|_Y\equiv 0$, for any invariant measure $\mu$ supported on $Y$ we have $\rv(\mu)=\int\Phi\,d\mu=0$. Moreover, for any $w\in\R(\Phi)$ we take $\mu\in\cM_\Phi(w)$ and get
$$\alpha\cdot(w-w_0)=-{\rm pr}_1(w)=-\int\phi_1\,d\mu=-\int_X dist(\xi,Y)\,d\mu(\xi)\le 0.$$
Therefore, the hyperplane through $w_0$ orthogonal to $\alpha$  is a supporting hyperplane for $\R(\Phi)$ at $w_0$. Suppose there is another supporting hyperplane at $w_0$. Denote its normal vector by $\beta=(\beta_1,...,\beta_m)$.
Let $i\in \{2,3,...,m\}$ be such that $\beta_i\ne 0$; we may assume $\beta_i>0$. It follows from (\ref{infty}) and (\ref{-infty}) that there is an even natural $n_1$ and an odd natural $n_2$ such that $$(i-2)\equiv n_1 \mod (m-1)\equiv n_2 \mod (m-1)$$ and
\begin{itemize}
  \item $\beta_1{\rm pr}_1(\rv(\xi_{n_1}))+\beta_i{\rm pr}_i(\rv(\xi_{n_1}))>0$,
  \item $\beta_1{\rm pr}_1(\rv(\xi_{n_2}))+\beta_i{\rm pr}_i(\rv(\xi_{n_2}))<0$.
\end{itemize}
By (\ref{def_phi_i}) for any $j\in\{2,...,m\}\setminus\{i\}$ we have ${\rm pr}_j(\rv(\xi_{n_1}))={\rm pr}_j(\rv(\xi_{n_1}))=0$. Therefore, $\rv(\xi_{n_1})\cdot \beta>0 $ whereas $ \rv(\xi_{n_2})\cdot \beta<0 $ and hence the hyperplane with normal vector $\beta$ cuts $\R(\Phi)$. We conclude, that $\{u\in\bR^m:  \alpha\cdot u=0\}$ is the only supporting hyperplane of $\R(\Phi)$ at the origin.

Lastly, we show that the origin is an exposed point of $\R(\Phi)$. If a point $w\in \R(\Phi)$ is on the supporting hyperplane, then for some $\mu\in \cM_\Phi(w)$ $w=(\int\phi_1(\xi)\,d\mu,\int\phi_2(\xi)\,d\mu,...,\int\phi_m(\xi)\,d\mu )$  and the first coordinate of $w$ is zero. Therefore, for $\mu$-almost any $\xi$ we have $dist(\xi,Y)=\phi_1(\xi)=0$ and $\xi\in Y$. Since $\phi_i|_Y\equiv 0$ for i=2,...,m, we obtain that $w$ must be the origin.

For $\alpha=(-1,0,...,0)$ the potential $\alpha\cdot \Phi=\varphi_Y$. Therefore, the corresponding sequence of equilibrium states has $k$ weak$^*$ accumulation points $\mu_\alpha^1,...,\mu_\alpha^k$. Theorem \ref{thm2} now asserts that $\rv(\mu_\alpha^1)=...=\rv(\mu_\alpha^k)=w_0$.
\end{proof}

In \cite{ChH} Chazottes and Hochman construct a subshift $Y$ of the full shift $X=\{0,1\}^\bN$ with the following property: For the Lipschitz continuous potential $\varphi(\xi)=-dist(\xi,Y)$  the set $GS(\varphi_Y)$ contains two distinct ergodic invariant measures. Combining this result with Theorem \ref{thm3} yields the following.
\begin{corollary}\label{cor22}
Let $f:X\to X$ be the one-sided shift map over the alphabet $\{0,1\}$. Then for every $m\in \bN$ there exists a continuous potential $\Phi:X\to\bR^m$ such that  $\R(\Phi)$ has the origin as an exposed point, $\gamma(0)$ contains only one direction vector $\alpha$ and $GS(\alpha)$ contains two distinct ergodic ground states in the direction of $\alpha$.
\end{corollary}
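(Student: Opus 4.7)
The plan is to derive the corollary directly from Theorem~\ref{thm3} applied to the Chazottes--Hochman example. First, I would invoke the Chazottes--Hochman construction on $X=\{0,1\}^\bN$ to obtain a closed $f$-invariant subshift $Y\subset X$ such that the Lipschitz potential $\varphi_Y(\xi)=\dist(\xi,Y)$ satisfies the hypotheses of Theorem~\ref{thm3}: the potential $t\varphi_Y$ admits a unique equilibrium state for every $t\ge 0$, and its ground state set contains exactly $k=2$ distinct ergodic measures.

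I would then check the remaining standing hypotheses of Theorem~\ref{thm3} for the one-sided full shift on two symbols, namely that the entropy map on $\cM$ is upper semi-continuous (in fact continuous for the full shift) and that periodic point measures are dense in $\cM$, which follows from the specification property of the full shift. With these verifications in place, Theorem~\ref{thm3} supplies, for each fixed $m\in\bN$, a continuous potential $\Phi:X\to\bR^m$ with $\mathrm{pr}_1(\Phi)=\varphi_Y$ such that $\R(\Phi)$ has a smooth exposed point coinciding with the rotation vectors of two distinct ergodic ground states.

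The final step is to pin down this exposed point and the associated direction vector by reading off the proof of Theorem~\ref{thm3}: the exposed point is the origin $w_0=0\in\bR^m$, and $\gamma(w_0)$ consists of the single direction vector $\alpha=(-1,0,\dots,0)$. Since $\alpha\cdot\Phi=-\varphi_Y$ agrees up to sign with the Chazottes--Hochman potential, its two distinct ergodic ground states are precisely two distinct ergodic elements of $GS(\alpha)$, which is the content of the corollary.

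The substantive work has been absorbed into Theorem~\ref{thm3} and into the Chazottes--Hochman construction, so the main obstacle here is purely bookkeeping: one must track the sign convention carefully, so that $\alpha\cdot\Phi$ (rather than $-\alpha\cdot\Phi$) is the one-dimensional potential whose ground states realize the two distinct ergodic measures, and therefore the non-uniqueness genuinely lies in $GS(\alpha)$ as opposed to some other direction.
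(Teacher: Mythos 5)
Your proposal is correct and is essentially the paper's own argument: the paper proves this corollary by citing the Chazottes--Hochman subshift $Y\subset\{0,1\}^{\bN}$ and invoking Theorem~\ref{thm3} with $k=2$, exactly as you do. Your extra care with the sign convention is warranted (the proof of Theorem~\ref{thm3} asserts $\alpha\cdot\Phi=\varphi_Y$ for $\alpha=(-1,0,\dots,0)$ when it is really $-\varphi_Y=-\dist(\cdot,Y)$, i.e.\ precisely the Chazottes--Hochman potential), and your verification of the standing hypotheses for the full shift is a correct, if routine, addition.
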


\begin{remark}
The measures constructed in \cite{ChH} have zero entropy. However, one can easily obtain measures with positive entropy by considering products with a full shift.
The example in \cite{ChH} can be generalized so that $GS(\varphi_Y)$ contains any finite number of distinct ergodic invariant measures. We refer to \cite[Section 4]{ChH} for more details.

\end{remark}

On the other hand, it is possible to have a zero temperature measure at an exposed point that is non-ergodic. The corresponding example is given in Proposition \ref{non-ergodic} in the next section.

\section{Non-exposed points.}

The next example addresses the case when the boundary of a rotation set is not strictly convex. As we mentioned in Section 4, it is not difficult to construct examples where a non-extreme boundary point does not correspond to a ground state. Here we show that it may also happen at an extreme (non-exposed) boundary point.

\begin{example}\label{ex1}
Let $f:X\to X$ be the one-sided full shift  with alphabet $\{0,1,2,3\}$. For a real number $a>0$ and $i\in\{1,2\}$ let $\ell_i:[0,a]\to \bR$ be continuous functions such that $\ell_1$ is non-negative, increasing and strictly concave and $\ell_2$ is non-positive, decreasing and strictly convex. For $i=1,2$ we pick an exponentially decreasing to zero sequence $(x_i(k))_{k\in \bN}\subset (0,a)$. We denote the points on the graphs of $\ell_i$ corresponding to $x_i(k)$ by $v_i(k)=(x_i(k),\ell_i(x_i(k)))$. Also let $w_i(\infty)=(0,\ell_i(0))$ and $w(0)=(a,0)$. We refer to Figure \ref{DefPhi}.

Next, we define several subsets of $X$.
Let $S_1=\{0,1\}, S_{2}=\{2,3\}$ and fix $ \lambda\in \bN, \lambda\geq 3$.
For $i=1,2$ and all $k\geq \lambda$ we define $Y_i(k)=\{\xi \in X: \xi_1,\ldots, \xi_{k}\in S_i\}$ and $Y_i(\infty)=\{\xi \in X: \xi_j\in S_i\,\,\text{for all}\,\, j\}$. Note that  $Y_i(\infty)$ is a full shift on two symbols. Moreover, let $Y_0(\lambda)= X\setminus (Y_1(\lambda)\cup  Y_{2}(\lambda))$.
\\[0.2cm]
\noindent
Finally, we define a potential $\Phi: X\to \bR^2$ by
\begin{equation}\label{defpotphi}
\Phi(\xi)=\begin{cases}
w(0)\qquad & {\rm if}\,\,   \xi\in Y_0(\lambda)\\
                         v_{i}(k-\lambda) & {\rm if}\,\,  \xi\in Y_{i}(k-1)\ {\rm and}\ \xi\not\in Y_i(k),\, k> \lambda, \,i\in \{1,2\}\\
                          w_i(\infty) & {\rm if}\,\,   \  \xi\in Y_{i}(\infty),\,i\in \{1,2\}
            \end{cases}
\end{equation}
\end{example}

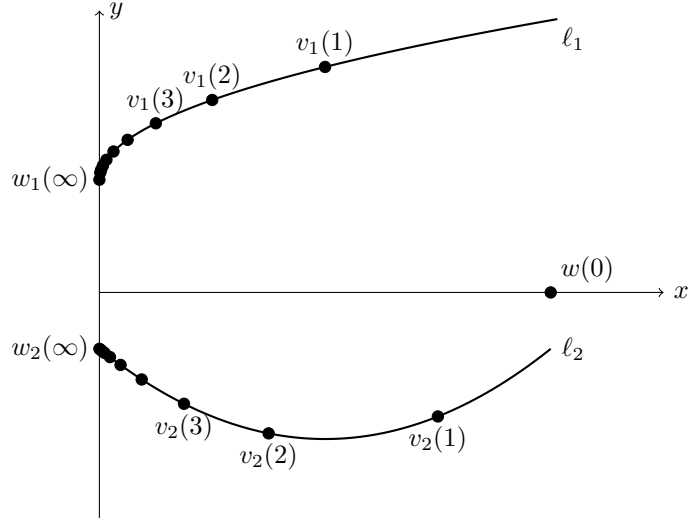
\begin{figure}[h]
\begin{center}
\begin{tikzpicture} [scale = 0.75]
\draw [color = black,->](0,0) --(10,0)node[right] {\small $x$};
\draw [color = black,->](0,-4) --(0,5)node[right] {\small $y$};
\draw [thick, domain = 2:4.85, samples=400] plot ( {(\x-2)^2}, {\x});
\draw [thick, domain = 0:8, samples=400] plot ( {\x}, {0.1*\x*(\x-8)-1});
{\draw [color = black,fill = black](0,2)  circle (.1) node[left] {\small $w_1(\infty)$};}
{\draw [color = black,fill = black](0,-1)  circle (.1) node[left] {\small $w_2(\infty)$};}
\draw [color = black,fill = black](8,0)  circle (.1) node[above right] {\small $w(0)$};
{\foreach \n in {1,...,3} \draw [color = black,fill = black]({8/(2^\n)},{2.828/(1.414)^\n+2})circle(.1) node[above] {\small $v_1(\n)$};
\foreach \n in {4,...,9} \draw [color = black,fill = black]({8/(2^\n)},{2.828/(1.414)^\n+2})  circle (.1);}
{\foreach \n in {1,...,3} \draw [color = black,fill = black]({12/(2^\n)},{0.1*12/(2^\n)*(12/(2^\n)-8)-1})circle(.1) node[below] {\small $v_2(\n)$};
\foreach \n in {4,...,9} \draw [color = black,fill = black]({12/(2^\n)},{0.1*12/(2^\n)*(12/(2^\n)-8)-1})circle(.1);}
\draw [color = black] (8.4,4.5)node{\small $\ell_1$};
\draw [color = black] (8.4,-1)node{\small $\ell_2$};
\end{tikzpicture}
\caption{ Construction of $\Phi$.}
\label{DefPhi}
\end{center}
\end{figure}

First we establish the continuity of $\Phi$.

\begin{lemma}\label{phi_continuity}
The potential $\Phi$ defined in Example \ref{ex1} is continuous.
\end{lemma}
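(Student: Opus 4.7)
The plan is to partition $X$ into three types of sets, show that $\Phi$ is locally constant outside a small closed set, and then verify continuity at the remaining points by a direct cylinder-convergence argument.

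First, I would observe that for any $k\ge\lambda$ and $i\in\{1,2\}$, the set $Y_i(k)$ depends only on the first $k$ coordinates, so it is a finite union of cylinder sets and therefore clopen. Consequently $Y_0(\lambda)=X\setminus(Y_1(\lambda)\cup Y_2(\lambda))$ is clopen, and each ``shell'' $Y_i(k-1)\setminus Y_i(k)$ for $k>\lambda$ is clopen as well. On each shell $\Phi$ takes the constant value $v_i(k-\lambda)$, and on $Y_0(\lambda)$ it takes the constant value $w(0)$. Thus $\Phi$ is locally constant (hence continuous) on the open set
\[
U\;=\;Y_0(\lambda)\;\cup\;\bigcup_{i\in\{1,2\}}\bigcup_{k>\lambda}\bigl(Y_i(k-1)\setminus Y_i(k)\bigr)\;=\;X\setminus\bigl(Y_1(\infty)\cup Y_2(\infty)\bigr).
\]

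The remaining task is to verify continuity at each $\xi\in Y_i(\infty)$, where $\Phi(\xi)=w_i(\infty)=(0,\ell_i(0))$. Given $\varepsilon>0$, the continuity of $\ell_i$ together with $x_i(k)\downarrow 0$ implies that $v_i(k)=(x_i(k),\ell_i(x_i(k)))\to(0,\ell_i(0))=w_i(\infty)$, so there exists $N\ge\lambda$ such that $\|v_i(k-\lambda)-w_i(\infty)\|<\varepsilon$ for every $k-\lambda\ge N-\lambda$. Because the topology on $X$ is generated by agreement on initial segments, the cylinder $[\xi_1\cdots\xi_N]$ is an open neighborhood of $\xi$, and every $\eta$ in this cylinder satisfies $\eta\in Y_i(N)$. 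For such $\eta$ there are only two possibilities: either $\eta\in Y_i(\infty)$, in which case $\Phi(\eta)=w_i(\infty)=\Phi(\xi)$; or else $\eta\in Y_i(k-1)\setminus Y_i(k)$ for some $k\ge N+1>\lambda$, in which case $\Phi(\eta)=v_i(k-\lambda)$ with $k-\lambda\ge N-\lambda$ and hence $\|\Phi(\eta)-\Phi(\xi)\|<\varepsilon$. This establishes continuity at $\xi$.

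The argument is essentially a matter of bookkeeping once the clopen structure of the $Y_i(k)$ is noted; there is no real obstacle. The only place where one has to be a little careful is in handling the ``mixed'' possibility that a sequence $\eta^{(n)}\to\xi$ may alternate between points of $Y_i(\infty)$ and points of various shells $Y_i(k_n-1)\setminus Y_i(k_n)$. The uniform bound above handles both cases simultaneously, because agreement with $\xi$ on the first $N$ coordinates forces $\eta^{(n)}\in Y_i(N)$, which in turn forces the shell index $k_n$ (when $\eta^{(n)}\notin Y_i(\infty)$) to satisfy $k_n\ge N+1$, so that $v_i(k_n-\lambda)$ is uniformly close to $w_i(\infty)$.
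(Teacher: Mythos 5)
Your proof is correct and follows essentially the same route as the paper's: local constancy of $\Phi$ away from $Y_1(\infty)\cup Y_2(\infty)$ (which the paper phrases via convergent sequences rather than clopen shells), combined with the convergence $v_i(k)\to w_i(\infty)$ to handle points of $Y_i(\infty)$. The only difference is presentational (neighborhoods versus sequences), and your version has the minor merit of explicitly covering the case $\Phi(\xi)=w(0)$, which the paper's proof leaves implicit.
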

\begin{proof}
Let $(\xi^n)$ be a convergent sequence in $X$ and $\xi=\lim \xi^n$. If $\Phi(\xi)=v_i(k)$ for some $i\in\{1,2\}$ and $k\in\bN$, then it follows immediately from (\ref{defpotphi}) that $\Phi(\xi^n)=v_i(k)$ for sufficiently large $n$.

Now suppose $\Phi(\xi)=w_i(\infty)$ for some $i\in\{1,2\}$. The continuity of $\ell_i$ implies that for a given $\varepsilon>0$ there is $k_0$ such that $v_i(k)\in B_\varepsilon(w_i(\infty))$ for all $k>k_0$. Since $\xi^n\to \xi$ we have that for $n$ large enough $\xi_1^n=\xi_1,\xi_2^n=\xi_2,\,...,\,\xi_{k_0}^n=\xi_{k_0}$. It follows that either $\xi^n\in Y_i(\infty)$ or $\xi^n\in Y_i(k) $ for some $k>k_0$. If the former is true, $\Phi(\xi^n)=w_i(\infty)$. Otherwise,  $\Phi(\xi^n)=v_i(k)$ for $k>k_0$ and hence $\Phi(\xi^n)\in B_\varepsilon(w_i(\infty))$.
\end{proof}

We denote by $w_i(j)$ the rotation vectors of the periodic orbits of length $j$ whose generators have the first $j-1$ coordinates in  $S_i$ and the $j^{\text{th}}$ coordinate in the complementary alphabet $S_{3-i}$. Precisely, for $j>\lambda$ and $i=1,2$ we have

\begin{equation}\label{wj} w_i(j)=\frac{\sum\limits_{k=1}^{j-\lambda}v_i(k)+\lambda w(0)}{j}.\end{equation}

\begin{lemma}\label{wj monotonic}
There is $j_0\ge \lambda$ such that the sequence of points $\{w_1(j)\}_{j>j_0}$ monotonically decreases to $w_1(\infty)$ and the sequence $\{w_2(j)\}_{j>j_0}$  monotonically increases to $w_2(\infty)$.
\end{lemma}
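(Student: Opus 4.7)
The proof rests on a telescoping identity read off directly from \eqref{wj}. Setting $S_i(j) \eqdef \sum_{k=1}^{j-\lambda} v_i(k) + \lambda w(0)$, so that $w_i(j) = S_i(j)/j$ and $S_i(j+1) = S_i(j) + v_i(j+1-\lambda)$, a one-line computation gives
\[
w_i(j+1) - w_i(j) \;=\; \frac{v_i(j+1-\lambda) - w_i(j)}{j+1}.
\]
Hence each increment has the direction of $v_i(j+1-\lambda) - w_i(j)$; since $v_i(k) \to w_i(\infty)$ as $k \to \infty$ (because $x_i(k) \to 0$ and $\ell_i$ is continuous), the heuristic picture is that the successive points $w_i(j)$ are pulled toward $w_i(\infty)$ for large $j$.

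To promote this to a quantitative statement, I would establish the expansion
\[
j\bigl(w_i(j) - w_i(\infty)\bigr) \;=\; \lambda\bigl(w(0) - w_i(\infty)\bigr) + \sum_{k=1}^{j-\lambda}\bigl(v_i(k) - w_i(\infty)\bigr).
\]
The right-hand side converges to
\[
U_i \;\eqdef\; \lambda\bigl(w(0) - w_i(\infty)\bigr) + \sum_{k=1}^{\infty}\bigl(v_i(k) - w_i(\infty)\bigr)
\]
with remainder $R_j \eqdef \sum_{k>j-\lambda}(v_i(k) - w_i(\infty))$ satisfying $\|R_j\| = O(r^j)$ for some $r \in (0,1)$, thanks to the exponential decay of $(x_i(k))$ (and continuity of $\ell_i$, which makes $\|v_i(k) - w_i(\infty)\|$ summable with exponential tail). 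Therefore $w_i(j) = w_i(\infty) + (U_i - R_j)/j$, and differencing yields
\[
w_i(j+1) - w_i(j) \;=\; -\frac{U_i}{j(j+1)} + O(r^j),
\]
so for $j$ past an explicit threshold $j_0$ the $1/j^2$ term dominates and the increment is essentially parallel to $-U_i$.

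The required monotonicity then follows by reading off the sign of the relevant coordinate of $U_i$. For $i=1$, the vector $w(0) - w_1(\infty) = (a, -\ell_1(0))$ has positive first coordinate, and each $v_1(k) - w_1(\infty)$ has positive first coordinate since $x_1(k) > 0$; hence ${\rm pr}_1(U_1) > 0$, which forces ${\rm pr}_1(w_1(j) - w_1(\infty))$ to be strictly positive and strictly decreasing for $j \geq j_0$, which is the sense in which $\{w_1(j)\}$ monotonically decreases to $w_1(\infty)$ along the upper boundary of $\R(\Phi)$. The symmetric analysis with $\ell_2$ non-positive, decreasing, and strictly convex puts $w_2(j)$ on the opposite side of the boundary, giving the reversed, "monotonically increasing'' approach to $w_2(\infty)$ along the lower boundary. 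The main obstacle is simply the remainder bookkeeping: one must check that the $O(r^j)$ error in the increment cannot override the leading $-U_i/(j(j+1))$ term past $j_0$, which is immediate from the fact that $r^j$ eventually beats any inverse polynomial.
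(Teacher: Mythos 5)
Your telescoping identity $w_i(j+1)-w_i(j)=\bigl(v_i(j+1-\lambda)-w_i(j)\bigr)/(j+1)$ is correct and is in substance the same starting point as the paper, which computes $w_i(j)-w_i(j+1)=\frac{1}{j(j+1)}\bigl[\lambda w(0)+\sum_{k=1}^{j-\lambda}v_i(k)-jv_i(j+1-\lambda)\bigr]$. The gap is in how you determine the sign of the increment. You pass to the limit vector $U_i=\lambda(w(0)-w_i(\infty))+\sum_{k=1}^{\infty}(v_i(k)-w_i(\infty))$ and claim the tail is $O(r^j)$ "thanks to the exponential decay of $(x_i(k))$ and continuity of $\ell_i$." But only the \emph{first} coordinate of $v_i(k)-w_i(\infty)$, namely $x_i(k)$, decays exponentially; the second coordinate is $\ell_i(x_i(k))-\ell_i(0)$, and mere continuity of $\ell_i$ gives no rate at all. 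In the paper's actual instances of this construction (Propositions \ref{extreme_pts} and \ref{non-ergodic}) one has $\ell(x)=c-1/\ln x$ and $x_i(k)=e^{7-10k}$, so $\ell_i(x_i(k))-\ell_i(0)=\pm 1/(10k-7)$ and the series $\sum_k(v_i(k)-w_i(\infty))$ \emph{diverges} in its second coordinate — indeed the paper exploits exactly this divergence to make $w_1(\infty)$ non-exposed. So $U_i$ need not exist, the expansion $w_i(j)=w_i(\infty)+(U_i-R_j)/j$ breaks down, and your final step (reading off the sign of a coordinate of $U_i$) cannot be carried out for the second coordinate. Your argument does prove monotonicity of the first coordinates, but the lemma (and its use in Proposition \ref{prop1}, where the vertices of the infinite polygon must be ordered in both coordinates) requires both.

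The paper avoids this by never summing the centered series: it bounds $\lambda w(0)+\sum_{k=1}^{j-\lambda}v_i(k)-jv_i(j+1-\lambda)$ coordinatewise directly, using only that $(x_i(k))$ is decreasing with $a>x_i(k)$ for the first coordinate, and the elementary estimate $\sum_{k=1}^{j-\lambda}\ell_1(x_1(k))-j\ell_1(x_1(j+1-\lambda))\ge \ell_1(x_1(1))-(\lambda+1)\ell_1(x_1(j+1-\lambda))$ for the second, which is eventually positive. To repair your proof you should replace the appeal to $U_i$ by such a direct coordinatewise comparison of $\sum_{k=1}^{j-\lambda}v_i(k)$ with $j\,v_i(j+1-\lambda)$ (or, equivalently, of $w_i(j)$ with $v_i(j+1-\lambda)$), which uses only monotonicity of $k\mapsto \ell_i(x_i(k))$ and needs no summability.
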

\begin{proof}
It follows from (\ref{wj}) that for any $j>\lambda$ and $i\in\{1,2\}$ we have
\begin{align}
w_i(j)-w_i(j+1)
&=\frac{1}{j(j+1)}\left[\lambda w(0)+\sum\limits_{k=1}^{j-\lambda}v_i(k)-jv_i(j+1-\lambda)\right].
\end{align}
The first coordinate of $w_i(j)-w_i(j+1)$ is always positive, since the $x_i(k)$ are decreasing and $w(0)=(a,0)$ with $a>x_i(j+1-\lambda)$. The second coordinate of ${w_i(j)-w_i(j+1)}$ simplifies to
\begin{equation}\label{y-coord wj}
\sum_{k=1}^{j-\lambda}\ell_i(x_i(k))-j\ell_i(x_i(j+1-\lambda)).
\end{equation}
For $i=1$ this expression is positive whenever $\ell_1(x_1(1))>(\lambda+1)\ell_1(x_1(j+1-\lambda))$. This can always be achieved starting from some $j_1$ since $\ell_1(x_1(k))$ is a decreasing sequence. Therefore, $w_1(j)$ are decreasing for $j>j_1$. Similarly, $w_2(j)$ are increasing for $j>j_2$ where $j_2$ is such that $\ell_2(x_2(1))<(\lambda+1)\ell_2(x_2(j_2+1-\lambda))$. Letting $j_0=\max\{j_1,j_2\}$ completes the proof.

\end{proof}

Next we show that the boundary of $\R(\Phi)$ is an infinite polygon (see Figure \ref{RotPhi}). Moreover, there is a neighborhood of the segment $[w_2(\infty),w_1(\infty)]$ where the vertices of $\R(\Phi)$ are exactly $w_i(j)$, $i=1,2$ and $j>j_0$ for some integer $j_0$ which depends on properties of the functions $\ell_i$. We prove this fact in the next proposition, where for simplicity we add an additional assumption on $\ell_i$ that guaranties that we may take $j_0=\lambda$.

\begin{proposition}\label{prop1} Let $\Phi$ be the potential defined in Example \ref{ex1} where, in addition, we have $\sum_{k=1}^\infty x_i(k)<a$ and $(-1)^i\ell_i(x_i(1))<(-1)^i(\lambda+1)\ell_i(x_i(2))$ for $i\in\{1,2\}$. Let $w_i(j)$ be as in (\ref{wj}) for $j>\lambda$ and set\\ $w_i(\lambda)=\frac{1}{3\lambda}[3(\lambda-1)w(0)+(3-i)v_i(1)+v_{3-i}(1)]$. Then \begin{equation}
\R(\Phi)=\overline{\rm Conv}\{w(0),w_i(j): j\ge\lambda,\,i=1,2\}.
\end{equation}
\end{proposition}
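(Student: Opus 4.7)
The plan is to prove the two containments $\R(\Phi)\supseteq\overline{\rm Conv}\{\ldots\}$ and $\R(\Phi)\subseteq\overline{\rm Conv}\{\ldots\}$ separately.

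\textbf{Forward inclusion.} Since $\R(\Phi)$ is compact and convex, it is enough to check that each listed vertex lies in $\R(\Phi)$. The point $w(0)$ is the rotation vector of $\xi=\overline{012}$: with $\lambda\ge 3$, every length-$\lambda$ window on the orbit contains symbols from both $S_1$ and $S_2$, so every shift of $\xi$ is in $Y_0(\lambda)$, $\Phi\equiv w(0)$ along the orbit, and $\rv(\mu_\xi)=w(0)$. For $j>\lambda$, the periodic orbit whose generator is $s_1\cdots s_{j-1}s_j$ with $s_1,\ldots,s_{j-1}\in S_i$ and $s_j\in S_{3-i}$ realises $\rv=w_i(j)$ after a direct shift-by-shift evaluation of $\Phi$, which is precisely the content of formula (\ref{wj}). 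Realising $w_i(\lambda)$ requires more care, since (thanks to the alternation constraint on runs) no single periodic orbit has the required counts; I would produce it as the rotation vector of a non-ergodic invariant measure built from a suitable convex combination of periodic-orbit measures (or as a weak$^*$ limit of such), with weights arranged so that the ratio $(3-i):1:3(\lambda-1)$ among $v_i(1)$, $v_{3-i}(1)$, $w(0)$ in its definition is reproduced.

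\textbf{Reverse inclusion.} For the full shift the periodic-orbit measures are weak$^*$ dense in $\cM$, so with $\rv$ continuous and affine one has $\R(\Phi)=\overline{\rm conv}\{\rv(\mu_\eta):\eta\in\Per(f)\}$; it therefore suffices to show that every periodic rotation vector lies in the target convex hull. Fix $\eta\in\Per(f)$ of period $N$ and decompose its generator into maximal alternating runs in $S_1,S_2$ of lengths $r_1,\ldots,r_b$ (so $b$ is even, by the wraparound constraint). A shift-by-shift count of $\Phi$ along the orbit gives
\[
N\,\rv(\mu_\eta)=\sum_{j:\,r_j\ge\lambda}\Bigl(\sum_{k=1}^{r_j-\lambda+1}v_{i_j}(k)+(\lambda-1)w(0)\Bigr)+\sum_{j:\,r_j<\lambda}r_j\,w(0).
\]
Using the identity $(r_j+1)\,w_{i_j}(r_j+1)=\lambda w(0)+\sum_{k=1}^{r_j+1-\lambda}v_{i_j}(k)$, which is just (\ref{wj}) rearranged, one rewrites the right-hand side as a linear combination of $w(0)$ and of the listed vertices $w_{i_j}(r_j+1)$ (all with index $\ge\lambda+1$) whose coefficients sum to $N$. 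Whenever the total length of short runs is at least the number of long runs, this is already a convex combination and the inclusion follows.

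\textbf{Main obstacle.} The difficulty is the remaining case, in which long runs of length exactly $\lambda$ outnumber what the short runs can absorb, and the naive decomposition assigns a negative coefficient to $w(0)$; this is precisely where the extra vertex $w_i(\lambda)$ enters the argument. The key algebraic observation is that a pair of adjacent long runs of length $\lambda$, one in $S_1$ and one in $S_2$, contributes per position the midpoint $\tfrac12(w_1(\lambda)+w_2(\lambda))$, which equals $\rv(\mu_{\overline{0^\lambda 2^\lambda}})$. By pairing up such length-$\lambda$ runs and accounting for them with $w_i(\lambda)$ rather than $w_i(\lambda+1)$, the deficit in the $w(0)$-coefficient is exactly absorbed and the decomposition becomes a genuine convex combination of vertices from the listed family. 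The extra hypotheses $\sum_k x_i(k)<a$ and $(-1)^i\ell_i(x_i(1))<(-1)^i(\lambda+1)\ell_i(x_i(2))$ enter through Lemma~\ref{wj monotonic} (applied with $j_0=\lambda$) to guarantee that all the $w_i(j)$, $j\ge\lambda$, sit in convex position, so that the resulting boundary is genuinely the infinite polygon claimed.
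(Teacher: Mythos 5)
Your framework (reduce to periodic orbits via the density of periodic-orbit measures, decompose the generator into maximal runs, and tally each run's contribution to the Birkhoff sum of $\Phi$) matches the paper's, and your run-contribution formula and the identity $(r_j+1)\,w_{i_j}(r_j+1)=\lambda w(0)+\sum_{k=1}^{r_j+1-\lambda}v_{i_j}(k)$ are correct. The gap is in your ``main obstacle'' step. Your rewriting gives
\[
N\,\rv(\mu_\eta)=\sum_{\text{long }j}(r_j+1)\,w_{i_j}(r_j+1)+\Bigl(\sum_{\text{short }j}r_j-\#\{\text{long runs}\}\Bigr)\,w(0),
\]
so the coefficient of $w(0)$ is negative for any orbit with few or no short runs; e.g.\ $\eta=\overline{0^{n_1}2^{n_2}}$ with $n_1,n_2>\lambda$ has two long runs, no short runs, and deficit $2$. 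Your repair only addresses adjacent runs of length \emph{exactly} $\lambda$ (where the midpoint identity $\tfrac12(w_1(\lambda)+w_2(\lambda))=\rv(\mu_{\overline{0^\lambda 2^\lambda}})$ does hold); it does nothing for long runs of length $>\lambda$, and these are precisely the orbits whose rotation vectors approach the critical left-hand part of the boundary near $[w_2(\infty),w_1(\infty)]$. In that regime $\rv(\mu_\eta)$ is a point of the hull that is \emph{not} a nonnegative combination of the listed vertices with the natural run-based weights, so no exact algebraic identity can close the argument; some inequality is unavoidable.

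That inequality argument is where the paper does its real work and where the extra hypotheses actually enter: for generators made of two or three long runs it sandwiches $\rv(\xi)$ coordinate by coordinate between $w_1(n)$ and combinations such as $\frac{n_1}{n}w_1(n_1)+\frac{n_2}{n}w_1(n_2)$, using $\sum_k x_i(k)<a$ for the first coordinates and $(-1)^i\ell_i(x_i(1))<(-1)^i(\lambda+1)\ell_i(x_i(2))$ together with the signs and monotonicity of the $\ell_i$ for the second. You attribute these hypotheses only to Lemma \ref{wj monotonic} and the convex position of the $w_i(j)$, which is only part of their role; without a substitute for these estimates your reverse inclusion is unproved exactly in the cases that matter. (Your forward inclusion is fine for $w(0)$ and for $w_i(j)$ with $j>\lambda$; for $w_i(\lambda)$ you correctly flag that a convex combination of periodic-orbit measures must be exhibited, and this still needs to be done explicitly.)
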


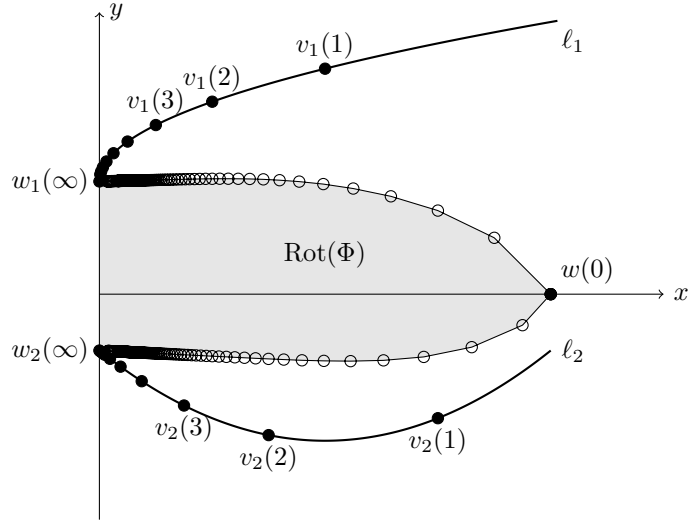
\begin{figure}[h]
\begin{center}
\begin{tikzpicture} [scale = 0.75]
\fill[fill=black!10] plot  (8,0)circle (.1)--
(7,1) --
(6,1.483) --
(5.167,1.736) --
(4.5,1.874) --
(3.969,1.953) --
(3.542,1.997) --
(3.194,2.022) --
(2.906,2.037) --
(2.665,2.044) --
(2.461,2.047) --
(2.285,2.048) --
(2.133,2.048) --
(2,2.047) --
(1.882,2.046) --
(1.778,2.044) --
(1.684,2.042) --
(1.6,2.04) --
(1.524,2.039) --
(1.455,2.037) --
(1.391,2.036) --
(1.333,2.034) --
(1.28,2.033) --
(1.231,2.032) --
(1.185,2.031) --
(1.143,2.03) --
(1.103,2.029) --
(1.067,2.028) --
(1.032,2.027) --
(1,2.026) --
(0.97,2.025) --
(0.941,2.024) --
(0.914,2.024) --
(0.889,2.023) --
(0.865,2.022) --
(0.842,2.022) --
(0.821,2.021) --
(0.8,2.021) --
(0.78,2.02) --
(0.762,2.02) --
(0.744,2.019) --
(0.727,2.019) --
(0.711,2.018) --
(0.696,2.018) --
(0.681,2.018) --
(0.667,2.017) --
(0.653,2.017) --
(0.64,2.017) --
(0.627,2.016) --
(0.615,2.016) --
(0.604,2.016) --
(0.593,2.015) --
(0.582,2.015) --
(0.571,2.015) --
(0.561,2.015) --
(0.552,2.014) --
(0.542,2.014) --
(0.533,2.014) --
(0.525,2.014) --
(0.516,2.013) --
(0.508,2.013) --
(0.5,2.013) --
(0.492,2.013) --
(0.485,2.013) --
(0.478,2.012) --
(0.471,2.012) --
(0.464,2.012) --
(0.457,2.012) --
(0.451,2.012) --
(0.444,2.012) --
(0.438,2.011) --
(0.432,2.011) --
(0.427,2.011) --
(0.421,2.011) --
(0.416,2.011) --
(0.41,2.011) --
(0.405,2.01) --
(0.4,2.01) --
(0.395,2.01) --
(0.39,2.01) --
(0.386,2.01) --
(0.381,2.01) --
(0.376,2.01) --
(0.372,2.01) --
(0.368,2.01) --
(0.364,2.009) --
(0.36,2.009) --
(0.356,2.009) --
(0.352,2.009) --
(0.348,2.009) --
(0.344,2.009) --
(0.34,2.009) --
(0.337,2.009) --
(0.333,2.009) --
(0.33,2.009) --
(0.327,2.008) --
(0.323,2.008) --
(0.32,2.008) --
(0.291,2.008) --
(0.267,2.007) --
(0.246,2.006) --
(0.229,2.006) --
(0.213,2.006) --
(0.2,2.005) --
(0.188,2.005) --
(0.178,2.005) --
(0.168,2.004) --
(0.16,2.004) --
(0.152,2.004) --
(0.145,2.004) --
(0.139,2.004) --
(0.133,2.003) --
(0,2)--
(0,0)--
(8,0) --
(7.5,-0.55) --
(6.6,-0.94) --
(5.75,-1.113) --
(5.036,-1.174) --
(4.453,-1.188) --
(3.979,-1.183) --
(3.591,-1.173) --
(3.268,-1.16) --
(2.998,-1.148) --
(2.768,-1.138) --
(2.571,-1.128) --
(2.4,-1.12) --
(2.25,-1.112) --
(2.118,-1.106) --
(2,-1.1) --
(1.895,-1.095) --
(1.8,-1.09) --
(1.714,-1.086) --
(1.636,-1.082) --
(1.565,-1.078) --
(1.5,-1.075) --
(1.44,-1.072) --
(1.385,-1.069) --
(1.333,-1.067) --
(1.286,-1.064) --
(1.241,-1.062) --
(1.2,-1.06) --
(1.161,-1.058) --
(1.125,-1.056) --
(1.091,-1.055) --
(1.059,-1.053) --
(1.029,-1.051) --
(1,-1.05) --
(0.973,-1.049) --
(0.947,-1.047) --
(0.923,-1.046) --
(0.9,-1.045) --
(0.878,-1.044) --
(0.857,-1.043) --
(0.837,-1.042) --
(0.818,-1.041) --
(0.8,-1.04) --
(0.783,-1.039) --
(0.766,-1.038) --
(0.75,-1.037) --
(0.735,-1.037) --
(0.72,-1.036) --
(0.706,-1.035) --
(0.692,-1.035) --
(0.679,-1.034) --
(0.667,-1.033) --
(0.655,-1.033) --
(0.643,-1.032) --
(0.632,-1.032) --
(0.621,-1.031) --
(0.61,-1.031) --
(0.6,-1.03) --
(0.59,-1.03) --
(0.581,-1.029) --
(0.571,-1.029) --
(0.563,-1.028) --
(0.554,-1.028) --
(0.545,-1.027) --
(0.537,-1.027) --
(0.529,-1.026) --
(0.522,-1.026) --
(0.514,-1.026) --
(0.507,-1.025) --
(0.5,-1.025) --
(0.493,-1.025) --
(0.486,-1.024) --
(0.48,-1.024) --
(0.474,-1.024) --
(0.468,-1.023) --
(0.462,-1.023) --
(0.456,-1.023) --
(0.45,-1.023) --
(0.444,-1.022) --
(0.439,-1.022) --
(0.434,-1.022) --
(0.429,-1.021) --
(0.424,-1.021) --
(0.419,-1.021) --
(0.414,-1.021) --
(0.409,-1.02) --
(0.404,-1.02) --
(0.4,-1.02) --
(0.396,-1.02) --
(0.391,-1.02) --
(0.387,-1.019) --
(0.383,-1.019) --
(0.379,-1.019) --
(0.375,-1.019) --
(0.371,-1.019) --
(0.367,-1.018) --
(0.364,-1.018) --
(0.36,-1.018) --
(0.327,-1.016) --
(0.3,-1.015) --
(0.277,-1.014) --
(0.257,-1.013) --
(0.24,-1.012) --
(0.225,-1.011) --
(0.212,-1.011) --
(0.2,-1.01) --
(0.189,-1.009) --
(0.18,-1.009) --
(0.171,-1.009) --
(0.164,-1.008) --
(0.157,-1.008) --
(0.15,-1.008)--
(0,-1)--
(0,0)
-- cycle;

\draw [color = black] plot
(8,0) circle (.1)--
(7,1)circle (.1)--
(6,1.483)circle (.1)--
(5.167,1.736)circle (.1)--
(4.5,1.874)circle (.1)--
(3.969,1.953)circle (.1)--
(3.542,1.997)circle (.1)--
(3.194,2.022)circle (.1)--
(2.906,2.037)circle (.1)--
(2.665,2.044)circle (.1)--
(2.461,2.047)circle (.1)--
(2.285,2.048)circle (.1)--
(2.133,2.048)circle (.1)--
(2,2.047)circle (.1)--
(1.882,2.046)circle (.1)--
(1.778,2.044)circle (.1)--
(1.684,2.042)circle (.1)--
(1.6,2.04)circle (.1)--
(1.524,2.039)circle (.1)--
(1.455,2.037)circle (.1)--
(1.391,2.036)circle (.1)--
(1.333,2.034)circle (.1)--
(1.28,2.033)circle (.1)--
(1.231,2.032)circle (.1)--
(1.185,2.031)circle (.1)--
(1.143,2.03)circle (.1)--
(1.103,2.029)circle (.1)--
(1.067,2.028)circle (.1)--
(1.032,2.027)circle (.1)--
(1,2.026)circle (.1)--
(0.97,2.025)circle (.1)--
(0.941,2.024)circle (.1)--
(0.914,2.024)circle (.1)--
(0.889,2.023)circle (.1)--
(0.865,2.022)circle (.1)--
(0.842,2.022)circle (.1)--
(0.821,2.021)circle (.1)--
(0.8,2.021)circle (.1)--
(0.78,2.02)circle (.1)--
(0.762,2.02)circle (.1)--
(0.744,2.019)circle (.1)--
(0.727,2.019)circle (.1)--
(0.711,2.018)circle (.1)--
(0.696,2.018)circle (.1)--
(0.681,2.018)circle (.1)--
(0.667,2.017)circle (.1)--
(0.653,2.017)circle (.1)--
(0.64,2.017)circle (.1)--
(0.627,2.016)circle (.1)--
(0.615,2.016)circle (.1)--
(0.604,2.016)circle (.1)--
(0.593,2.015)circle (.1)--
(0.582,2.015)circle (.1)--
(0.571,2.015)circle (.1)--
(0.561,2.015)circle (.1)--
(0.552,2.014)circle (.1)--
(0.542,2.014)circle (.1)--
(0.533,2.014)circle (.1)--
(0.525,2.014)circle (.1)--
(0.516,2.013)circle (.1)--
(0.508,2.013)circle (.1)--
(0.5,2.013)circle (.1)--
(0.492,2.013)circle (.1)--
(0.485,2.013)circle (.1)--
(0.478,2.012)circle (.1)--
(0.471,2.012)circle (.1)--
(0.464,2.012)circle (.1)--
(0.457,2.012)circle (.1)--
(0.451,2.012)circle (.1)--
(0.444,2.012)circle (.1)--
(0.438,2.011)circle (.1)--
(0.432,2.011)circle (.1)--
(0.427,2.011)circle (.1)--
(0.421,2.011)circle (.1)--
(0.416,2.011)circle (.1)--
(0.41,2.011)circle (.1)--
(0.405,2.01)circle (.1)--
(0.4,2.01)circle (.1)--
(0.395,2.01)circle (.1)--
(0.39,2.01)circle (.1)--
(0.386,2.01)circle (.1)--
(0.381,2.01)circle (.1)--
(0.376,2.01)circle (.1)--
(0.372,2.01)circle (.1)--
(0.368,2.01)circle (.1)--
(0.364,2.009)circle (.1)--
(0.36,2.009)circle (.1)--
(0.356,2.009)circle (.1)--
(0.352,2.009)circle (.1)--
(0.348,2.009)circle (.1)--
(0.344,2.009)circle (.1)--
(0.34,2.009)circle (.1)--
(0.337,2.009)circle (.1)--
(0.333,2.009)circle (.1)--
(0.33,2.009)circle (.1)--
(0.327,2.008)circle (.1)--
(0.323,2.008)circle (.1)--
(0.32,2.008)circle (.1)--
(0.291,2.008)circle (.1)--
(0.267,2.007)circle (.1)--
(0.246,2.006)circle (.1)--
(0.229,2.006)circle (.1)--
(0.213,2.006)circle (.1)--
(0.2,2.005)circle (.1)--
(0.188,2.005)circle (.1)--
(0.178,2.005)circle (.1)--
(0.168,2.004)circle (.1)--
(0.16,2.004)circle (.1)--
(0.152,2.004)circle (.1)--
(0.145,2.004)circle (.1)--
(0.139,2.004)circle (.1)--
(0.133,2.003)circle (.1)

(8,0)circle (.1)--
(7.5,-0.55)circle (.1)--
(6.6,-0.94)circle (.1)--
(5.75,-1.113)circle (.1)--
(5.036,-1.174)circle (.1)--
(4.453,-1.188)circle (.1)--
(3.979,-1.183)circle (.1)--
(3.591,-1.173)circle (.1)--
(3.268,-1.16)circle (.1)--
(2.998,-1.148)circle (.1)--
(2.768,-1.138)circle (.1)--
(2.571,-1.128)circle (.1)--
(2.4,-1.12)circle (.1)--
(2.25,-1.112)circle (.1)--
(2.118,-1.106)circle (.1)--
(2,-1.1)circle (.1)--
(1.895,-1.095)circle (.1)--
(1.8,-1.09)circle (.1)--
(1.714,-1.086)circle (.1)--
(1.636,-1.082)circle (.1)--
(1.565,-1.078)circle (.1)--
(1.5,-1.075)circle (.1)--
(1.44,-1.072)circle (.1)--
(1.385,-1.069)circle (.1)--
(1.333,-1.067)circle (.1)--
(1.286,-1.064)circle (.1)--
(1.241,-1.062)circle (.1)--
(1.2,-1.06)circle (.1)--
(1.161,-1.058)circle (.1)--
(1.125,-1.056)circle (.1)--
(1.091,-1.055)circle (.1)--
(1.059,-1.053)circle (.1)--
(1.029,-1.051)circle (.1)--
(1,-1.05)circle (.1)--
(0.973,-1.049)circle (.1)--
(0.947,-1.047)circle (.1)--
(0.923,-1.046)circle (.1)--
(0.9,-1.045)circle (.1)--
(0.878,-1.044)circle (.1)--
(0.857,-1.043)circle (.1)--
(0.837,-1.042)circle (.1)--
(0.818,-1.041)circle (.1)--
(0.8,-1.04)circle (.1)--
(0.783,-1.039)circle (.1)--
(0.766,-1.038)circle (.1)--
(0.75,-1.037)circle (.1)--
(0.735,-1.037)circle (.1)--
(0.72,-1.036)circle (.1)--
(0.706,-1.035)circle (.1)--
(0.692,-1.035)circle (.1)--
(0.679,-1.034)circle (.1)--
(0.667,-1.033)circle (.1)--
(0.655,-1.033)circle (.1)--
(0.643,-1.032)circle (.1)--
(0.632,-1.032)circle (.1)--
(0.621,-1.031)circle (.1)--
(0.61,-1.031)circle (.1)--
(0.6,-1.03)circle (.1)--
(0.59,-1.03)circle (.1)--
(0.581,-1.029)circle (.1)--
(0.571,-1.029)circle (.1)--
(0.563,-1.028)circle (.1)--
(0.554,-1.028)circle (.1)--
(0.545,-1.027)circle (.1)--
(0.537,-1.027)circle (.1)--
(0.529,-1.026)circle (.1)--
(0.522,-1.026)circle (.1)--
(0.514,-1.026)circle (.1)--
(0.507,-1.025)circle (.1)--
(0.5,-1.025)circle (.1)--
(0.493,-1.025)circle (.1)--
(0.486,-1.024)circle (.1)--
(0.48,-1.024)circle (.1)--
(0.474,-1.024)circle (.1)--
(0.468,-1.023)circle (.1)--
(0.462,-1.023)circle (.1)--
(0.456,-1.023)circle (.1)--
(0.45,-1.023)circle (.1)--
(0.444,-1.022)circle (.1)--
(0.439,-1.022)circle (.1)--
(0.434,-1.022)circle (.1)--
(0.429,-1.021)circle (.1)--
(0.424,-1.021)circle (.1)--
(0.419,-1.021)circle (.1)--
(0.414,-1.021)circle (.1)--
(0.409,-1.02)circle (.1)--
(0.404,-1.02)circle (.1)--
(0.4,-1.02)circle (.1)--
(0.396,-1.02)circle (.1)--
(0.391,-1.02)circle (.1)--
(0.387,-1.019)circle (.1)--
(0.383,-1.019)circle (.1)--
(0.379,-1.019)circle (.1)--
(0.375,-1.019)circle (.1)--
(0.371,-1.019)circle (.1)--
(0.367,-1.018)circle (.1)--
(0.364,-1.018)circle (.1)--
(0.36,-1.018)circle (.1)--
(0.327,-1.016)circle (.1)--
(0.3,-1.015)circle (.1)--
(0.277,-1.014)circle (.1)--
(0.257,-1.013)circle (.1)--
(0.24,-1.012)circle (.1)--
(0.225,-1.011)circle (.1)--
(0.212,-1.011)circle (.1)--
(0.2,-1.01)circle (.1)--
(0.189,-1.009)circle (.1)--
(0.18,-1.009)circle (.1)--
(0.171,-1.009)circle (.1)--
(0.164,-1.008)circle (.1)--
(0.157,-1.008)circle (.1)--
(0.15,-1.008)circle (.1);

\draw [color = black,->](0,0) --(10,0)node[right] {\small $x$};
\draw [color = black,->](0,-4) --(0,5)node[right] {\small $y$};
\draw [thick, domain = 2:4.85, samples=400] plot ( {(\x-2)^2}, {\x});
\draw [thick, domain = 0:8, samples=400] plot ( {\x}, {0.1*\x*(\x-8)-1});
{\draw [color = black,fill = black](0,2)  circle (.1) node[left] {\small $w_1(\infty)$};}
{\draw [color = black,fill = black](0,-1)  circle (.1) node[left] {\small $w_2(\infty)$};}
\draw [color = black,fill = black](8,0)  circle (.1) node[above right] {\small $w(0)$};
{\foreach \n in {1,...,3} \draw [color = black,fill = black]({8/(2^\n)},{2.828/(1.414)^\n+2})circle(.1) node[above] {\small $v_1(\n)$};
\foreach \n in {4,...,9} \draw [color = black,fill = black]({8/(2^\n)},{2.828/(1.414)^\n+2})  circle (.1);}
{\foreach \n in {1,...,3} \draw [color = black,fill = black]({12/(2^\n)},{0.1*12/(2^\n)*(12/(2^\n)-8)-1})circle(.1) node[below] {\small $v_2(\n)$};
\foreach \n in {4,...,9} \draw [color = black,fill = black]({12/(2^\n)},{0.1*12/(2^\n)*(12/(2^\n)-8)-1})circle(.1);}
\draw [color = black] (8.4,4.5)node{\small $\ell_1$};
\draw [color = black] (8.4,-1)node{\small $\ell_2$};
\draw [color = black] (4,0.7)node{\small $\R(\Phi)$};
\end{tikzpicture}
\caption{ $\R(\Phi)$ is an infinite polygon.}
\label{RotPhi}
\end{center}
\end{figure}

\begin{proof}
Note that Lemma \ref{wj monotonic} and the fact that $(-1)^i\ell_i(x_i(1))<(-1)^i(\lambda+1)\ell_i(x_i(2))$ assure that for each $i=1,2$ the sequence of points $\{w_i(j)\}_{j>\lambda}$ monotonically converges to $w_i(\infty)$.

The result of Sigmund that the periodic point measures are dense in $\cM$ reduces our considerations to rotation vectors of measures supported on periodic orbits.

Suppose $\xi\in X$ is a periodic point of period $n$. We may assume that $\xi=(\xi_1,...,\xi_n,...)$ and $(\xi_1,...,\xi_n)$ is maximally partitioned into $k$ blocks of sizes $n_1,...,n_k$ such that $n_1+...+n_k=n$, and each block exclusively contains elements of either $S_1$ or $S_2$. It follows from the construction of $\Phi$ that $n\cdot\rv(\xi)$ is the sum of blocks of vectors of the form
\begin{equation}\label{decomposition}
(\lambda-1)w(0)+\sum_{i=1}^{n_j-(\lambda-1)}v_s(i),
\end{equation}
for $n_j\ge\lambda$. Here  $s=1$ if the elements of $j^\text{th}$ block are from $S_1$ and $s=2$ if the elements of $j^\text{th}$ block are from $S_2$.
In case $n_j\le \lambda-1$ the block's contribution is $n_jw(0)$.

First we show that $\rv(\xi)\in \overline{\rm Conv}\{w(0),w_s(j): j\ge\lambda,\,s=1,2\}$ for $k=2$ and $n_1,n_2\ge\lambda$. In this case we have
$$\rv(\xi)=\frac{1}{n}\left(
  \begin{array}{c}
    2(\lambda-1)a+\sum\limits_{i=1}^{n_1-\lambda+1}x_1(i)+\sum\limits_{i=1}^{n_2-\lambda+1}x_2(i) \\
    \sum\limits_{i=1}^{n_1-\lambda+1}\ell_1(x_1(i))+\sum\limits_{i=1}^{n_2-\lambda+1}\ell_2(x_2(i)) \\
  \end{array}
\right)
$$
The second coordinate of the expression above is zero if and only if $n_1=n_2=\lambda$ and in this case the first coordinate is less than $a$. Hence, By symmetry we restrict ourselves to the case when the second coordinate of $\rv(\xi)$ is positive, that is $n_1>n_2$. We compare $\rv(\xi)$ with points $w_1(n)$ and $\frac{n_1}{n}w_1(n_1)+\frac{n_2}{n}w_1(n_2)$. Note that $$w_1(n)=\frac{1}{n}\left(
  \begin{array}{c}
    \lambda a+\sum\limits_{i=1}^{n-\lambda}x_1(i) \\
    \sum\limits_{i=1}^{n-\lambda}\ell_1(x_1(i)) \\
  \end{array}
\right),$$
$$ \frac{n_1}{n}w_1(n_1)+\frac{n_2}{n}w_1(n_2)=\frac{1}{n}\left(
  \begin{array}{c}
    2\lambda a+\sum\limits_{i=1}^{n_1-\lambda}x_1(i)+\sum\limits_{i=1}^{n_2-\lambda}x_1(i) \\
    \sum\limits_{i=1}^{n_1-\lambda}\ell_1(x_1(i))+\sum\limits_{i=1}^{n_2-\lambda}\ell_1(x_1(i)) \\
  \end{array}
\right)
$$
as long as $n_2>\lambda$. When $n_2=\lambda$ we have
$$ \frac{n_1}{n}w_1(n_1)+\frac{n_2}{n}w_1(n_2)=\frac{1}{n}\left(
  \begin{array}{c}
    2\lambda a-a+\sum\limits_{i=1}^{n_1-\lambda}x_1(i)+2x_1(1)+x_2(1)\\
    \sum\limits_{i=1}^{n_1-\lambda}\ell_1(x_1(i))+\ell_2(x_2(1)) \\
  \end{array}
\right)
.$$ Using the facts that $a>\sum_{i=1}^\infty x_s(i)$ for $s=1,2$ and $\lambda\ge 3$, for the first coordinates we obtain
\begin{equation}{\rm pr}_1(w_1(n))\le{\rm pr}_1\left(\rv(\xi)\right)\le{\rm pr}_1\left(\frac{n_1}{n}w_1(n_1)+\frac{n_2}{n}w_1(n_2)\right)\end{equation}
Since $\ell_2$ has negative values and $\ell_1(x_1(n_1-\lambda+1))\le \ell_1(x_1(1))$, for the second coordinates we obtain
\begin{equation}{\rm pr}_2\left(\frac{n_1}{n}w_1(n_1)+\frac{n_2}{n}w_1(n_2)\right)\ge{\rm pr}_2(w_1(n))>{\rm pr}_2(\rv(\xi)),\end{equation} and thus $\rv(\xi)\in \overline{\rm Conv}\{w(0),w_i(j): j\ge\lambda,\,i=1,2\}$.

The case $k=3$ is similar. We have $n=n_1+n_2+n_3$ with $n_1,n_2,n_3\ge\lambda$. By symmetry, we may assume that $\rv(\xi)$ has a nonnegative second coordinate and that we can write
\begin{equation}\rv(\xi)=\frac1n \left[3(\lambda-1)w_0+\sum\limits_{i=1}^{n_1-\lambda+1}v_1(i)+\sum\limits_{i=1}^{n_2-\lambda+1}v_2(i)+\sum\limits_{i=1}^{n_3-\lambda+1}v_1(i)\right],
\end{equation}
where $n_1\ge n_3$. We compare $\rv(\xi)$ with points $\frac{n_1+n_2-1}{n}w_1(n_1+n_2-1)+\frac{n_3+1}{n}w_1(n_3+1)$ and $\frac{n_1}{n}w_1(n_1)+\frac{n_2}{n}w_1(n_2)+\frac{n_3}{n}w_1(n_3)$. We have
\begin{multline}\rv(\xi)-\left[\frac{n_1+n_2-1}{n}w_1(n_1+n_2-1)+\frac{n_3+1}{n}w_1(n_3+1)\right]\\
=\frac{1}{n}\left(
  \begin{array}{c}
    (\lambda-3)a+\sum\limits_{i=n_1+n_2-\lambda}^{n_1-\lambda+1}x_1(i)+\sum\limits_{i=1}^{n_2-\lambda+1}x_2(i)\\
    \sum\limits_{i=1}^{n_2-\lambda+1}\ell_2(x_2(i))-\sum\limits_{i=n_1-\lambda+2}^{n_1-\lambda+n_2-1}\ell_1(x_1(i))
  \end{array}
  \right).
\end{multline}
Since the first coordinate of the difference is positive and the second is negative, the point $\frac{n_1+n_2-1}{n}w_1(n_1+n_2-1)+\frac{n_3+1}{n}w_1(n_3+1)$ is to the left and above of $\rv(\xi)$. To compare $\rv(\xi)$ with the other point we first consider the case when all $n_j$ are strictly greater than $\lambda$. Clearly,
\begin{multline}
{\rm pr}_1(\rv(\xi))-{\rm pr}_1\left(\frac{n_1}{n}w_1(n_1)+\frac{n_2}{n}w_1(n_2)+\frac{n_3}{n}w_1(n_3)\right)\\
=\frac1n[x_1(n_1-\lambda+1)+x_1(n_2-\lambda+1)+x_3(n_1-\lambda+1)-3a]
\end{multline}
is negative and the point $\frac{n_1}{n}w_1(n_1)+\frac{n_2}{n}w_1(n_2)+\frac{n_3}{n}w_1(n_3)$ is to the right of $\rv(\xi)$. On the other hand, using the facts that the function $\ell_2$ has negative values and $n_j>\lambda$ we obtain
\begin{multline}n\cdot{\rm pr}_2(\rv(\xi))-n\cdot{\rm pr}_2\left(\frac{n_1}{n}w_1(n_1)+\frac{n_2}{n}w_1(n_2)+\frac{n_3}{n}w_1(n_3)\right)\\
=\ell_1(x_1(n_1-\lambda+1))+\ell_1(x_1(n_3-\lambda+1))
+\sum\limits_{i=1}^{n_2-\lambda+1}\ell_2(x_2(i))-\sum\limits_{i=1}^{n_2-\lambda}\ell_1(x_1(i))\\
<2\ell_1(x_1(2))-\ell_1(x_1(1))<0.
\end{multline}
The last expression is negative since $\ell_1(x_1(1))>(\lambda+1)\ell_1(x_1(2))$ by the assumption on the function $\ell_1$. Hence, the point $\frac{n_1}{n}w_1(n_1)+\frac{n_2}{n}w_1(n_2)+\frac{n_3}{n}w_1(n_3)$ is above $\rv(\xi)$.  It follows that $\rv(\xi)\in \overline{\rm Conv}\{w(0),w_i(j): j\ge\lambda,\,i=1,2\}$ as long as $n_j>\lambda$ for all $j$. The case when some of $n_j$ are equal to $\lambda$ requires separate consideration since the formula for $w_1(\lambda)$ is different. However, the estimates could be done in a similar way and we omit them here.
We point out that in the case $n_1=n_2=n_3=\lambda$ we have $\rv(\xi)=w_1(\lambda)$.

To conclude the proof we notice that the rotation vector of any periodic orbit can be written as a convex combination of vectors described in the previous two cases and $w_0$.
\end{proof}

We now show that in the symmetric case all ground states in the direction of $(-1,0)$ belong to the rotation class of the mid point between $w_1(\infty)$ and $w_2(\infty)$. Theorem \ref{thm1} guaranties that all other points in the interior of the face $[w_1(\infty),w_2(\infty)]$ do not correspond to ground states.

\begin{proposition}\label{symmetric} Let $\Phi=(\phi_1,\phi_2)$ be the potential defined in Example \ref{ex1} where, in addition, we have $x_1(k)=x_2(k)$ for all $k\in\bN$ and $\ell_2(x)=-\ell_1(x)$ for $x\in[0,a]$ with $\ell_1(x_1(1))>(\lambda+1)\ell_1(x_1(2))$. Suppose $\alpha=(-1,0)$ and $\mu_\alpha\in GS(\alpha)$. Then $\rv(\mu_\alpha)=(0,0)$.
\end{proposition}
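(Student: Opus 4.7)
The plan is to exploit the $\bZ/2$-symmetry of the symmetric potential together with uniqueness of equilibrium states for H\"older potentials on a subshift of finite type. I would introduce the involution $\sigma\colon X\to X$ defined by $\sigma(\xi)_n = \xi_n+2\bmod 4$, which swaps the two subalphabets $S_1=\{0,1\}$ and $S_2=\{2,3\}$, is a homeomorphism commuting with the shift $f$, and preserves $Y_0(\lambda)$ while exchanging $Y_i(k-1)\setminus Y_i(k)$ with $Y_{3-i}(k-1)\setminus Y_{3-i}(k)$ and $Y_i(\infty)$ with $Y_{3-i}(\infty)$. Using $x_1=x_2$ and $\ell_2=-\ell_1$, a case-by-case check on these pieces gives $\phi_1\circ\sigma=\phi_1$ and $\phi_2\circ\sigma=-\phi_2$ on all of $X$.

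Next I would apply Theorem \ref{thm1}(a) with $\alpha=(-1,0)$. Since $\phi_1\ge 0$ with equality exactly on $Y_1(\infty)\cup Y_2(\infty)$, the face $F_\alpha(\Phi)$ is the vertical segment $[w_2(\infty),w_1(\infty)]$, so $\rv(\mu_\alpha)=(0,y_\alpha)$ for some $y_\alpha\in[-\ell_1(0),\ell_1(0)]$. It remains to prove $y_\alpha=0$.

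The crucial step is to show that $-t\phi_1$ admits a unique equilibrium state $\mu_t$ for every $t\ge 0$. Because $(x_1(k))$ decays exponentially, say $x_1(k)\le C\rho^k$ with $\rho\in(0,1)$, the value $\phi_1(\xi)$ is essentially controlled by the length of the maximal initial same-side run of $\xi$: two points agreeing on the first $n\ge\lambda$ coordinates either produce the same value of $\phi_1$, or both lie in $Y_i(n)$ for the same $i$, in which case their values differ by at most $2x_1(n-\lambda+1)\le C'\rho^n$. Combined with $d(\xi,\xi')\le 2^{-n}$ this yields H\"older continuity of $\phi_1$. Since $f$ is the one-sided full $4$-shift (a subshift of finite type), the classical Bowen--Ruelle--Sinai theory cited in Section 2.2 then provides the desired uniqueness of $\mu_t$.

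With uniqueness in hand the proof closes quickly. The measure $\sigma_\ast \mu_t$ is $f$-invariant with the same entropy as $\mu_t$, and $\int\phi_1\,d\sigma_\ast\mu_t=\int\phi_1\circ\sigma\,d\mu_t=\int\phi_1\,d\mu_t$, so $\sigma_\ast\mu_t\in ES(-t\phi_1)$; hence $\sigma_\ast\mu_t=\mu_t$. Using $\phi_2\circ\sigma=-\phi_2$ we obtain
\[
\int\phi_2\,d\mu_t \;=\; \int\phi_2\circ\sigma\,d\mu_t \;=\; -\int\phi_2\,d\mu_t,
\]
so $\int\phi_2\,d\mu_t=0$ for every $t$. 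Since any $\mu_\alpha\in GS(\alpha)$ is a weak-$*$ limit of some $\mu_{t_n}$ with $t_n\to\infty$, this forces $\int\phi_2\,d\mu_\alpha=0$, and together with the second paragraph yields $\rv(\mu_\alpha)=(0,0)$. The only delicate point is the H\"older estimate needed to invoke uniqueness; once that is in place the symmetry argument is essentially automatic.
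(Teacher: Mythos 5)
Your proof is correct and follows essentially the same route as the paper: the same involution swapping $S_1$ and $S_2$ (called $T$ there), the same use of Theorem \ref{thm1} to place $\rv(\mu_\alpha)$ on the vertical face, and the same uniqueness-of-equilibrium-states argument forcing $\int\phi_2\,d\mu_t=0$ for all $t$. The only difference is that you explicitly verify the H\"older continuity of $\phi_1$ to justify uniqueness, a point the paper's proof asserts without detail.
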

\begin{proof}
Since the segment connecting $w_1(\infty)$ and $w_2(\infty)$ is the face corresponding to $H_\alpha(\Phi)$, Theorem \ref{thm1} implies that the first coordinate of $\rv(\mu_\alpha)$ is zero. To show that $\rv(\mu_\alpha)=(0,0)$ we prove that $\rv(\mu_{t\alpha\cdot\Phi})$ have zero second coordinate for all $t>0$, i.e. $\int\phi_2\,d\mu_{t\phi_2}=0$.

We define a map $T:X\to X$ by $T(\xi)=\bar{\xi}$ where
\begin{equation}
\bar{\xi}_j=\left\{
  \begin{array}{ll}
    \xi_j+2, & \hbox{if $\xi_j\in S_1$;}  \\
    \xi_j-2, & \hbox{if $\xi_j\in S_2$.}
  \end{array}
\right.
\end{equation}
Note that if $\xi_j$ is in $S_i$ then $\bar{\xi}_j$ is in the complementary alphabet $S_{3-i}$ for $i\in\{1,2\}$. It follows that
\begin{itemize}
  \item $\xi\in X_0\,\qquad \Longleftrightarrow \quad \bar{\xi}\in X_0$;
  \item $\xi\in X_i(k)\quad \Longleftrightarrow \quad \bar{\xi}\in X_{3-i}(k)\quad \text{for}\, i\in\{1,2\}$.
\end{itemize}
The symmetry in the definition of $\Phi$ implies that $\Phi(\bar{\xi})=(\phi_1(\xi),-\phi_2(\xi))$.
Clearly, $T$ is a homeomorphism ($T=T^{-1}$) and $T\circ f=f\circ T$. For any invariant measure $\mu$ the dynamical systems $f:(X,\mu)\to(X,\mu)$ and $f:(X,\mu\circ T)\to(X,\mu\circ T)$ are metrically isomorphic. Therefore, $h_\mu(f)=h_{\mu\circ T}(f)$.

For $\alpha=(-1,0)$ measure $\mu_{t\alpha\cdot\Phi}$ is the equilibrium state for the potential $t\alpha\cdot\Phi=-t\phi_1$. Since $h_{\mu_{t\alpha\cdot\Phi}}(f)=h_{\mu_{t\alpha\cdot\Phi}\circ T}(f)$ and $\int\phi_1(\xi)\,d\mu_{t\alpha\cdot\Phi}\circ T(\xi)=\int\phi_1(\bar{\xi})\,d\mu_{t\alpha\cdot\Phi}(\xi)=\int\phi_1(\xi)\,d\mu_{t\alpha\cdot\Phi}(\xi)$, the measure $\mu_{t\alpha\cdot\Phi}\circ T$ is also an equilibrium state for the potential $-t\phi_1$. The uniqueness of equilibrium states implies  $\mu_{t\alpha\cdot\Phi}\circ T=\mu_{t\alpha\cdot\Phi}$. However, $$\int\phi_2(\xi)\,d\mu_{t\alpha\cdot\Phi}\circ T(\xi)=\int\phi_2(\bar{\xi})\,d\mu_{t\alpha\cdot\Phi}(\xi)=-\int\phi_2(\xi)\,d\mu_{t\alpha\cdot\Phi}(\xi)$$ and thus we must have $\int\phi_2\,d\mu_{t\alpha\cdot\Phi}=0$.

It follows that $\rv(\mu_{t\alpha\cdot\Phi}),$ for $t>0$ are on the $x$-axis. The rotation vector of their accumulation point is the intersection of the $x$-axis and the boundary of the rotation set of $\Phi$, which is $(0,0)$. All other points on the boundary strictly between $w_2(\infty)$ and $w_1(\infty)$ are not rotation vectors of the ground states of $\Phi$.


\end{proof}

Now we specify the functions $\ell_1$ and $\ell_2$ so that $w_1(\infty)$ and $w_2(\infty)$ are extreme non-exposed points of the rotation set of $\Phi$ (see Figure \ref{RotPhi2}). Then we apply Proposition \ref{symmetric} and conclude that even if the point on the boundary is extreme (but non-exposed) it might not correspond to a rotation vector of any ground state of $\Phi$.

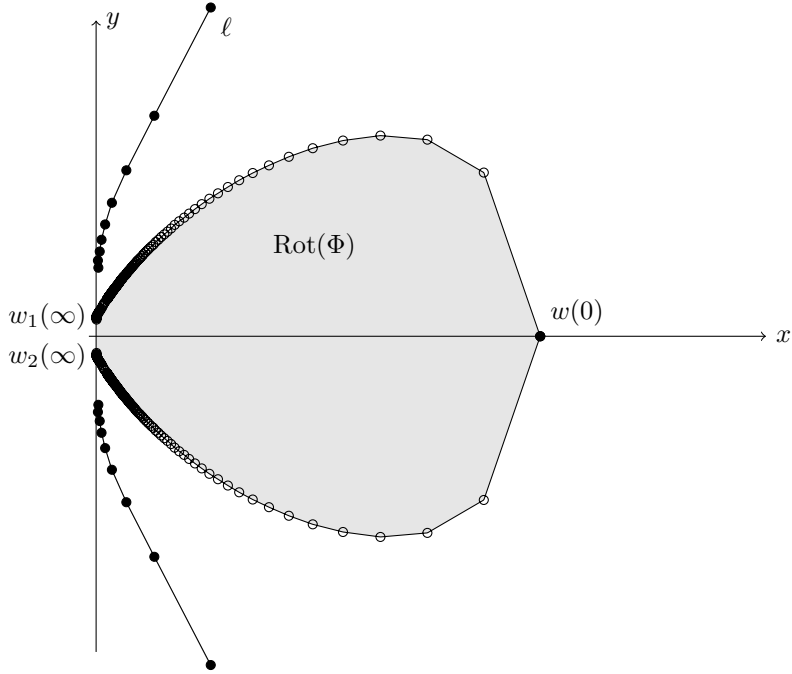
\begin{figure}[h]
\begin{center}
\begin{tikzpicture} [scale = 6]
\fill[fill=black!10] plot  (1,0)--
(0.875,-0.363)--
(0.75,-0.436)--
(0.646,-0.445)--
(0.563,-0.434)--
(0.496,-0.417)--
(0.443,-0.398)--
(0.399,-0.379)--
(0.363,-0.362)--
(0.333,-0.346)--
(0.308,-0.331)--
(0.286,-0.317)--
(0.267,-0.305)--
(0.25,-0.293)--
(0.235,-0.282)--
(0.222,-0.272)--
(0.211,-0.263)--
(0.2,-0.255)--
(0.19,-0.247)--
(0.182,-0.239)--
(0.174,-0.232)--
(0.167,-0.226)--
(0.16,-0.22)--
(0.154,-0.214)--
(0.148,-0.209)--
(0.143,-0.203)--
(0.138,-0.199)--
(0.133,-0.194)--
(0.129,-0.19)--
(0.125,-0.186)--
(0.121,-0.182)--
(0.118,-0.178)--
(0.114,-0.174)--
(0.111,-0.171)--
(0.108,-0.168)--
(0.105,-0.165)--
(0.103,-0.162)--
(0.1,-0.159)--
(0.098,-0.156)--
(0.095,-0.153)--
(0.093,-0.151)--
(0.091,-0.148)--
(0.089,-0.146)--
(0.087,-0.144)--
(0.085,-0.141)--
(0.083,-0.139)--
(0.082,-0.137)--
(0.08,-0.135)--
(0.078,-0.133)--
(0.077,-0.132)--
(0.075,-0.13)--
(0.074,-0.128)--
(0.073,-0.126)--
(0.071,-0.125)--
(0.07,-0.123)--
(0.069,-0.122)--
(0.068,-0.12)--
(0.067,-0.119)--
(0.066,-0.117)--
(0.065,-0.116)--
(0.063,-0.114)--
(0.063,-0.113)--
(0.062,-0.112)--
(0.061,-0.111)--
(0.06,-0.109)--
(0.059,-0.108)--
(0.058,-0.107)--
(0.057,-0.106)--
(0.056,-0.105)--
(0.056,-0.104)--
(0.055,-0.103)--
(0.054,-0.102)--
(0.053,-0.101)--
(0.053,-0.1)--
(0.052,-0.099)--
(0.051,-0.098)--
(0.051,-0.097)--
(0.05,-0.096)--
(0.049,-0.095)--
(0.049,-0.095)--
(0.048,-0.094)--
(0.048,-0.093)--
(0.047,-0.092)--
(0.047,-0.091)--
(0.046,-0.091)--
(0.045,-0.09)--
(0.045,-0.089)--
(0.044,-0.088)--
(0.044,-0.088)--
(0.043,-0.087)--
(0.043,-0.086)--
(0.043,-0.086)--
(0.042,-0.085)--
(0.042,-0.084)--
(0.041,-0.084)--
(0.041,-0.083)--
(0.04,-0.082)--
(0.04,-0.082)--
(0.036,-0.076)--
(0.033,-0.072)--
(0.031,-0.068)--
(0.029,-0.064)--
(0.027,-0.061)--
(0.025,-0.058)--
(0.024,-0.056)--
(0.022,-0.054)--
(0.021,-0.052)--
(0.02,-0.05)--
(0.019,-0.048)--
(0.018,-0.047)--
(0.017,-0.045)--
(0.017,-0.044)--
(0.0169,-0.043)--
(0.0169,-0.042)--
(0.0169,-0.041)--
(0.0169,-0.04)--
(0.0169,-0.037)--
(0.0169,0)--
(1,0)--
(0.875,0.363)--
(0.75,0.436)--
(0.646,0.445)--
(0.563,0.434)--
(0.496,0.417)--
(0.443,0.398)--
(0.399,0.379)--
(0.363,0.362)--
(0.333,0.346)--
(0.308,0.331)--
(0.286,0.317)--
(0.267,0.305)--
(0.25,0.293)--
(0.235,0.282)--
(0.222,0.272)--
(0.211,0.263)--
(0.2,0.255)--
(0.19,0.247)--
(0.182,0.239)--
(0.174,0.232)--
(0.167,0.226)--
(0.16,0.22)--
(0.154,0.214)--
(0.148,0.209)--
(0.143,0.203)--
(0.138,0.199)--
(0.133,0.194)--
(0.129,0.19)--
(0.125,0.186)--
(0.121,0.182)--
(0.118,0.178)--
(0.114,0.174)--
(0.111,0.171)--
(0.108,0.168)--
(0.105,0.165)--
(0.103,0.162)--
(0.1,0.159)--
(0.098,0.156)--
(0.095,0.153)--
(0.093,0.151)--
(0.091,0.148)--
(0.089,0.146)--
(0.087,0.144)--
(0.085,0.141)--
(0.083,0.139)--
(0.082,0.137)--
(0.08,0.135)--
(0.078,0.133)--
(0.077,0.132)--
(0.075,0.13)--
(0.074,0.128)--
(0.073,0.126)--
(0.071,0.125)--
(0.07,0.123)--
(0.069,0.122)--
(0.068,0.12)--
(0.067,0.119)--
(0.066,0.117)--
(0.065,0.116)--
(0.063,0.114)--
(0.063,0.113)--
(0.062,0.112)--
(0.061,0.111)--
(0.06,0.109)--
(0.059,0.108)--
(0.058,0.107)--
(0.057,0.106)--
(0.056,0.105)--
(0.056,0.104)--
(0.055,0.103)--
(0.054,0.102)--
(0.053,0.101)--
(0.053,0.1)--
(0.052,0.099)--
(0.051,0.098)--
(0.051,0.097)--
(0.05,0.096)--
(0.049,0.095)--
(0.049,0.095)--
(0.048,0.094)--
(0.048,0.093)--
(0.047,0.092)--
(0.047,0.091)--
(0.046,0.091)--
(0.045,0.09)--
(0.045,0.089)--
(0.044,0.088)--
(0.044,0.088)--
(0.043,0.087)--
(0.043,0.086)--
(0.043,0.086)--
(0.042,0.085)--
(0.042,0.084)--
(0.041,0.084)--
(0.041,0.083)--
(0.04,0.082)--
(0.04,0.082)--
(0.036,0.076)--
(0.033,0.072)--
(0.031,0.068)--
(0.029,0.064)--
(0.027,0.061)--
(0.025,0.058)--
(0.024,0.056)--
(0.022,0.054)--
(0.021,0.052)--
(0.02,0.05)--
(0.019,0.048)--
(0.018,0.047)--
(0.017,0.045)--
(0.017,0.044)--
(0.0169,0.043)--
(0.0169,0.042)--
(0.0169,0.041)--
(0.0169,0.04)--
(0.0169,0.037)--
(0.0169,0)
-- cycle;

\draw [color = black,fill = black] plot
(0.27,0.729)circle (.01)--
(0.145,0.489)circle (.01)--
(0.083,0.368)circle (.01)--
(0.051,0.296)circle (.01)--
(0.036,0.248)circle (.01)--
(0.028,0.214)circle (.01)--
(0.024,0.188)circle (.01)--
(0.02,0.168)circle (.01)--
(0.021,0.152)circle (.01)
;

\draw [color = black] plot
(1,0)circle (.01)--
(0.875,0.363)circle (.01)--
(0.75,0.436)circle (.01)--
(0.646,0.445)circle (.01)--
(0.563,0.434)circle (.01)--
(0.496,0.417)circle (.01)--
(0.443,0.398)circle (.01)--
(0.399,0.379)circle (.01)--
(0.363,0.362)circle (.01)--
(0.333,0.346)circle (.01)--
(0.308,0.331)circle (.01)--
(0.286,0.317)circle (.01)--
(0.267,0.305)circle (.01)--
(0.25,0.293)circle (.01)--
(0.235,0.282)circle (.01)--
(0.222,0.272)circle (.01)--
(0.211,0.263)circle (.01)--
(0.2,0.255)circle (.01)--
(0.19,0.247)circle (.01)--
(0.182,0.239)circle (.01)--
(0.174,0.232)circle (.01)--
(0.167,0.226)circle (.01)--
(0.16,0.22)circle (.01)--
(0.154,0.214)circle (.01)--
(0.148,0.209)circle (.01)--
(0.143,0.203)circle (.01)--
(0.138,0.199)circle (.01)--
(0.133,0.194)circle (.01)--
(0.129,0.19)circle (.01)--
(0.125,0.186)circle (.01)--
(0.121,0.182)circle (.01)--
(0.118,0.178)circle (.01)--
(0.114,0.174)circle (.01)--
(0.111,0.171)circle (.01)--
(0.108,0.168)circle (.01)--
(0.105,0.165)circle (.01)--
(0.103,0.162)circle (.01)--
(0.1,0.159)circle (.01)--
(0.098,0.156)circle (.01)--
(0.095,0.153)circle (.01)--
(0.093,0.151)circle (.01)--
(0.091,0.148)circle (.01)--
(0.089,0.146)circle (.01)--
(0.087,0.144)circle (.01)--
(0.085,0.141)circle (.01)--
(0.083,0.139)circle (.01)--
(0.082,0.137)circle (.01)--
(0.08,0.135)circle (.01)--
(0.078,0.133)circle (.01)--
(0.077,0.132)circle (.01)--
(0.075,0.13)circle (.01)--
(0.074,0.128)circle (.01)--
(0.073,0.126)circle (.01)--
(0.071,0.125)circle (.01)--
(0.07,0.123)circle (.01)--
(0.069,0.122)circle (.01)--
(0.068,0.12)circle (.01)--
(0.067,0.119)circle (.01)--
(0.066,0.117)circle (.01)--
(0.065,0.116)circle (.01)--
(0.063,0.114)circle (.01)--
(0.063,0.113)circle (.01)--
(0.062,0.112)circle (.01)--
(0.061,0.111)circle (.01)--
(0.06,0.109)circle (.01)--
(0.059,0.108)circle (.01)--
(0.058,0.107)circle (.01)--
(0.057,0.106)circle (.01)--
(0.056,0.105)circle (.01)--
(0.056,0.104)circle (.01)--
(0.055,0.103)circle (.01)--
(0.054,0.102)circle (.01)--
(0.053,0.101)circle (.01)--
(0.053,0.1)circle (.01)--
(0.052,0.099)circle (.01)--
(0.051,0.098)circle (.01)--
(0.051,0.097)circle (.01)--
(0.05,0.096)circle (.01)--
(0.049,0.095)circle (.01)--
(0.049,0.095)circle (.01)--
(0.048,0.094)circle (.01)--
(0.048,0.093)circle (.01)--
(0.047,0.092)circle (.01)--
(0.047,0.091)circle (.01)--
(0.046,0.091)circle (.01)--
(0.045,0.09)circle (.01)--
(0.045,0.089)circle (.01)--
(0.044,0.088)circle (.01)--
(0.044,0.088)circle (.01)--
(0.043,0.087)circle (.01)--
(0.043,0.086)circle (.01)--
(0.043,0.086)circle (.01)--
(0.042,0.085)circle (.01)--
(0.042,0.084)circle (.01)--
(0.041,0.084)circle (.01)--
(0.041,0.083)circle (.01)--
(0.04,0.082)circle (.01)--
(0.04,0.082)circle (.01)--
(0.036,0.076)circle (.01)--
(0.033,0.072)circle (.01)--
(0.031,0.068)circle (.01)--
(0.029,0.064)circle (.01)--
(0.027,0.061)circle (.01)--
(0.025,0.058)circle (.01)--
(0.024,0.056)circle (.01)--
(0.022,0.054)circle (.01)--
(0.021,0.052)circle (.01)--
(0.02,0.05)circle (.01)--
(0.019,0.048)circle (.01)--
(0.018,0.047)circle (.01)--
(0.017,0.045)circle (.01)--
(0.017,0.044)circle (.01)--
(0.0169,0.043)circle (.01)--
(0.0169,0.042)circle (.01)--
(0.0169,0.041)circle (.01)--
(0.0169,0.04)circle (.01)--
(0.0169,0.037)circle (.01)
;

\draw [color = black,fill = black] plot
(0.27,-0.729)circle (.01)--
(0.145,-0.489)circle (.01)--
(0.083,-0.368)circle (.01)--
(0.051,-0.296)circle (.01)--
(0.036,-0.248)circle (.01)--
(0.028,-0.214)circle (.01)--
(0.024,-0.188)circle (.01)--
(0.02,-0.168)circle (.01)--
(0.021,-0.152)circle (.01)
;

\draw [color = black] plot
(1,0)circle (.01)--
(0.875,-0.363)circle (.01)--
(0.75,-0.436)circle (.01)--
(0.646,-0.445)circle (.01)--
(0.563,-0.434)circle (.01)--
(0.496,-0.417)circle (.01)--
(0.443,-0.398)circle (.01)--
(0.399,-0.379)circle (.01)--
(0.363,-0.362)circle (.01)--
(0.333,-0.346)circle (.01)--
(0.308,-0.331)circle (.01)--
(0.286,-0.317)circle (.01)--
(0.267,-0.305)circle (.01)--
(0.25,-0.293)circle (.01)--
(0.235,-0.282)circle (.01)--
(0.222,-0.272)circle (.01)--
(0.211,-0.263)circle (.01)--
(0.2,-0.255)circle (.01)--
(0.19,-0.247)circle (.01)--
(0.182,-0.239)circle (.01)--
(0.174,-0.232)circle (.01)--
(0.167,-0.226)circle (.01)--
(0.16,-0.22)circle (.01)--
(0.154,-0.214)circle (.01)--
(0.148,-0.209)circle (.01)--
(0.143,-0.203)circle (.01)--
(0.138,-0.199)circle (.01)--
(0.133,-0.194)circle (.01)--
(0.129,-0.19)circle (.01)--
(0.125,-0.186)circle (.01)--
(0.121,-0.182)circle (.01)--
(0.118,-0.178)circle (.01)--
(0.114,-0.174)circle (.01)--
(0.111,-0.171)circle (.01)--
(0.108,-0.168)circle (.01)--
(0.105,-0.165)circle (.01)--
(0.103,-0.162)circle (.01)--
(0.1,-0.159)circle (.01)--
(0.098,-0.156)circle (.01)--
(0.095,-0.153)circle (.01)--
(0.093,-0.151)circle (.01)--
(0.091,-0.148)circle (.01)--
(0.089,-0.146)circle (.01)--
(0.087,-0.144)circle (.01)--
(0.085,-0.141)circle (.01)--
(0.083,-0.139)circle (.01)--
(0.082,-0.137)circle (.01)--
(0.08,-0.135)circle (.01)--
(0.078,-0.133)circle (.01)--
(0.077,-0.132)circle (.01)--
(0.075,-0.13)circle (.01)--
(0.074,-0.128)circle (.01)--
(0.073,-0.126)circle (.01)--
(0.071,-0.125)circle (.01)--
(0.07,-0.123)circle (.01)--
(0.069,-0.122)circle (.01)--
(0.068,-0.12)circle (.01)--
(0.067,-0.119)circle (.01)--
(0.066,-0.117)circle (.01)--
(0.065,-0.116)circle (.01)--
(0.063,-0.114)circle (.01)--
(0.063,-0.113)circle (.01)--
(0.062,-0.112)circle (.01)--
(0.061,-0.111)circle (.01)--
(0.06,-0.109)circle (.01)--
(0.059,-0.108)circle (.01)--
(0.058,-0.107)circle (.01)--
(0.057,-0.106)circle (.01)--
(0.056,-0.105)circle (.01)--
(0.056,-0.104)circle (.01)--
(0.055,-0.103)circle (.01)--
(0.054,-0.102)circle (.01)--
(0.053,-0.101)circle (.01)--
(0.053,-0.1)circle (.01)--
(0.052,-0.099)circle (.01)--
(0.051,-0.098)circle (.01)--
(0.051,-0.097)circle (.01)--
(0.05,-0.096)circle (.01)--
(0.049,-0.095)circle (.01)--
(0.049,-0.095)circle (.01)--
(0.048,-0.094)circle (.01)--
(0.048,-0.093)circle (.01)--
(0.047,-0.092)circle (.01)--
(0.047,-0.091)circle (.01)--
(0.046,-0.091)circle (.01)--
(0.045,-0.09)circle (.01)--
(0.045,-0.089)circle (.01)--
(0.044,-0.088)circle (.01)--
(0.044,-0.088)circle (.01)--
(0.043,-0.087)circle (.01)--
(0.043,-0.086)circle (.01)--
(0.043,-0.086)circle (.01)--
(0.042,-0.085)circle (.01)--
(0.042,-0.084)circle (.01)--
(0.041,-0.084)circle (.01)--
(0.041,-0.083)circle (.01)--
(0.04,-0.082)circle (.01)--
(0.04,-0.082)circle (.01)--
(0.036,-0.076)circle (.01)--
(0.033,-0.072)circle (.01)--
(0.031,-0.068)circle (.01)--
(0.029,-0.064)circle (.01)--
(0.027,-0.061)circle (.01)--
(0.025,-0.058)circle (.01)--
(0.024,-0.056)circle (.01)--
(0.022,-0.054)circle (.01)--
(0.021,-0.052)circle (.01)--
(0.02,-0.05)circle (.01)--
(0.019,-0.048)circle (.01)--
(0.018,-0.047)circle (.01)--
(0.017,-0.045)circle (.01)--
(0.017,-0.044)circle (.01)--
(0.0169,-0.043)circle (.01)--
(0.0169,-0.042)circle (.01)--
(0.0169,-0.041)circle (.01)--
(0.0169,-0.04)circle (.01)--
(0.0169,-0.037)circle (.01)
;

\draw [color = black,->](0,0) --(1.5,0)node[right] {\small $x$};
\draw [color = black,->](0.016,-0.7) --(0.016,0.7)node[right] {\small $y$};
{\draw [color = black,fill = black](0.017,0.044)  circle (.01) node[left] {\small $w_1(\infty)$};}
{\draw [color = black,fill = black](0.017,-0.044)  circle (.01) node[left] {\small $w_2(\infty)$};}
\draw [color = black,fill = black](1,0)  circle (.01) node[above right] {\small $w(0)$};
\draw [color = black] (0.27,0.729)[below right]node{\small $\ell$};
\draw [color = black] (0.5,0.2)node{\small $\R(\Phi)$};
\end{tikzpicture}
\caption{ Points $w_1(\infty)$ and $w_2(\infty)$ are smooth exposed boundary points of $\R(\Phi)$.}
\label{RotPhi2}
\end{center}
\end{figure}

\begin{proposition}\label{extreme_pts}Let $X,f$ and $\Phi$ be as in Example \ref{ex1}. Consider $$\ell(x)=\frac{1}{130}-\frac{1}{\ln(x)},\,\text{for}\,\, x>0;\quad \ell(0)=\frac{1}{130}$$ Let $\ell_1(x)=\ell(x)$, $\ell_2(x)=-\ell(x)$, $x_1(k)=x_2(k)=e^{7-10k}$ for $k\in\bN$, $a=e^{-2}$ and $\lambda=3$. Then $w_1(\infty)=(0,\frac{1}{130})$ and $w_2(\infty)=(0,-\frac{1}{130})$ are extreme non-exposed points on $\partial\R(\Phi)$.  All points on the line segments $\left[w_2(\infty),0\right), \left(0,w_1(\infty)\right]\subset\partial\R(\Phi)$ are not rotation vectors of ground states of any direction $\alpha\in S^1$.
\end{proposition}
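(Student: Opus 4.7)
The plan is to combine Proposition \ref{prop1} and Proposition \ref{symmetric} with a sharp asymptotic analysis of the vertex sequence $w_i(j)\to w_i(\infty)$ in order to control the supporting hyperplanes at the endpoints of the edge $[w_2(\infty),w_1(\infty)]$, and then to invoke Theorem \ref{thm1} to rule out ground states whose rotation vectors lie on the rest of that edge.

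First I would verify that the chosen data satisfy the hypotheses of Example \ref{ex1}, Proposition \ref{prop1}, and Proposition \ref{symmetric}. A direct computation shows that $\ell(x)=\tfrac{1}{130}-\tfrac{1}{\ln x}$ is continuous on $[0,e^{-2}]$, non-negative, strictly increasing, and strictly concave, so $\ell_1=\ell$ and $\ell_2=-\ell$ qualify. The geometric sum gives $\sum_{k\ge 1}x_1(k)=e^{-3}/(1-e^{-10})<e^{-2}=a$, and with $\lambda=3$ the inequality $\ell_1(x_1(1))>(\lambda+1)\ell_1(x_1(2))$ reduces to $\tfrac{1}{130}+\tfrac{1}{3}>4\bigl(\tfrac{1}{130}+\tfrac{1}{13}\bigr)$, i.e.\ $\tfrac{1}{39}>\tfrac{3}{130}$, which holds. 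Thus Proposition \ref{prop1} yields $\R(\Phi)=\overline{\rm Conv}\{w(0),w_i(j):j\ge\lambda,\,i=1,2\}$ with $w_i(\infty)=(0,(-1)^{i+1}/130)$, and Proposition \ref{symmetric} guarantees that every $\mu_\alpha\in GS((-1,0))$ satisfies $\rv(\mu_\alpha)=(0,0)$.

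The central step is to identify the supporting structure at $w_1(\infty)$. From \eqref{wj} I write
\[
w_1(j)-w_1(\infty)=\frac{1}{j}\bigl(\lambda a+S_j,\;H_j-\tfrac{\lambda}{130}\bigr),
\]
where $S_j=\sum_{k=1}^{j-\lambda}x_1(k)$ and $H_j=\sum_{k=1}^{j-\lambda}\tfrac{1}{10k-7}$. Because $x_1(k)=e^{7-10k}$ decays geometrically, $S_j$ is bounded; on the other hand $H_j\sim\tfrac{\ln j}{10}\to\infty$. Consequently the chord slope $(H_j-\lambda/130)/(\lambda a+S_j)\to+\infty$. For any finite slope $s$ the line $y=\tfrac{1}{130}+sx$ has $w_2(\infty)=(0,-\tfrac{1}{130})$ strictly below it and, for all sufficiently large $j$, $w_1(j)$ strictly above it, so it cuts $\R(\Phi)$. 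The vertical line $\{x=0\}$ is a supporting hyperplane because ${\rm pr}_1\Phi\ge 0$ pointwise, and its intersection with $\R(\Phi)$ is the full edge $[w_2(\infty),w_1(\infty)]$. Hence $w_1(\infty)$ is a smooth boundary point with unique direction vector $(-1,0)$; being the top endpoint of that edge, it is extreme but not exposed. An identical argument, using the reflection $\phi_2\mapsto-\phi_2$, handles $w_2(\infty)$.

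For the second claim, assume for contradiction that some $p\in[w_2(\infty),0)\cup(0,w_1(\infty)]$ equals $\rv(\mu)$ for some $\mu\in GS(\alpha)$ with $\alpha\in S^1$. Theorem \ref{thm1}(a) gives $p\in F_\alpha(\Phi)$. If $p$ lies in the relative interior of the edge $[w_2(\infty),w_1(\infty)]$, the unique supporting hyperplane of $\R(\Phi)$ through $p$ must contain the whole edge, so it is $\{x=0\}$, forcing $\alpha=(-1,0)$; Proposition \ref{symmetric} then gives $p=(0,0)$, contradicting $p\neq(0,0)$. If $p\in\{w_1(\infty),w_2(\infty)\}$, the previous paragraph shows that the only direction vector associated with $p$ is $(-1,0)$, and Proposition \ref{symmetric} again forces $p=(0,0)$, the same contradiction. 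The main obstacle is the tangent asymptotic that produces the vertical supporting direction at $w_i(\infty)$: it requires the tailored balance between the geometric decay of $x_1(k)$ and the logarithmic divergence of $\sum 1/|\ln x_1(k)|$. Once this is in place, the remainder is a direct application of Theorem \ref{thm1}, Proposition \ref{prop1}, and Proposition \ref{symmetric}.
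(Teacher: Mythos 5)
Your proposal is correct and follows essentially the same route as the paper: verify the hypotheses of Proposition \ref{prop1}, show the chord slopes from $w_1(\infty)$ to $w_1(j)$ diverge (logarithmically divergent numerator series versus convergent geometric denominator) so that the vertical axis is the unique supporting line at $w_i(\infty)$, and then combine Theorem \ref{thm1} with Proposition \ref{symmetric} to exclude ground states elsewhere on the edge. Your write-up merely makes explicit the hypothesis checks and the final case split (relative-interior points versus endpoints) that the paper leaves to the reader.
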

\begin{proof}
One may check by direct computations that the function $\ell(x)$ and the points $a,\,x_i(k)$
 satisfy all the conditions of the Proposition \ref{prop1}. Hence, $\R(\Phi)=\overline{\rm Conv}\{w(0),w_i(j): j\ge\lambda,\,i=1,2\}$ and the segment of the vertical axis between $w_1(\infty)$ and $w_2(\infty)$ is the face of $\R(\Phi)$.

 Next we show that the slopes of the lines passing through $w_1(\infty)$ and $w_1(j)$ increase without bound as $j\to\infty$. This implies that the vertical axis is the only supporting line at $w_1(\infty)$ . We compute the slope
 \begin{equation*}
   \frac{{\rm pr}_2(w_1(j)-w_1(\infty))}{{\rm pr}_1(w_1(j)-w_1(\infty))} =\frac{\sum\limits_{k=1}^{j-3}\frac{1}{10k-7}-\frac{3}{130}}{\sum\limits_{k=1}^{j-3}e^{7-10k}+3e^{-2}}
 \end{equation*}
 and see that the series in the numerator diverges whereas the series in the denominator converges. Therefore $w_1(\infty)$ is an extreme non-exposed point on the boundary of $\R(\Phi)$. By symmetry, $w_2(\infty)$ is an extreme non-exposed point as well. The statement now follows from  Theorem \ref{thm1} and Proposition \ref{symmetric}.
\end{proof}

\begin{remark}
Recall that $\mu\in \cM$ is called a maximizing measure of a potential $\varphi$ if $\int \varphi d \mu\geq \int \varphi  d\nu$ for all $\nu\in \cM$ (see \cite{Je2} for further information about maximizing measures). It follows immediately from the definitions of $\R(\Phi), H_\alpha(\Phi)$ and $F_\alpha(\Phi)$ that every invariant measure $\mu$ with $\rv(\mu)\in F_\alpha(\Phi)$ is a maximizing measure for the potential $\alpha\cdot \Phi$. Applying this observation to the example in Proposition \ref{extreme_pts} shows that for all $w\in [w_2(\infty),0)\cup(0,w_1(\infty)]$ all measures in  $\cM_\phi(w)$ are maximizing measures of the potential $\alpha\cdot \Phi$ but none of these measures is a ground state.
\end{remark}

Next, we consider the case when  points $w_1(\infty)$ and $w_2(\infty)$ are both at the origin. We show that there exists a zero temperature measure in the direction $\alpha=(-1,0)$ which is non-ergodic. Choosing the functions $\ell_1$ and $\ell_2$ in a similar way as in Proposition \ref{extreme_pts} we obtain that the vertical axis is the only supporting line at the origin.  This provides an example of a non-ergodic zero temperature measure at a smooth exposed point, which  was promised in Section \ref{exposed_pts}.

\begin{proposition}\label{non-ergodic}
Let $X,f$ and $\Phi$ be as in Example \ref{ex1}. Consider $\ell(x)=-1/\ln(x),\,\text{for}\,\, x>0;\quad \ell(0)=0$. Let $\ell_1(x)=\ell(x)$, $\ell_2(x)=-\ell(x)$, $x_1(k)=x_2(k)=e^{7-10k}$ for $k\in\bN$, $a=e^{-2}$ and $\lambda=3$. Then the origin is a smooth exposed point on $\partial\R(\Phi)$ and for $\alpha=(-1,0)$ there is a non-ergodic measure $\mu$ such that $GS(-1,0)=\{\mu\}$.
\end{proposition}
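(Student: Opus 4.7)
The argument splits into two parts: identifying the origin as a smooth exposed point of $\R(\Phi)$, and pinning down $GS(-1,0)$ explicitly. For the geometric part I would first check the hypotheses of Proposition \ref{prop1}: the bound $\sum_{k=1}^\infty x_1(k)=e^{-3}/(1-e^{-10})<e^{-2}=a$ is immediate, and for $i=1$ the inequality $\ell_1(x_1(1))>4\ell_1(x_1(2))$ reduces to $1/3>4/13$, which holds (the $i=2$ case is identical by symmetry). Hence $\R(\Phi)=\overline{\rm Conv}\{w(0),w_i(j):j\geq 3,\,i=1,2\}$, and because $\ell(0)=0$ the accumulation points $w_1(\infty)$ and $w_2(\infty)$ both coincide with the origin. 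Since $\phi_1\geq 0$ with $\phi_1(\xi)=0$ exactly when $\xi\in Y_1(\infty)\cup Y_2(\infty)$, and $\phi_2$ also vanishes on that set (as $\ell_i(0)=0$), the face $F_{(-1,0)}(\Phi)$ equals $\{(0,0)\}$; in particular the origin is exposed. For smoothness, the slope of the segment from the origin to $w_1(j)$ is, by \eqref{wj},
\[
\frac{{\rm pr}_2(w_1(j))}{{\rm pr}_1(w_1(j))}=\frac{\sum_{k=1}^{j-3}1/(10k-7)}{3e^{-2}+\sum_{k=1}^{j-3}e^{7-10k}},
\]
which tends to $+\infty$ as $j\to\infty$ because the numerator is a divergent harmonic-type sum while the denominator converges; by the symmetry $\ell_2=-\ell_1$, the analogous slopes through $w_2(j)$ tend to $-\infty$. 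Hence any non-vertical line through the origin is eventually cut by some vertex, leaving the vertical axis as the unique supporting hyperplane.

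For the ground state analysis I would first observe that $\phi_1$ is Lipschitz. A brief case analysis of the definition of $\Phi$ shows that if $\xi,\xi'$ agree on the first $n$ coordinates with $n\geq \lambda$, then $|\phi_1(\xi)-\phi_1(\xi')|\leq x_1(n-\lambda+1)=e^{27-10n}$, which is bounded by a multiple of $d(\xi,\xi')=2^{-n}$ since $10>\ln 2$. The full-shift thermodynamic formalism then supplies a unique equilibrium state $\mu_t$ for each $-t\phi_1$, and the symmetry argument of Proposition \ref{symmetric} applies verbatim: the involution $T$ that swaps $S_1\leftrightarrow S_2$ commutes with $f$ and fixes $\phi_1$, so $T\mu_t=\mu_t$. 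Every $\mu\in GS(-1,0)$ is hence $T$-invariant, and by Theorem \ref{thm1} it satisfies $\rv(\mu)=(0,0)$ and $h_\mu=\cH((0,0))$.

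Measures with $\rv=(0,0)$ are precisely those carried by $Y_1(\infty)\cup Y_2(\infty)$. Decomposing $\mu=a_1\tilde\mu_1+a_2\tilde\mu_2$ with $\tilde\mu_i$ invariant on $Y_i(\infty)$ and using affinity of entropy together with $h_{\rm top}(f|_{Y_i(\infty)})=\log 2$, one has $h_\mu\leq\log 2$, with equality forcing every $\tilde\mu_i$ with $a_i>0$ to be the $(\tfrac12,\tfrac12)$-Bernoulli measure $\mu_{B,i}$. Since $T\mu_{B,1}=\mu_{B,2}$, the identity $T\mu=\mu$ rules out $a_1=0$ or $a_2=0$ and forces $a_1=a_2=\tfrac12$. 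Thus $GS(-1,0)=\{\tfrac12\mu_{B,1}+\tfrac12\mu_{B,2}\}$, which is non-ergodic as a nontrivial convex combination of distinct ergodic measures on disjoint invariant sets. The main technical obstacle is the smoothness verification, which requires matching the Proposition \ref{prop1} description of $\partial\R(\Phi)$ near the origin with the precise divergence/convergence rates of the two series above; the Lipschitz check for $\phi_1$ is routine but indispensable, because uniqueness of equilibrium states is what converts the symmetry of the potential into pointwise symmetry of the $\mu_t$, and thereby into uniqueness of the ground state.
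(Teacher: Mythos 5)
Your proof is correct and follows essentially the same route as the paper's: exposedness from the non-negativity of $\phi_1$ and the identification of its zero set with $Y_1(\infty)\cup Y_2(\infty)$, smoothness from the divergence of the slopes to the vertices $w_i(j)$ (the computation of Proposition \ref{extreme_pts}), and uniqueness and non-ergodicity of the ground state from the involution $T$ of Proposition \ref{symmetric} combined with entropy maximization on the two embedded full shifts. You additionally make explicit two points the paper leaves implicit — the verification of the hypotheses of Proposition \ref{prop1} and the Lipschitz estimate for $\phi_1$ that underwrites uniqueness of the equilibrium states — which is a welcome strengthening rather than a deviation.
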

\begin{proof} As an immediate consequence of Proposition \ref{symmetric} we obtain that the origin is an exposed point of the rotation set of $\Phi$ and the vertical axis is the only supporting hyperplane at the origin.  Therefore, if $\mu\in GS(\Phi)$ and $\rv(\mu)=(0,0)$ then $\mu\in GS(\alpha)$, where $\alpha=(-1,0)$. Let $\mu_\alpha$ be such a measure. Since $\Phi$ has non-negative values and $\int \Phi\, d\mu_{\alpha}=(0,0)$, we obtain that the preimage of the origin under $\Phi$ is a set of full measure $\mu_\alpha $. On the other hand, $\Phi(\xi)=(0,0)$ if and only if either $\xi_k\in S_1$ for all $k\in\bN$ or $\xi_k\in S_2$ for all $k\in\bN$. Therefore, the support of measure $\mu_\alpha$ is contained in the union of two full shifts with alphabets $S_1$ and $S_2$. Denote the unique ergodic entropy maximizing measures for these shifts by $\mu_1$ and $\mu_2$ respectively. Since $\mu_\alpha$ also maximizes entropy at the origin, its ergodic decomposition must be a convex combination of $\mu_1$ and $\mu_2$, i.e. $\mu_\alpha=s\mu_1+(1-s)\mu_2$ for some $s\in [0,\,1]$.
Applying the operator $T$ from Proposition \ref{symmetric} we obtain
\begin{align*}
s\mu_1+(1-s)\mu_2&=\mu_\alpha\\
  &= T\circ\mu_\alpha\\
   & = sT\circ\mu_1+(1-s)T\circ\mu_2\\
  & = s\mu_2+(1-s)\mu_1
\end{align*}
and hence $s=\frac12$. It follows that $\mu_\alpha=\frac12\mu_1+\frac12\mu_2$ is the unique ground state in the direction $\alpha$, which is not ergodic.
 \end{proof}

Finally we show that  the set $\{\rv(\mu):\mu\in GS(\alpha)\}$ does not necessarily have to be a singleton.
To obtain such an example we consider
a shift map and construct an appropriate   2-dimensional potential. As a consequence of Theorem \ref{thm1} (b),
we obtain a set of ground states associated with one direction vector whose rotation vectors form a non-trivial  line segment.

\begin{theorem}\label{thm5}
Let $f:X\to X$ be the one-sided full shift  over the alphabet $\{0,1\}$. Then there exists a Lipschitz continuous potential $\Phi:\Sigma_2\to\bR^2$ and a direction vector $\alpha$ such that $\rv(GS(\alpha))$ is a non-trivial compact line segment.
\end{theorem}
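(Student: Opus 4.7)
The plan is to lift the Chazottes–Hochman example to a two-dimensional potential whose rotation set has a line-segment face on which the two distinct ergodic ground states have different rotation vectors, and then to invoke the connectivity conclusion of Theorem \ref{thm1}(b) to fill in the entire segment. More specifically, let $\varphi : X \to \bR$ be the Lipschitz continuous potential constructed in \cite{ChH} on the one-sided full shift $X = \{0,1\}^{\bN}$ for which $GS(\varphi)$ contains two distinct ergodic invariant measures $\mu_1$ and $\mu_2$. Since $\mu_1 \neq \mu_2$, they must disagree on some finite cylinder set, so there exists a cylinder indicator $\psi = \1_{[i_1 \ldots i_k]}$ with $\int \psi\,d\mu_1 \neq \int \psi\,d\mu_2$. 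Cylinder indicators are locally constant and hence Lipschitz continuous in the standard shift metric. Define $\Phi = (\varphi, \psi) : X \to \bR^2$ and $\alpha = (1, 0) \in S^1$, so that $\alpha\cdot \Phi = \varphi$ and therefore $GS(\alpha) = GS(\varphi)$; in particular, $\{\mu_1, \mu_2\} \subset GS(\alpha)$.

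The next step is to identify the face in which both of these ground states live. Every ground state is a maximizing measure for its potential, so $\int \varphi\,d\mu_1 = \int \varphi\,d\mu_2 = c$, where $c = \max_{\nu \in \cM} \int \varphi\,d\nu$. Consequently the first coordinates of $\rv(\mu_1)$ and $\rv(\mu_2)$ both equal $c$, so both rotation vectors lie on the supporting hyperplane $H_\alpha(\Phi) = \{u \in \bR^2 : u_1 = c\}$ and hence in the face $F_\alpha(\Phi) = \R(\Phi) \cap H_\alpha(\Phi)$. This face is a compact convex subset of a vertical line in $\bR^2$, hence a (possibly degenerate) closed line segment, and by the construction of $\psi$ it contains the two distinct points $\rv(\mu_1) \neq \rv(\mu_2)$.

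To finish, I would invoke Theorem \ref{thm1}(b). Since each $t\alpha\cdot \Phi = t\varphi$ is Lipschitz on the one-sided full shift, the Ruelle–Bowen–Sinai theory provides a unique equilibrium state $\mu_t$ for every $t \geq 0$, and the map $t \mapsto \rv(\mu_t)$ is in fact real-analytic (see the remark following Theorem \ref{thm1}). Theorem \ref{thm1}(b) therefore guarantees that $\rv(GS(\alpha))$ is a compact connected subset of the line segment $F_\alpha(\Phi)$ containing at least the two distinct points $\rv(\mu_1)$ and $\rv(\mu_2)$. A compact connected subset of a line that contains two distinct points is a non-trivial compact line segment, which is precisely the desired conclusion.

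The only substantive issue is producing a Lipschitz second coordinate $\psi$ that separates the two ergodic Chazottes–Hochman measures, but this is not a genuine obstacle: distinct Borel probability measures on a compact metrizable space are always separated by continuous functions, and in the symbolic setting one may simply take $\psi$ to be the indicator of a finite cylinder, which is automatically Lipschitz. Everything else is an immediate application of results already established in the paper, so the proof reduces to carefully assembling these observations.
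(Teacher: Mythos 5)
Your proof is correct and follows essentially the same route as the paper: extend the Chazottes--Hochman potential to a two-dimensional $\Phi$ whose second coordinate separates the two ergodic ground states, place both of their rotation vectors on the line-segment face cut out by the supporting hyperplane in the chosen direction, and apply Theorem \ref{thm1}(b) to conclude that $\rv(GS(\alpha))$ is a compact connected subset of that segment containing two distinct points. Your single cylinder indicator $\psi$ is a mild simplification of the paper's choice $\phi_2=c_1\mathbbm{1}_{C_1}+c_2\mathbbm{1}_{C_2}$ (which merely normalizes the two rotation vectors to $(0,\pm a)$), and your convention $\alpha=(1,0)$ with the Chazottes--Hochman potential itself, rather than $\alpha=(-1,0)$ with $\phi_1=\mathrm{dist}(\cdot,Y)$, changes nothing of substance.
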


\begin{proof}
We start with the definition of $\Phi=(\phi_1,\phi_2):X\to\bR^2$. Let $Y\subset \Sigma_2$ the subshift in the example of Chazottes and Hochman \cite{ChH} (see Section 3). We define $\phi_1(\xi)=dist(\xi,Y)$. It follows that $\phi_1$ is Lipschitz continuous and that $GS(\phi_1)$  contains two ergodic ground states $\mu_1$ and $\mu_2$ that are both  supported on $Y$.  Let $b>0$ such that $\R(\phi_1)=[0,b]$. Let $a>0$.  Since $\mu_1\not=\mu_2$ there exist disjoint cylinders $C_1, C_2\subset X$ with $\mu_1(C_1)>\mu_2(C_1)\geq 0$ and  $\mu_2(C_2)>\mu_1(C_2)\geq 0$.
Set $$c_1= \frac{a(\mu_1(C_2)+\mu_2(C_2))}{ \mu_1(C_1)\mu_2(C_2)-\mu_2(C_1)\mu_1(C_2)},\,\,c_2=\frac{-a (\mu_1(C_1)+\mu_2(C_1))}{\mu_1(C_1)\mu_2(C_2)-\mu_2(C_1)\mu_1(C_2)}.$$
We define  $\phi_2=  c_1  \mathbbm{1}_{C_1} +  c_2 \mathbbm{1}_{C_2}$ where $ \mathbbm{1}_C$ denotes the characteristic function of a set $C$. It follows that $\Phi=(\phi_1,\phi_2)$ is Lipschitz continuous, $\int \phi_2 d\mu_1=a$, and $\int \phi_2 d\mu_2=-a$.

We now consider the rotation set $\R(\Phi)$. Set $w_1=(0,a)$ and $w_2=(0,-a)$.  The fact that $\int \phi_1 d\mu_i=0$ for $ i=1,2$ yields $w_1,w_2\in \R(\Phi)$. Moreover, since $[0,b]=\R(\phi_1)$ there exists $w_3\in \R(\Phi)$ with ${\rm pr}_1(w_3)=b$. Hence, $\R(\Phi)$ has non-empty interior. Using that $\phi_1\geq 0$ and $w_1,w_2\in \R(\Phi)$ we conclude that the $y$-axis is a supporting hyperplane of $\R(\Phi)$.
Let $\alpha=(-1,0)$ denote the corresponding direction vector. We obtain that $\mu_1,\mu_2\in GS(\alpha)$. It now follows from Theorem \ref{thm1} (b)
that $\rv(GS(\alpha))$ is a compact line segment contained  in the $y$-axis with $w_1,w_2\in \rv(GS(\alpha))$. Therefore, the line segment with end points $w_1$ and $w_2$ is contained in $\rv(GS(\alpha))$.
\end{proof}

\begin{remark}
We note that the potential $\Phi$ in Theorem  \ref{thm5} has the feature that the curve $ t \mapsto rv(\mu_{t\alpha\cdot \Phi})$ is analytic \cite{KW1} and has infinite length.
\end{remark}


\begin{thebibliography}{99}

\bibitem{B}T. Bousch, \emph{Le poisson n'a pas d'aretes}, Annales de l'Institut Henri Poincar\'e (probabilit\'es et statistiques) \textbf{36} (2000),  489-508.

\bibitem{Br} J. Br\'emont, {\it Gibbs measures at temperature zero}, Nonlinearity {\bf 16} (2003), 419--426.

\bibitem{ChH} J. R. Chazottes and M. Hochman, {\it On the Zero-Temperature Limit of Gibbs States}, Communications in Mathematical Physics {\bf 297} (2010), no. 1, 265--281.



\bibitem{CT1}V. Climenhaga and D. Thompson, {\it Equilibrium states beyond specification and the Bowen property}, Journal of the London Mathematical Society {\bf 87} (2013), 401-427.

\bibitem{CT2}V. Climenhaga and D. Thompson, {\it Intrinsic ergodicity beyond specification: β-shifts, S-gap shifts, and their factors},
Israel Journal of Mathematics, {\bf 192} (2012), 785-817.

\bibitem{CT3}V. Climenhaga and D. Thompson,{\it Unique equilibrium states for flows and homeomorphisms with non-uniform structure}, 	arXiv:1505.03803

\bibitem{CFT}V. Climenhaga, T. Fisher and D. Thompson, {\it Unique equilibrium states for the robustly transitive diffeomorphisms of Mañé and Bonatti-Viana}, arXiv:1505.06371

\bibitem{CP}V. Climenhaga and Ya. Pesin, {\it Building thermodynamics for non-uniformly hyperbolic maps}, preprint.

\bibitem{C}G. Contreras, {\it Ground States are generically a Periodic Orbit}, arXiv:1307.0559

\bibitem{CLT}G. Contreras, A.O. Lopes and P. Thieullen,  {\it Lyapunov minimizing measures for expanding maps of the circle}, Ergodic Theory and Dynamical Systems {\bf 21} (2001), 1379–1409.



\bibitem{GM}W. Geller and M. Misiurewicz, \emph{Rotation and entropy}, Transactions of the American Mathematical Society \textbf{351} (1999), 2927-2948.

\bibitem{Gr} B. Gr\"unbaum, \emph{Convex Polytopes}, Pure and Applied Mathematics vol. XVI, Interscience,
1967.

\bibitem{K}T. Kempton,
{\it Zero temperature limits of Gibbs equilibrium states for countable Markov shifts},
Journal of Statistical Physics {\bf 143} (2011), 795–806.

\bibitem{GKLM}P. Giulietti, B. Kloeckner, A. O. Lopes and D. Marcon, {\it The calculus of thermodynamic formalism}, preprint.

\bibitem{Je} O.~Jenkinson, \emph{Rotation, entropy, and equilibrium states}, Transactions of the American Mathematical Society \textbf{353} (2001), 3713--3739.


\bibitem{Je2}O.~Jenkinson {\it Ergodic optimization}, Discrete and Continuous Dynamical Systems {\bf 15} (2006), 197-224.

\bibitem{Je4} O, Jenkinson, D. Mauldin and M. Urbanski, {\it Zero temperature limits of Gibbs-equilibrium states for countable alphabet subshifts of finite type}, Journal of Statistical Physics {\bf 119} (2005), 765--776.


\bibitem{KW1} T. Kucherenko and C. Wolf, {\it The geometry and entropy of rotation sets}, Israel Journal Mathematics {\bf 1999} (2014), 791--829.

\bibitem{KW2}T. Kucherenko and C. Wolf, \emph{Localized Pressure and equilibrium states}, Journal of Statistical Physics {\bf 160} (2015), 1529--1544.

\bibitem{KW3} T. Kucherenko and C. Wolf,  {\it Entropy and rotation sets: A toymodel approach}, Communications in Contemporary Mathematics, published online: December 11, 2015, 23 pages.



\bibitem{Le} R. Leplaideur, {\it A dynamical proof for the convergence of Gibbs measures at temperature zero}, Nonlinearity {\bf 18} (2005), 2847--2880.

\bibitem{MZ}M. Misiurewicz and K. Ziemian, \emph{Rotation sets and ergodic measures for torus homeomorphisms}, Fundamenta Mathematicae \textbf{137} (1991), 45--52.


\bibitem{Mor} I. Morris, {\it Entropy for zero-temperrature limits of Gibbs-equilibrium states for countable-alphabet subshifts of finite type}, Journal of Statistical Physics {\bf 126} (2007), 315--324.

\bibitem{vEFS}A. Van Enter, R. Fernández and A. Sokal,
{\it Regularity properties and pathologies of position-space renormalization-group transformations: scope and limitations of Gibbsian theory}, Journal of Statistical Physics {\bf 72} (1993), 879–1167.


\bibitem{Wal:81} P.~Walters, \emph{An introduction to ergodic theory},
  Graduate Texts in Mathematics 79, Springer, 1981.

  \bibitem{Z}K. Ziemian, \emph{Rotation sets for subshifts of finite type}, Fundamenta Mathematicae \textbf{146} (1995), 189-201.

\end{thebibliography}
\end{document}